\numberwithin{equation}{section}
\theoremstyle{plain}
\newtheorem{theorem}{Theorem}[section]
\newtheorem{corollary}[theorem]{Corollary}
\newtheorem{proposition}[theorem]{Proposition}
\newtheorem{lemma}[theorem]{Lemma}
\theoremstyle{remark}
\newtheorem{remark}[theorem]{Remark}
\newtheorem{example}[theorem]{Example}
\theoremstyle{definition}
\newtheorem{definition}[theorem]{Definition}
\newcommand{\HH}{\mathcal{H}}
\newcommand{\R}{\mathbb{R}}
\newcommand{\C}{\mathbb{C}}
\newcommand{\Z}{\mathbb{Z}}
\newcommand{\N}{\mathbb{N}}
\newcommand{\hhh}{\mathtt{h}}
\newcommand{\iii}{\mathtt{i}}
\newcommand{\jjj}{\mathtt{j}}
\newcommand{\kkk}{\mathtt{k}}
\newcommand{\eps}{\varepsilon}
\newcommand{\fii}{\varphi}
\newcommand{\roo}{\varrho}
\newcommand{\ys}{\overline{s}}
\newcommand{\as}{\underline{s}}
\DeclareMathOperator{\dimm}{dim_M}
\DeclareMathOperator{\dimh}{dim_H}
\DeclareMathOperator{\dist}{dist}
\DeclareMathOperator{\diam}{diam}
\DeclareMathOperator*{\Lim}{Lim}
\DeclareMathOperator{\proj}{proj}
\DeclareMathOperator{\conv}{conv}
\DeclareMathOperator{\por}{por}
\newenvironment{labeledlist}[1]
{ 
  
  \begin{enumerate} }
{ \end{enumerate} }
\begin{document}

\title{Separation conditions on controlled Moran constructions}

\author{Antti K\"aenm\"aki \and Markku Vilppolainen} 

\address{Department of Mathematics and Statistics \\
         P.O. Box 35 (MaD) \\
         FI-40014 University of Jyv\"askyl\"a \\
         Finland}
\email{antakae@maths.jyu.fi}
\email{vilppola@maths.jyu.fi}

\thanks{AK acknowledges the support of the Academy of Finland (project \#114821).}
\subjclass[2000]{Primary 28A80; Secondary 37C45.}
\keywords{Moran construction, iterated function system, limit set,
  invariant set, separation condition, open set condition, Hausdorff
  measure, Hausdorff dimension.}
\date{\today}

\begin{abstract}
  It is well known that the open set condition and the
  positivity of the $t$-dimensional Hausdorff measure are equivalent
  on self-similar sets, where $t$ is the zero of the topological
  pressure. We prove an analogous result for a class of Moran
  constructions and we study different kinds of Moran constructions
  with this respect.
\end{abstract}

\maketitle

\section{Introduction}

%The origin of fractal mathematics goes back to the early works of
%Cantor \cite{Cantor1883}. He showed that a nonempty perfect subset
%of the real line is uncountable. At that time, fractal type
%behavior were seen in many examples, which, however, were considered
%to be only pathological counterexamples for some property. For example,
%the Weierstrass function is an example of a continuous and
%nondifferentiable function. The later development of geometric
%measure theory gave necessary tools for studying these kinds of objects.
%A nice overview for the beginning of fractal mathematics can be
%found in the book of Edgar \cite{Edgar1993}.
%
%Mainly because of Mandelbrot's intuition \cite{Mandelbrot1982},
%fractals started to be seen as models of real world phenomena
%instead of pathological examples. Although there is no generally
%accepted definition for the term ``fractal'', the fundamental idea
%behind this notion is
%self-similarity: small pieces of a set appear to be similar to the
%whole set. A mathematical class of self-similar sets was introduced by
%Hutchinson \cite{Hutchinson1981}. A mapping $\fii \colon \R^d \to
%\R^d$ is called a \emph{similitude
%mapping} if there is $s > 0$ such that $|\fii(x) - \fii(y)| = s|x-y|$
%whenever $x,y \in \R^d$.
The widely studied class of self-similar sets was introduced by
Hutchinson \cite{Hutchinson1981}. A mapping $\fii \colon \R^d \to
\R^d$ is called a \emph{similitude
mapping} if there is $s > 0$ such that $|\fii(x) - \fii(y)| = s|x-y|$
whenever $x,y \in \R^d$.
If the similitude mappings $\fii_1,\ldots,\fii_k$ are
\emph{contractive}, that is, all the Lipschitz constants are strictly
less than one, then a nonempty compact set $E \subset \R^d$ is
called \emph{self-similar} provided that it satisfies
\begin{equation*}
  E = \fii_1(E) \cup \cdots \cup \fii_k(E).
\end{equation*}
From this, one can easily see that the set $E$ consists of smaller and
smaller pieces which are geometrically similar to $E$. However, the
self-similar structure is hard to recognize if these pieces overlap
too much. Hutchinson \cite{Hutchinson1981} used a separation
condition which guarantees that we can distinguish the pieces. The idea goes
back to Moran \cite{Moran1946} who studied similar constructions
but without mappings. In the \emph{open set condition},
it is required that there exists an open set $V$ such that all the images
$\fii_i(V)$ are pairwise disjoint and contained in $V$. Lalley
\cite{Lalley1988} used a stronger version of the open set
condition. In the \emph{strong open set condition},
it is required that the open set $V$ above can be chosen such that $V
\cap E \ne \emptyset$.

Assuming the open set condition, Hutchinson \cite[\S
5.3]{Hutchinson1981} proved that the $t$-dimensional Hausdorff measure
$\HH^t$ of $E$ is positive, where $t$ is the zero of the so-called
topological pressure. See also Moran \cite[Theorem III]{Moran1946} for
the corresponding theorem for the Moran constructions. Schief
\cite[Theorem 2.1]{Schief1994} showed, extending the ideas of Bandt
and Graf \cite{BandtGraf1992}, that the open set condition is not only
sufficient but also a necessary condition for the positivity of the
Hausdorff measure. In fact, he proved that $\HH^t(E)>0$ implies the
strong open set condition. Later, Peres, Rams, Simon, and Solomyak
\cite[Theorem 1.1]{PeresRamsSimonSolomyak2001} showed
that this equivalence also holds for self-conformal sets. See also
Fan and Lau \cite{FanLau1999}, Lau, Rao and Ye \cite{LauRaoYe2001},
and Ye \cite{Ye2002}.
Observe that in these results, it is essential to have finite number of
mappings. Szarek and W\c{e}drychowicz \cite{SzarekWedrychowicz2005}
have shown that in the infinite case the open set condition does not
necessarily imply the strong open set condition.

The main theme in this article is to study the relationship between
separation conditions and the Hausdorff measure on limit sets of
Moran constructions. More precisely, we study what can be said about
Schief's result in this setting. Since the open set condition requires
the use of mappings, we introduce a representative form for it to
be used on Moran constructions. We also study invariant sets of
certain iterated function systems without assuming
conformality. We generalize many classical results into these settings.

The article is organized as follows. In Section
\ref{sec:semiconformal}, we introduce the concept of semiconformal
measure on the symbol space and prove the existence of such measures.
The projection of a semiconformal measure onto the limit set of a
Moran construction gives us valuable information about the limit set
provided that the 
construction is properly controlled and the pieces used in the
construction are appropriately separated. We introduce the definitions
of the controlled Moran construction (CMC) and suitable separation
conditions in Section \ref{sec:CMC}. We also specify a class of CMC's, the so
called tractable CMC's, for which a natural separation condition is
equivalent to the positivity of $\HH^t(E)$, where $E$ is the limit set
and $t$ the zero of the topological pressure. In Section
\ref{sec:congruent}, we
consider a subclass of tractable CMC's, which we call semiconformal
CMC's. We show that this class has properties that allow us to
consider it as a natural replacement of the class of conformal
iterated function systems into the setting of Moran
constructions. Without the assumption of conformality, we study
separation conditions on iterated function systems in
Section \ref{sec:congruent_IFS}. The last section is
devoted to examples.

\section{Semiconformal measure} \label{sec:semiconformal}

In this section, we work only in the symbol space. We begin by fixing
some notation to be used throughout this article. As
usual, let $I$ be a finite set with at least two elements.
Put $I^* = \bigcup_{n=1}^\infty I^n$ and $I^\infty = I^\N$. Now for each
$\iii \in I^*$, there is $n \in \N$ such that $\iii = (i_1,\ldots,i_n)
\in I^n$. We call this $n$ the \emph{length} of $\iii$ and we
denote $|\iii|=n$. The length of elements in $I^\infty$ is
infinity. Moreover, if $\iii \in I^*$ and $\jjj \in I^* \cup I^\infty$,
then with the notation $\iii\jjj$, we mean the element obtained by
juxtaposing the terms of $\iii$ and $\jjj$. For $\iii \in I^*$ and $A
\subset I^\infty$, we define $[\iii;A] = \{ \iii\jjj : \jjj \in A \}$
and we call the set $[\iii] = [\iii;I^\infty]$ a \emph{cylinder set} of level
$|\iii|$. If $\jjj \in I^* \cup I^\infty$ and $1\le n < |\jjj|$, we
define $\jjj|_n$ to be the unique element $\iii \in I^n$ for which
$\jjj \in [\iii]$. We also denote $\iii^- = \iii|_{|\iii|-1}$.
%Furthermore, we say that elements $\iii,\jjj \in I^*$ are
%\emph{incomparable} if $[\iii] \cap [\jjj] = \emptyset$.
With the
notation $\iii \bot \jjj$, we mean that the elements $\iii,\jjj \in
I^*$ are \emph{incomparable}, that is, $[\iii] \cap [\jjj] =
\emptyset$. We call a set $A \subset I^*$
incomparable if all of its elements are mutually incomparable.
Finally, with the notation $\iii \land \jjj$, we mean the common
beginning of $\iii \in I^*$ and $\jjj \in I^*$, that is, $\iii \land
\jjj = \iii|_n = \jjj|_n$, where $n = \min\{ k-1 : \iii|_k \ne \jjj|_n
\}$.

Defining
\begin{equation*}
  |\iii - \jjj| =
  \begin{cases}
    2^{-|\iii \land \jjj|}, \quad &\iii \ne \jjj \\
    0, &\iii = \jjj
  \end{cases}
\end{equation*}
for each $\iii,\jjj \in I^\infty$, the couple $(I^\infty,|\cdot|)$ is
a compact metric space. We call $(I^\infty,|\cdot|)$ a \emph{symbol
  space} and an element $\iii \in I^\infty$ a \emph{symbol}. If there
is no danger of misunderstanding, let us call also an element $\iii
\in I^*$ a symbol. Define the \emph{left
shift} $\sigma \colon I^\infty \to I^\infty$ by setting
\begin{equation*}
 \sigma(i_1,i_2,\ldots) = (i_2,i_3,\ldots).
\end{equation*}
With the notation $\sigma(i_1,\ldots,i_n)$, we mean the symbol
$(i_2,\ldots,i_n) \in I^{n-1}$. Observe that to be precise in our
definitions, we need to work with ``empty symbols'', that is, symbols
with zero length. However, this is left to the reader.

We now present
sufficient conditions for the existence of the so-called semiconformal
measure. Our presentation here has common points with \cite{Bowen1975}
and \cite[\S 2.1.2]{Barreira1996}.
Suppose the collection $\{ s_\iii > 0 : \iii \in I^* \}$
satisfies the following two assumptions:
\begin{labeledlist}{S}
  \item There exists a constant $D \ge 1$ such that
  \begin{equation*}
  D^{-1}s_\iii s_\jjj \le s_{\iii\jjj} \le Ds_\iii s_\jjj
  \end{equation*}
  whenever $\iii,\jjj \in I^*$. \label{S1}
  \item $\max_{\iii \in I^n} s_\iii \to 0$ as $n \to \infty$. \label{S2}
\end{labeledlist}
Given $t \ge 0$, we define the \emph{topological pressure} to be
\begin{equation*}
  P(t) = \lim_{n \to \infty} \tfrac{1}{n} \log \sum_{\iii \in I^n} s_\iii^t.
\end{equation*}
The limit above exists by the standard theory of subadditive sequences
since
\begin{equation*}
  \sum_{\iii \in I^{n+m}} s_\iii^t \le D^t \sum_{\iii \in I^{n+m}}
  s_{\iii|_n}^t s_{\sigma^n(\iii)}^t = D^t \sum_{\iii \in I^n} s_\iii^t \sum_{\jjj \in
  I^m} s_\jjj^t
\end{equation*}
using \ref{S1}. 

As a function, $P \colon [0,\infty) \to \R$ is convex: Let $0 \le t_1
  \le t_2$ and $\lambda \in (0,1)$. Now H\"older's inequality implies
\begin{equation*}
  \sum_{\iii \in I^n} s_\iii^{\lambda t_1 + (1-\lambda)t_2} =
  \sum_{\iii \in I^n} (s_\iii^{t_1})^\lambda(s_\iii^{t_2})^{1-\lambda}
  \le \biggl( \sum_{\iii \in I^n} s_\iii^{t_1} \biggr)^\lambda \biggl(
  \sum_{\iii \in I^n} s_\iii^{t_2} \biggr)^{1-\lambda}
\end{equation*}
from which the claim follows. According to \ref{S2}, we may choose $n
\in \N$ so that $\max_{\iii \in I^n} s_\iii < D^{-1}$. Then,
using \ref{S1}, we have
\begin{equation} \label{eq:arvio_lemmalle}
  P(t) \le \lim_{k \to \infty} \tfrac{1}{kn} \log \biggl( D^t
  \sum_{\iii \in I^n} s_\iii^t \biggl)^k 
  \le \tfrac1n \log \bigl(D\max_{\iii \in I^n} s_\iii\bigr)^t +
  \tfrac1n \log\#I^n.
\end{equation}
Hence $P(t) \to -\infty$ as $t \to \infty$ and,
noting that $P(0) = \log\#I > 0$, there exists a unique $t \ge 0$
for which $P(t)=0$.

\begin{lemma} \label{thm:summa_arvio}
  Suppose $t \ge 0$. Then
  \begin{equation*}
    D^{-t} e^{nP(t)} \le \sum_{\iii\in I^n} s_\iii^t \le D^t e^{nP(t)}
  \end{equation*}
  whenever $n \in \N$.
\end{lemma}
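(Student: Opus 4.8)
The plan is to recognize the quantity $a_n := \sum_{\iii \in I^n} s_\iii^t$ as being, up to the universal factor $D^{\pm t}$, simultaneously submultiplicative and supermultiplicative, and then to read off the two bounds from the standard theory of subadditive (and superadditive) sequences applied in each direction. First I would note that juxtaposition $(\iii,\jjj) \mapsto \iii\jjj$ is a bijection between $I^n \times I^m$ and $I^{n+m}$, so that $a_{n+m} = \sum_{\iii \in I^n}\sum_{\jjj \in I^m} s_{\iii\jjj}^t$. Raising the two-sided estimate in \ref{S1} to the power $t$ and summing over $\iii \in I^n$ and $\jjj \in I^m$ then gives
\[
  D^{-t} a_n a_m \le a_{n+m} \le D^t a_n a_m
\]
for all $n,m \in \N$; this is exactly the computation already carried out above to establish the existence of $P(t)$.

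Next I would renormalize so as to remove the constant $D$. Setting $b_n = D^t a_n$, the right-hand inequality becomes $b_{n+m} \le b_n b_m$, so that $(\log b_n)$ is subadditive; setting $c_n = D^{-t} a_n$, the left-hand inequality becomes $c_{n+m} \ge c_n c_m$, so that $(\log c_n)$ is superadditive. By the standard theory of such sequences, $\tfrac1n \log b_n \to \inf_m \tfrac1m \log b_m$ and $\tfrac1n \log c_n \to \sup_m \tfrac1m \log c_m$. Since $\tfrac1n \log b_n$ and $\tfrac1n \log c_n$ differ from $\tfrac1n \log a_n$ only by the term $\pm \tfrac{t}{n}\log D$, which tends to $0$, both limits coincide with $P(t)$.

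Finally, I would exploit the fact that a subadditive limit is an infimum and a superadditive limit is a supremum. This converts the purely asymptotic definition of $P(t)$ into a bound valid for every finite $n$: for each fixed $n$ we have $P(t) = \inf_m \tfrac1m \log b_m \le \tfrac1n \log b_n$ and $P(t) = \sup_m \tfrac1m \log c_m \ge \tfrac1n \log c_n$. Exponentiating yields $e^{nP(t)} \le b_n = D^t a_n$ and $e^{nP(t)} \ge c_n = D^{-t} a_n$, which rearrange to the asserted two-sided bound $D^{-t} e^{nP(t)} \le a_n \le D^t e^{nP(t)}$.

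There is no serious obstacle here; the argument is entirely routine. The only point demanding care is the bookkeeping of the direction of the constant $D^{\pm t}$—ensuring that the submultiplicative normalization $D^t a_n$ delivers the \emph{lower} bound on $a_n$ while the supermultiplicative normalization $D^{-t} a_n$ delivers the \emph{upper} bound—combined with the observation that the sub/superadditive limit equals the infimum/supremum, which is precisely what upgrades the limiting definition of $P(t)$ to an estimate holding for all $n \in \N$.
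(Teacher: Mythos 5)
Your proposal is correct and takes essentially the same route as the paper: the paper obtains the same per-$n$ inequalities $P(t) \ge \tfrac1n \log\bigl(D^{-t}\sum_{\iii \in I^n} s_\iii^t\bigr)$ and $P(t) \le \tfrac1n \log\bigl(D^{t}\sum_{\iii \in I^n} s_\iii^t\bigr)$ by passing to the subsequence $kn$ and letting $k \to \infty$ (using \eqref{eq:arvio_lemmalle} for one direction), which is precisely the renormalized sub/supermultiplicative Fekete argument you invoke, with the inf/sup characterization derived by hand rather than cited. The only cosmetic difference is that you name Fekete's lemma explicitly while the paper telescopes \ref{S1} along multiples of $n$; the content and the $D^{\pm t}$ bookkeeping are identical.
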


\begin{proof}
%  By the subadditivity, we get
%  \begin{equation*}
%    P(t) = \inf_{n \in \N} \tfrac1n \biggl( \log\sum_{\iii \in I^n}
%    s_\iii^t + \log D^t \biggr)
%  \end{equation*}
%  and hence
%  \begin{equation*}
%    \sum_{\iii \in I^n} s_\iii^t \ge D^{-t}e^{nP(t)}
%  \end{equation*}
%  as $n \in \N$.
  Since
  \begin{equation*}
    P(t) \ge \lim_{k \to \infty} \tfrac{1}{kn} \log\biggl( D^{-t}
    \sum_{\iii \in I^n} s_\iii^t \biggr)^k = \log\biggl( \sum_{\iii
    \in I^n} s_\iii^t \biggr)^{1/n} + \log D^{-t/n}
  \end{equation*}
  by \ref{S2}, we get
  \begin{equation*}
    \sum_{\iii \in I^n} s_\iii^t \le D^te^{nP(t)}
  \end{equation*}
  for each $n \in \N$. The other inequality follows similarly from
  \eqref{eq:arvio_lemmalle}.
\end{proof}

Let $l^\infty$ be the linear space of all bounded sequences on the
real line. Recalling \cite[Theorem 7.2]{Munroe1953}, we say that the
\emph{Banach limit} is any mapping $L \colon l^\infty \to \R$ for which
\begin{labeledlist}{L}
  \item $L$ is linear, \label{L:linear}
%  \item $L$ is positive, that is, $L\bigl( (x_n)_{n \in \N} \bigr) \ge
%    0$ if $x_n \ge 0$ for all $n \in \N$, \label{L:positive}
  \item $L\bigl( (x_n)_{n \in \N} \bigr) = L\bigl( (x_{n+1})_{n \in
     \N} \bigr)$, \label{L:invariant}
  \item $\liminf_{n \to \infty} x_n \le L\bigl( (x_n)_{n \in \N}
    \bigr) \le \limsup_{n \to \infty} x_n$. \label{L:limitbounds}
\end{labeledlist}
To simplify the notation, we denote $\Lim_n x_n = L\bigl( (x_n)_{n \in
  \N} \bigr)$.

We call a Borel probability measure $\mu$ on $I^\infty$
\emph{$t$-semiconformal} if there exists a constant $c \ge 1$ such
that
\begin{equation*}
  c^{-1} e^{-|\iii|P(t)} s_\iii^t \le \mu([\iii]) \le c
  e^{-|\iii|P(t)} s_\iii^t
\end{equation*}
whenever $\iii \in I^*$.
A Borel probability measure $\mu$ on $I^\infty$ is called \emph{invariant} if
$\mu([\iii]) = \mu\bigl( \sigma^{-1}([\iii]) \bigr)$ for each $\iii
\in I^*$ and \emph{ergodic} if $\mu(A)=0$ or $\mu(A)=1$ for every
Borel set $A \subset I^\infty$ for which $A = \sigma^{-1}(A)$.
The use of the Banach
limit is a rather standard tool in producing an invariant measure from a
given measure, for example, see \cite[Corollary 1]{Waterman1975} and
\cite[Theorem 3.8]{MauldinUrbanski1996}.
In the following theorem, we construct a family of semiconformal
measures by applying the Banach limit to a suitable collection of
bounded set functions.

\begin{theorem} \label{thm:semikonforminen}
  For each $t \ge 0$ there exists a unique invariant
  $t$-semi\-conformal measure. Furthermore, it is ergodic.
\end{theorem}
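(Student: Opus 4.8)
The plan is to produce the measure through two applications of the Banach limit and then to read off ergodicity, and hence uniqueness, from a quasi-Bernoulli estimate that \ref{S1} and semiconformality force on cylinders. Fix $t\ge0$. For $n\in\N$ and $|\jjj|\le n$, I would spread unit mass over the level-$n$ cylinders in proportion to $s_\iii^t$ by setting
\[
  \tilde\mu_n([\jjj]) = \Bigl( \sum_{\iii\in I^n} s_\iii^t \Bigr)^{-1}
  \sum_{\kkk\in I^{n-|\jjj|}} s_{\jjj\kkk}^t ,
\]
and $\tilde\mu_n([\jjj])=0$ when $n<|\jjj|$. Splitting $s_{\jjj\kkk}^t$ with \ref{S1} and evaluating the sums with Lemma \ref{thm:summa_arvio}, these numbers satisfy $c_0^{-1}e^{-|\jjj|P(t)}s_\jjj^t\le\tilde\mu_n([\jjj])\le c_0 e^{-|\jjj|P(t)}s_\jjj^t$ with $c_0$ depending only on $D$ and $t$, uniformly in $n$. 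Each $\tilde\mu_n$ is additive on cylinders, and since sequences agreeing from some index on share the same Banach limit, the set function $\nu([\jjj])=\Lim_n\tilde\mu_n([\jjj])$ is again additive on cylinders and, by \ref{L:limitbounds}, keeps the same two-sided bounds. As cylinders are compact and open, finite additivity on the algebra they generate is automatically countable additivity, so Carath\'eodory's theorem turns $\nu$ into a $t$-semiconformal Borel probability measure.

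This $\nu$ need not be invariant, so I would average it along the shift with a second Banach limit, $\mu([\jjj])=\Lim_n\nu(\sigma^{-n}[\jjj])$. Writing $\sigma^{-n}[\jjj]=\bigsqcup_{\kkk\in I^n}[\kkk\jjj]$ and using semiconformality of $\nu$ with \ref{S1} and Lemma \ref{thm:summa_arvio}, the numbers $\nu(\sigma^{-n}[\jjj])$ remain inside the semiconformal range uniformly in $n$; hence $\mu$ is again semiconformal and extends to a measure as above. Invariance is precisely what \ref{L:invariant} provides, since
\[
  \mu(\sigma^{-1}[\jjj]) = \sum_{i\in I}\Lim_n\nu(\sigma^{-n}[i\jjj])
  = \Lim_n\nu(\sigma^{-(n+1)}[\jjj]) = \Lim_n\nu(\sigma^{-n}[\jjj]) = \mu([\jjj]).
\]

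The technical heart is a quasi-Bernoulli estimate. For cylinders $[\iii],[\jjj]$ and every $k\ge|\iii|$ the constrained coordinate blocks are disjoint, so $[\iii]\cap\sigma^{-k}[\jjj]=\bigsqcup\{[\kkk\jjj]:\kkk\in I^k,\ \kkk|_{|\iii|}=\iii\}$; feeding this decomposition into semiconformality, \ref{S1} and Lemma \ref{thm:summa_arvio} produces a constant $C\ge1$ with
\[
  C^{-1}\mu([\iii])\mu([\jjj]) \le \mu\bigl([\iii]\cap\sigma^{-k}[\jjj]\bigr)
  \le C\mu([\iii])\mu([\jjj]).
\]
To prove ergodicity of any invariant $t$-semiconformal $\mu$, I would take an invariant set $A$ and approximate $A$ and $A^c$ by clopen sets $U,W$ with $\mu(A\triangle U)<\eps$ and $\mu(A^c\triangle W)<\eps$. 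Since $A=\sigma^{-k}A$ gives $\mu(A\cap\sigma^{-k}A^c)=0$ while $\sigma$ preserves $\mu$, one gets $\mu(U\cap\sigma^{-k}W)<2\eps$; applying the lower bound to the finitely many cylinders forming $U$ and $W$ for $k$ large yields $\mu(U\cap\sigma^{-k}W)\ge C^{-1}\mu(U)\mu(W)$. Combining these and letting $\eps\to0$ forces $\mu(A)\mu(A^c)=0$, that is, $\mu(A)\in\{0,1\}$.

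Uniqueness then follows from ergodicity. Two invariant $t$-semiconformal measures $\mu_1,\mu_2$ satisfy $c^{-2}\le\mu_1([\iii])/\mu_2([\iii])\le c^2$ on all cylinders; by additivity and regularity this comparison passes to all Borel sets, so $\mu_1$ and $\mu_2$ are mutually absolutely continuous. On the other hand, two distinct ergodic invariant measures are mutually singular, since by the pointwise ergodic theorem the time averages of a continuous function separating them converge on disjoint sets of full measure. An equivalent pair cannot be singular, whence $\mu_1=\mu_2$. The main obstacle I anticipate is the ergodicity step: semiconformality yields only the two-sided comparison above and not genuine asymptotic independence, so the argument must be driven by the lower comparison bound rather than by convergence of $\mu([\iii]\cap\sigma^{-k}[\jjj])$ to $\mu([\iii])\mu([\jjj])$; keeping invariance and semiconformality intact simultaneously through the two Banach limits is the other point demanding care.
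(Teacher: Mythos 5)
Your proposal is correct, and its construction phase coincides with the paper's: up to normalization and a shift of indices your $\tilde\mu_n([\jjj])$ is the paper's $\nu_{n-|\jjj|}(\jjj)$ (Banach limits are insensitive to finite reindexing by \ref{L:invariant} and \ref{L:limitbounds}), your $\nu(\sigma^{-n}[\jjj])$ is its $\mu_n(\jjj)$, and invariance, the semiconformal bounds, and the Carath\'eodory--Hahn extension are extracted in the same way. Where you genuinely depart is in the ergodicity and uniqueness steps. The paper proves ergodicity by covering $I^\infty\setminus A$, and then $A$, by incomparable families of cylinders with controlled measure overshoot, deriving the one-sided contraction $\mu\bigl(\sigma^{-n}(A)\cap[\iii]\bigr)\le\gamma\mu([\iii])$ with $\gamma<1$ and reaching a contradiction; you instead establish the two-sided quasi-Bernoulli bound $C^{-1}\mu([\iii])\mu([\jjj])\le\mu\bigl([\iii]\cap\sigma^{-k}[\jjj]\bigr)\le C\mu([\iii])\mu([\jjj])$ for $k\ge|\iii|$ --- which indeed follows from only \ref{S1}, Lemma \ref{thm:summa_arvio}, and the cylinder estimates --- and combine its lower half with clopen approximation of an invariant set; this is the classical Gibbs-measure mixing-type argument and it goes through verbatim here, your closing remark being exactly right that the lower comparison bound alone drives it. For uniqueness, the paper dominates $\tilde\mu\le c\mu$ via the uniqueness of the Carath\'eodory--Hahn extension and then invokes \cite[Theorem 6.10]{Walters1982}, which needs ergodicity of only the constructed measure; you instead observe that your ergodicity argument applies to \emph{every} invariant $t$-semiconformal measure, so two such measures would be simultaneously mutually absolutely continuous (by the cylinder comparison, extended exactly as in the paper) and, if distinct, mutually singular by the Birkhoff-based separation of distinct ergodic measures. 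Your route costs an extra standard ingredient (singularity of distinct ergodic measures) but buys the slightly stronger conclusion that ergodicity holds for every invariant $t$-semiconformal measure, and it isolates the quasi-Bernoulli property, which is of independent interest.
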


\begin{proof}
  Define for each $\iii \in I^*$
  and $n \in \N$
  \begin{equation} \label{eq:massa}
    \nu_n(\iii) = \frac{\sum_{\jjj \in I^n}
    s_{\iii\jjj}^t}{\sum_{\jjj \in I^{|\iii|+n}} s_\jjj^t}.
  \end{equation}
  Letting $\nu(\iii) = \Lim_n \nu_n(\iii)$ and using
  using \ref{L:linear} and \ref{L:invariant}, we have
  \begin{equation} \label{eq:ysi}
  \begin{split}
    \sum_{j \in I} \nu(\iii j) &= \sum_{j \in I} \Lim_n \nu_n(\iii j)
    = \Lim_n \frac{\sum_{j \in I}\sum_{\jjj \in I^n}
    s_{\iii j\jjj}^t}{\sum_{\jjj \in I^{|\iii| + 1 + n}} s_\jjj^t} \\
    &= \Lim_n \nu_{n+1}(\iii) = \Lim_n \nu_n(\iii) = \nu(\iii)
  \end{split}
  \end{equation}
  whenever $\iii \in I^*$. Since, by Lemma \ref{thm:summa_arvio},
  \begin{align*}
    \nu_n(\iii) &\le D^t e^{-(|\iii|+n)P(t)} \sum_{\jjj \in I^n}
    s_{\iii\jjj}^t \le D^{2t} e^{-(|\iii|+n)P(t)} s_\iii^t \sum_{\jjj
      \in I^n} s_\jjj^t \\ &\le D^{3t} e^{-|\iii|P(t)} s_\iii^t
  \end{align*}
  and similarly the other way around, we have, using
  \ref{L:limitbounds},
  \begin{equation} \label{eq:kymppi}
    D^{-3t} e^{-|\iii|P(t)} s_\iii^t \le \nu(\iii) \le D^{3t}
    e^{-|\iii|P(t)} s_\iii^t.
  \end{equation}
%  Hence, in particular, $\nu(\iii) > 0$ for each $\iii \in I^*$.

  Next define for each $\iii \in I^*$ and $n \in \N$
  \begin{equation*}
    \mu_n(\iii) = \sum_{\jjj \in I^n} \nu(\jjj\iii).
  \end{equation*}
  Letting $\mu(\iii) = \Lim_n \mu_n(\iii)$, we have $\mu(\iii) > 0$
  and, using \eqref{eq:ysi},
  \begin{equation} \label{eq:yytoo}
    \sum_{j \in I} \mu(\iii j) = \Lim_n \sum_{j \in I} \sum_{\jjj \in
      I^n} \nu(\jjj\iii j) = \mu(\iii)
  \end{equation}
  whenever $\iii \in I^*$. Observe also that
  \begin{equation} \label{eq:kaatoo}
    \sum_{j \in I} \mu(j\iii) = \Lim_n \sum_{j \in I} \sum_{\jjj \in
      I^n} \nu(\jjj j\iii) = \Lim_n \mu_{n+1}(\iii) = \mu(\iii)
  \end{equation}
  whenever $\iii \in I^*$. Using now \eqref{eq:kymppi} and Lemma
  \ref{thm:summa_arvio}, we have
  \begin{align*}
    \mu_n(\iii) &\le D^{3t} \sum_{\jjj \in I^n} e^{-|\iii\jjj|P(t)}
    s_{\iii\jjj}^t \le D^{4t} e^{-|\iii|P(t)} s_\iii^t \sum_{\jjj \in
      I^n} e^{-|\jjj|P(t)} s_\jjj^t \\ &\le D^{5t} e^{-|\iii|P(t)} s_\iii^t
  \end{align*}
  and similarly the other way around. Hence
  \begin{equation} \label{eq:kootoo}
    D^{-5t} e^{-|\iii|P(t)} s_\iii^t \le \mu(\iii) \le D^{5t}
    e^{-|\iii|P(t)} s_\iii^t.
  \end{equation}

  Now, identifying $\iii \in I^*$ with the
  cylinder $[\iii]$, we notice, using \eqref{eq:yytoo}, that $\mu$ is
  a probability measure on the semi-algebra of all cylinder
  sets. Hence, using the Carath\'eodory-Hahn Theorem (see
  \cite[Theorem 11.20]{WheedenZygmund1977}), $\mu$ extends to a Borel
  probability measure on $I^\infty$. Observe that by \eqref{eq:kaatoo}
  and \eqref{eq:kootoo}, $\mu$ is an invariant $t$-semiconformal measure.
%  an additive set function and according to [LAAJENNUSLAUSE] $\nu$
%  extends to a Borel probability measure on $I^\infty$ (???).

%  Define for each $n \in \N$ a Borel probability measure
%  \begin{equation*}
%    \mu_n = \tfrac1n \sum_{j=0}^{n-1} \nu \circ \sigma^{-j}
%  \end{equation*}
%  and take $\mu$ to be an accumulation point of the set $\{ \mu_n
%  \}_{n \in \N}$ in the weak topology. For any $\iii \in I^*$, we have
%  \begin{equation*}
%    \bigl| \mu_n([\iii]) - \mu_n\bigl( \sigma^{-1}([\iii]) \bigr)
%    \bigr| = \tfrac1n 
%    \bigl| \nu([\iii]) - \nu\bigl( \sigma^{-n}([\iii]) \bigr) \bigr|
%    \le \tfrac1n \longrightarrow 0
%  \end{equation*}
%  as $n \to \infty$. Thus $\mu$ is invariant. We also have, using
%  \eqref{eq:kymppi}, \ref{S1}, and Lemma
%  \ref{thm:summa_arvio}, that for each $\iii \in I^*$
%  \begin{align*}
%    \mu([\iii]) &= \lim_{n \to \infty} \tfrac1n \sum_{j=0}^{n-1} \nu
%    \circ \sigma^{-j}([\iii])
%    = \lim_{n \to \infty} \tfrac1n \sum_{j=0}^{n-1} \sum_{\jjj \in
%    I^j} \nu([\jjj\iii]) \\
%    &\le D^{3t} \lim_{n \to \infty} \tfrac1n \sum_{j=0}^{n-1} \sum_{\jjj \in
%    I^j} s_{\jjj\iii}^t
%    \le D^{4t} \lim_{n \to \infty} \tfrac1n \sum_{j=0}^{n-1} s_\iii^t
%    \sum_{\jjj \in I^j} s_\jjj^t
%    \le D^{5t} s_\iii^t
%  \end{align*}
%  since cylinder sets have empty boundary. Estimating similarly into
%  the other direction, we have shown that $\mu$ is a semiconformal
%  measure with
%  \begin{equation} \label{eq:semiarvio}
%    D^{-5t} s_\iii^t \le \mu([\iii]) \le D^{5t} s_\iii^t
%  \end{equation}
%  as $\iii \in I^*$.

  We shall next prove that $\mu$ is ergodic. We have learned the
  following argument from the proof of \cite[Theorem
  3.8]{MauldinUrbanski1996}. Assume on the contrary that there exists a
  $\mu$-measurable set $A \subset I^\infty$ such that $\sigma^{-1}(A)
  = A$ and $0 < \mu(A) < 1$. Fix $\iii \in I^*$ and take an
  incomparable set $R \subset I^*$ for which $I^\infty \setminus A
  \subset \bigcup_{\jjj \in R} [\jjj]$ and
  \begin{equation*}
    \sum_{\jjj \in R} \mu([\iii\jjj]) \le 2\mu([\iii;I^\infty
    \setminus A]).
  \end{equation*}
  Using \eqref{eq:kootoo}, we infer
  \begin{align*}
    \mu([\iii;I^\infty \setminus A]) &\ge 2^{-1}D^{-6t} s_\iii^t
    \sum_{\jjj \in R} e^{-|\iii\jjj|P(t)} s_\jjj^t \\ &\ge
    2^{-1}D^{-16t} e^{|\iii|P(t)} \mu([\iii]) \sum_{\jjj \in R}
    e^{-|\iii\jjj|P(t)} e^{|\jjj|P(t)} \mu([\jjj]) \\
    &\ge 2^{-1}D^{-16t} \mu([\iii])\mu(I^\infty\setminus A).
  \end{align*}
  Therefore
  \begin{equation} \label{eq:koltoista}
  \begin{split}
    \mu\bigl( \sigma^{-n}(A) \cap [\iii] \bigr) &= \mu([\iii;A]) =
    \mu([\iii]) - \mu([\iii;I^\infty \setminus A]) \\
    &\le \bigl( 1 - 2^{-1}D^{-16t} \mu(I^\infty \setminus A) \bigr)
    \mu([\iii])
  \end{split}
  \end{equation}
  for each $\iii \in I^*$.
  Denote $\gamma = \bigl( 1 - 2^{-1}D^{-16t} \mu(I^\infty \setminus A)
  \bigr)$ and $\eta = (1 + \gamma^{-1})/2$. Take an incomparable set $R
  \subset I^*$ for which $A \subset \bigcup_{\iii \in R} [\iii]$ and
  $\sum_{\iii \in R} \mu([\iii]) \le \eta\mu(A)$. Since now, using
  \eqref{eq:koltoista},
  \begin{align*}
    \mu(A) &= \sum_{\iii \in R} \mu(A \cap [\iii]) = \sum_{\iii \in R}
    \mu\bigl( \sigma^{-n}(A) \cap [\iii] \bigr) \\
    &\le \sum_{\iii \in R} \gamma\mu([\iii]) \le \gamma\eta\mu(A) < \mu(A),
  \end{align*}
  we have finished the proof of the ergodicity.

  To prove the uniqueness, assume that $\tilde \mu$ is another invariant
  $t$-semiconformal measure. Now there exists $c \ge 1$ such that
  $\tilde\mu([\iii]) \le c \mu([\iii])$ whenever $\iii \in
  I^*$. According to the uniqueness of the Carath\'eodory-Hahn
  extension, this inequality implies that also $\tilde\mu \le
  c\mu$. Using the ergodicity of the measure $\mu$, it follows that
  $\tilde \mu = \mu$, see \cite[Theorem 6.10]{Walters1982}. The proof
  is finished.
  %Hence the uniqueness of the semiconformal measure follows
  %from the ergodicity of the measure $\mu$, see \cite[Theorem
  %6.10]{Walters1982}.
%  It follows from the ergodicity
%  %and [WALTERS, Theorem 6.10]
%  that $\mu$ and $\nu$ are mutually singular
%  and hence, there exists a set $A \subset I^\infty$ for which $\mu(A)
%  = 1$ and $\nu(A) = 0$. We obtain, using [MATTILA, Theorem 2.14], that
%  \begin{equation} \label{eq:neltoista}
%    \lim_{n \to \infty} \frac{\mu([\iii|_n])}{(\mu+\nu)([\iii|_n])} =
%    \lim_{n \to \infty}
%    \frac{(\mu+\nu)|_A([\iii|_n])}{(\mu+\nu)([\iii|_n])} = 1
%  \end{equation}
%  for $(\mu+\nu)$-almost all $\iii \in A$. Fix $\iii \in I^\infty$
%  satisfying \eqref{eq:neltoista} and choose $0 < \eps < (\tilde c +
%  1)^{-1}$. Then there is $n_0 \in \N$ such that
%  $(1-\eps)\nu([\iii|_n]) \le \eps \mu([\iii|_n])$ whenever $n \ge
%  n_0$. This gives a contradiction since
%  \begin{equation*}
%    \nu([\iii|_n]) \le \frac{\eps}{1-\eps} \mu([\iii|_n]) \le
%    \frac{\tilde c\eps}{1-\eps} \nu([\iii|_n]) < \nu([\iii|_n])
%  \end{equation*}
%  as $n \ge n_0$. The proof is finished.
\end{proof}

%\begin{remark}
%  Observe that the use of the Banach limit in the proof of Theorem
%  \ref{thm:semikonforminen} is not a necessity. Defining for each
%  $n \in \N$ a measure
%  \begin{equation*}
%    \tilde\nu_n = \frac{\sum_{\jjj \in I^n} s_\jjj^t
%    \delta_{\jjj\hhh}}{\sum_{\jjj \in I^n} s_\jjj^t},
%  \end{equation*}
%  where $\delta_\hhh$ is a probability measure with support $\{ \hhh
%  \}$, we have for each $\iii \in I^*$ and $n > |\iii|$
%  \begin{equation*}
%    \tilde\nu_n([\iii]) = \sum_{\jjj \in I^{n-|\iii|}}
%    \tilde\nu_n([\iii\jjj]) = \frac{\sum_{\jjj \in
%    I^{n-|\iii|}} s_{\iii\jjj}^t}{\sum_{\kkk \in I^n} s_\kkk^t}
%    = \nu_{n-|\iii|}(\iii),
%  \end{equation*}
%  where $\nu_n(\iii)$ is as in \eqref{eq:massa}. Therefore, instead of
%  using $\nu_n(\iii)$ and its Banach limit, we could have worked with
%  $\tilde\nu_n$ and its weak limit.
%\end{remark}

Let us next prove two lemmas for future reference. Define for $\iii
\in I^*$
\begin{equation*}
  \Omega_\iii = \{ \jjj \in I^\infty : \sigma^{n-1}(\jjj) \in [\iii]
                   \text{ with infinitely many } n \in \N \}
\end{equation*}
and
\begin{equation*}
  \Omega_\iii^0 = \{ \jjj \in I^\infty : \sigma^{n-1}(\jjj) \notin
  [\iii] \text{ for every } n \in \N \}.
\end{equation*}

\begin{lemma} \label{thm:aareton_toisto}
  Suppose $\mu$ is an invariant ergodic Borel probability
  measure on $I^\infty$. Then $\mu(\Omega_\iii^0)=0$ and
  $\mu(\Omega_\iii)=1$ for every $\iii \in I^*$ provided that
  $\mu([\iii])>0$.
\end{lemma}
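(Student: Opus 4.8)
The plan is to collapse the whole statement onto a single shift-invariant set and then invoke ergodicity. Write $B = [\iii]$ and set $U = \bigcup_{n=0}^\infty \sigma^{-n}(B)$, the set of symbols in which the word $\iii$ occurs at least once. Unwinding the definitions (and noting that $\sigma^{n-1}$ for $n \in \N$ runs over all nonnegative iterates of $\sigma$), I would first record the two identities
\[
  \Omega_\iii^0 = I^\infty \setminus U \qquad\text{and}\qquad \Omega_\iii = \bigcap_{m=0}^\infty \sigma^{-m}(U),
\]
the second because $\sigma^m(\jjj) \in U$ for every $m \ge 0$ says exactly that $\jjj$ returns to $B$ infinitely often. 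Both sets are Borel, since $B$ is a cylinder and $\sigma$ is continuous.

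The key structural observation is that $\sigma^{-1}(U) = \bigcup_{n=1}^\infty \sigma^{-n}(B) \subset U$, whence the sets $\sigma^{-m}(U)$ decrease in $m$ and their intersection is precisely $\Omega_\iii$. The same inclusion shows $\Omega_\iii$ is shift-invariant: shifting once only discards the zeroth coordinate and cannot change whether $B$ is visited infinitely often, so $\jjj \in \Omega_\iii$ iff $\sigma(\jjj) \in \Omega_\iii$, that is, $\sigma^{-1}(\Omega_\iii) = \Omega_\iii$. For the measure estimate I would first upgrade invariance from cylinders to all Borel sets: the set functions $A \mapsto \mu(A)$ and $A \mapsto \mu(\sigma^{-1}(A))$ agree on the semi-algebra of cylinders, hence on the generated Borel $\sigma$-algebra by the uniqueness part of the Carath\'eodory--Hahn Theorem. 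Consequently $\mu(\sigma^{-m}(U)) = \mu(U)$ for every $m$, and since the $\sigma^{-m}(U)$ form a decreasing sequence, continuity from above gives
\[
  \mu(\Omega_\iii) = \lim_{m \to \infty} \mu\bigl( \sigma^{-m}(U) \bigr) = \mu(U) \ge \mu(B) = \mu([\iii]) > 0.
\]
Because $\Omega_\iii$ is invariant and $\mu$ is ergodic, $\mu(\Omega_\iii) \in \{0,1\}$, and the strict positivity forces $\mu(\Omega_\iii) = 1$. Then $\mu(U) = \mu(\Omega_\iii) = 1$ yields $\mu(\Omega_\iii^0) = \mu(I^\infty \setminus U) = 0$, finishing the proof.

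The argument is in essence a quantitative Poincar\'e recurrence statement, and it is fairly short once the right invariant set is isolated. I expect the only genuine (and modest) point requiring care to be the passage from invariance on cylinders to invariance on the non-cylinder Borel sets $U$ and $\sigma^{-m}(U)$, together with the bookkeeping that reconciles the index convention $\sigma^{n-1}$, $n \in \N$, with the nonnegative iterates and confirms $\Omega_\iii = \bigcap_{m} \sigma^{-m}(U)$ rather than something larger; everything after that is the elementary interplay of decreasing sets, invariance, and the ergodic $\{0,1\}$-dichotomy.
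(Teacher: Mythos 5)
Your proof is correct and takes essentially the same route as the paper's: you identify $I^\infty \setminus \Omega_\iii^0$ with $U = \bigcup_{n \ge 0} \sigma^{-n}([\iii])$, write $\Omega_\iii = \bigcap_{m \ge 0} \sigma^{-m}(U)$, use $\sigma^{-1}(U) \subset U$ to obtain shift-invariance and the monotone limit $\mu(\Omega_\iii) = \mu(U) \ge \mu([\iii]) > 0$, and conclude by ergodicity, exactly as in the paper. The only difference is that you make explicit the (correct) upgrade of invariance from cylinder sets to all Borel sets via the uniqueness of the Carath\'eodory--Hahn extension, a step the paper's proof uses implicitly.
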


\begin{proof}
  Take $\iii \in I^*$ such that $\mu([\iii])>0$. Notice that
  $\sigma^{-1}(I^\infty \setminus \Omega_\iii^0) \subset I^\infty
  \setminus \Omega_\iii^0$ and due to the invariance of $\mu$, it holds
  that $\mu\bigl(\sigma^{-1}(I^\infty \setminus \Omega_\iii^0)\bigr) =
  \mu(I^\infty \setminus \Omega_\iii^0)$. Since $\Omega_\iii =
  \bigcap_{n=0}^\infty \sigma^{-n}(I^\infty \setminus \Omega_\iii^0)$, we
  have $\sigma^{-1}(\Omega_\iii) = \Omega_\iii$ and using the
  ergodicity of $\mu$, we have either $\mu(\Omega_\iii)=0$ or
  $\mu(\Omega_\iii)=1$. Since
  \begin{equation*}
    \mu(\Omega_\iii) = \lim_{n \to \infty} \mu\bigl( \sigma^{-n}(I^\infty
    \setminus \Omega_\iii^0) \bigr) = \mu(I^\infty \setminus
    \Omega_\iii^0) \ge \mu([\iii]) > 0,
  \end{equation*}
  it follows that $\mu(I^\infty \setminus \Omega_\iii^0) =
  \mu(\Omega_\iii) = 1$. The proof is finished.
\end{proof}

Assume that $I$ has at least three elements. For a fixed $j \in
I$, we denote $I_j = I \setminus \{ j \}$ and define
\begin{equation*}
  P_j(t) = \lim_{n \to \infty} \tfrac{1}{n} \log \sum_{\iii \in I_j^n}
  s_\iii^t.
\end{equation*}
%As before, the function $P_j \colon [0,\infty) \to \R$ is continuous
%and strictly decreasing.

\begin{lemma} \label{thm:Pj_pieni}
  Suppose $P(t)=0$. Then $P_j(t)<0$ for every $j \in I$.
\end{lemma}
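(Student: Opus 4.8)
The plan is to deduce the strict inequality from the semiconformal measure constructed in Theorem~\ref{thm:semikonforminen} together with the recurrence statement of Lemma~\ref{thm:aareton_toisto}, rather than estimating $\sum_{\iii\in I_j^n}s_\iii^t$ by hand. Let $\mu$ be the invariant ergodic $t$-semiconformal measure given by Theorem~\ref{thm:semikonforminen}. Since $P(t)=0$, semiconformality applied to the single symbol $j$ gives $\mu([j])\ge c^{-1}e^{-P(t)}s_j^t=c^{-1}s_j^t>0$, so Lemma~\ref{thm:aareton_toisto} applies with $\iii=j$ and yields $\mu(\Omega_j^0)=0$. The one elementary observation needed here is that $\Omega_j^0$ is precisely the set $I_j^\infty$ of symbols no coordinate of which equals $j$: the condition $\sigma^{n-1}(\jjj)\in[j]$ says exactly that the $n$-th coordinate of $\jjj$ is $j$, so $\sigma^{n-1}(\jjj)\notin[j]$ for every $n$ means that $\jjj$ never uses the letter $j$. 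Hence $\mu(I_j^\infty)=0$.

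Next I would write $I_j^\infty=\bigcap_{n=1}^\infty E_n$ with $E_n=\bigcup_{\iii\in I_j^n}[\iii]$, a decreasing sequence of sets, so that $\mu(E_n)\to\mu(I_j^\infty)=0$. As cylinders over distinct words of the same length are disjoint, $\mu(E_n)=\sum_{\iii\in I_j^n}\mu([\iii])$, and semiconformality together with $P(t)=0$ bounds each summand below by $c^{-1}s_\iii^t$. Consequently
\begin{equation*}
  \sum_{\iii\in I_j^n}s_\iii^t\le c\,\mu(E_n),
\end{equation*}
and the right-hand side tends to $0$ as $n\to\infty$.

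Finally I would compare this with the lower bound of Lemma~\ref{thm:summa_arvio}. The collection $\{s_\iii:\iii\in I_j^*\}$ still satisfies \ref{S1} and \ref{S2} with the same constant $D$, and $I_j$ has at least two elements, so $P_j$ is a genuine limit and Lemma~\ref{thm:summa_arvio} applies verbatim to this subsystem, giving $\sum_{\iii\in I_j^n}s_\iii^t\ge D^{-t}e^{nP_j(t)}$. If we had $P_j(t)\ge0$, this would force $\sum_{\iii\in I_j^n}s_\iii^t\ge D^{-t}$ for every $n$, contradicting the convergence to $0$ just established. Therefore $P_j(t)<0$.

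The main obstacle is purely conceptual: spotting that $\Omega_j^0$ coincides with $I_j^\infty$, so that the recurrence result is available, and noting that the subsystem on $I_j$ inherits \ref{S1}--\ref{S2} so that Lemma~\ref{thm:summa_arvio} supplies both the matching pressure $P_j$ and the uniform lower bound. A self-contained alternative, which I would mention but not adopt, avoids the measure altogether: decomposing each word of $I^n$ according to the position of its first occurrence of the letter $j$ as $\iii=\hhh j\kkk$ with $\hhh\in I_j^*$ and $\kkk\in I^*$, and using \ref{S1}, gives
\begin{equation*}
  \sum_{\iii\in I^n}s_\iii^t\ge D^{-2t}s_j^t\sum_{a+b=n-1}\Bigl(\sum_{\hhh\in I_j^a}s_\hhh^t\Bigr)\Bigl(\sum_{\kkk\in I^b}s_\kkk^t\Bigr).
\end{equation*}
If $P_j(t)=0$, then Lemma~\ref{thm:summa_arvio} applied to both $I$ and $I_j$ bounds each inner sum below by $D^{-t}$, so the right-hand side is at least $D^{-4t}s_j^t\,n$, while Lemma~\ref{thm:summa_arvio} caps the left-hand side by $D^t$, which is impossible for large $n$. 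I would present the measure-theoretic argument as the main proof, since it uses the machinery established immediately beforehand.
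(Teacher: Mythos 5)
Your proof is correct, but it takes a genuinely different route from the paper's. Both arguments begin identically: invoke Theorem \ref{thm:semikonforminen} to get the invariant ergodic $t$-semiconformal measure $\mu$, note $\mu([j])\ge c^{-1}s_j^t>0$, and apply Lemma \ref{thm:aareton_toisto} via the identification $\Omega_j^0=I_j^\infty$ to conclude $\mu(I_j^\infty)=0$. From there the paper argues by contradiction at the level of measures: assuming $P_j(t)=0$ (which is indeed the negation of the claim, since $I_j^n\subset I^n$ forces $P_j(t)\le P(t)=0$ -- a reduction the paper leaves implicit and you do not need at all), it constructs the unique invariant $t$-semiconformal measure $\mu_j$ on the subsystem $I_j^\infty$, checks that the average $\lambda_j=\tfrac12(\mu+\mu_j)$ is again invariant and $t$-semiconformal there, and derives the absurdity $1=\lambda_j(I_j^\infty)=\tfrac12$ from the uniqueness clause of Theorem \ref{thm:semikonforminen}. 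You instead stay with the single measure $\mu$: continuity from above along the decreasing sets $E_n=\bigcup_{\iii\in I_j^n}[\iii]$ converts $\mu(I_j^\infty)=0$ into the quantitative decay $\sum_{\iii\in I_j^n}s_\iii^t\le c\,\mu(E_n)\to 0$, which you then play against the uniform lower bound $D^{-t}e^{nP_j(t)}$ of Lemma \ref{thm:summa_arvio} applied to the subsystem. Your observation that this application of Lemma \ref{thm:summa_arvio} is essential is exactly right: mere decay of the partition sums would be compatible with $P_j(t)=0$ via subexponential decay, and it is the uniform bound that rules this out; your check that $\{s_\iii:\iii\in I_j^*\}$ inherits \ref{S1}--\ref{S2} (with $\#I_j\ge 2$ under the standing assumption $\#I\ge 3$) is the hypothesis needed for that. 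What each approach buys: yours is leaner, using only the existence half of Theorem \ref{thm:semikonforminen} together with ergodicity through Lemma \ref{thm:aareton_toisto}, and dispensing entirely with the auxiliary measure $\mu_j$ and the uniqueness/averaging trick; the paper's version, in exchange, showcases the rigidity encoded in the uniqueness statement it has just proved. Your sketched measure-free alternative (splitting each word of $I^n$ at the first occurrence of $j$ and comparing growth rates via Lemma \ref{thm:summa_arvio}) is also sound, modulo the empty-word conventions at the boundary terms $a=0$ and $b=0$, which the paper explicitly leaves to the reader anyway.
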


\begin{proof}
  Using Theorem \ref{thm:semikonforminen}, we denote with $\mu$ the
  invariant ergodic Borel probability measure on $I^\infty$ for which
  \begin{equation*}
    c^{-1}s_\iii^t \le \mu([\iii]) \le cs_\iii^t
  \end{equation*}
  for a constant $c \ge 1$ whenever $\iii \in I^*$.
  Assume now on the contrary that there is $j \in I$ such that
  $P_j(t)=0$. Using Theorem \ref{thm:semikonforminen}, we denote with
  $\mu_j$ the unique invariant $t$-semiconformal measure on
  $I_j^\infty$. Observe that there exists a constant $c_j \ge 1$ such
  that
  \begin{equation*}
    c_j^{-1}s_\iii^t \le \mu_j([\iii]) \le c_js_\iii^t
  \end{equation*}
  whenever $\iii \in I_j^*$. Notice also that $\mu_j(I^\infty
  \setminus I_j^\infty) = 0$ and $\mu(I_j^\infty) = 0$ by Lemma
  \ref{thm:aareton_toisto}. Defining $\lambda_j = \tfrac12(\mu +
  \mu_j)$, we have for each $\iii \in I_j^*$
  \begin{align*}
    \lambda_j([\iii]) &= \lambda_j([\iii] \setminus I_j^\infty) +
    \lambda_j([\iii] \cap I_j^\infty) \\
    &= \tfrac12\mu([\iii]) + \tfrac12\mu_j([\iii])
    \le \tfrac12 (c + c_j)s_\iii^t
  \end{align*}
  and similarly the other way around. Hence also $\lambda_j$ is
  invariant and $t$-semi\-conformal on $I_j^\infty$. From the
  uniqueness, we infer $\lambda_j = \mu_j$, and therefore
  \begin{equation*}
    1 = \mu_j(I_j^\infty) = \lambda_j(I_j^\infty) = \tfrac12(\mu +
    \mu_j)(I_j^\infty) = \tfrac12.
  \end{equation*}
  This contradiction finishes the proof.
\end{proof}

\section{Controlled Moran construction} \label{sec:CMC}

The collection $\{ X_\iii \subset \R^d : \iii \in I^* \}$
of compact sets with positive diameter is called a \emph{controlled
  Moran construction (CMC)} if
\begin{labeledlist}{M}
  \item $X_{\iii i} \subset X_\iii$ as $\iii \in I^*$ and $i \in I$,
  \label{M1}
  \item there exists a constant $D \ge 1$ such that
    \begin{equation*}
      D^{-1} \le
      \frac{\diam(X_{\iii\jjj})}{\diam(X_\iii)\diam(X_\jjj)}
      \le D
    \end{equation*}
  whenever $\iii,\jjj \in I^*$, \label{M2}
  \item there exists $n \in \N$ such that
    \begin{equation*}
      \max_{\iii \in I^n} \diam(X_\iii) < D^{-1}.
    \end{equation*}
  \label{M3}
\end{labeledlist}

\begin{lemma} \label{thm:rajanolla}
  Given CMC, there are constants $c>0$ and $0<\roo < 1$ such that
  $\max_{\iii \in I^n} \diam(X_\iii) \le c\roo^n$ for all $n \in \N$.
\end{lemma}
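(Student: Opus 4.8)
The plan is to extract a submultiplicative structure from \ref{M2} and then combine it with the single ``below one'' estimate provided by \ref{M3}. Write $d_\iii = \diam(X_\iii)$ and set $M_n = \max_{\iii \in I^n} d_\iii$ for $n \in \N$. First I would observe that every $\iii \in I^{n+m}$ factors as $\iii = \iii|_n\, \sigma^n(\iii)$ with $\iii|_n \in I^n$ and $\sigma^n(\iii) \in I^m$; the upper bound in \ref{M2} then gives $d_\iii \le D\, d_{\iii|_n} d_{\sigma^n(\iii)} \le D M_n M_m$, and taking the maximum over $\iii \in I^{n+m}$ yields $M_{n+m} \le D M_n M_m$. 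Setting $a_n = D M_n$, this reads $a_{n+m} \le a_n a_m$, so $(a_n)$ is submultiplicative.

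Next I would use \ref{M3} to pick $N \in \N$ with $M_N < D^{-1}$, so that $\alpha := a_N = D M_N \in (0,1)$. Iterating submultiplicativity gives $a_{qN} \le a_N^q = \alpha^q$ for every $q \in \N$. For an arbitrary $n \ge N$ I would perform Euclidean division $n = qN + r$ with $q \ge 1$ and $0 \le r < N$; submultiplicativity then gives $a_n \le \alpha^q a_r$ when $r \ge 1$ and $a_n \le \alpha^q$ when $r = 0$. Since $r$ ranges over the finitely many values $0,\ldots,N-1$, the factor $a_r$ is bounded by the constant $C_1 := \max\{ 1, a_1, \ldots, a_{N-1} \}$ independent of $n$, whence $a_n \le C_1 \alpha^q$.

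To convert the exponent $q$ into $n$, note that $n = qN + r < (q+1)N$ forces $q > n/N - 1$, so with $\roo := \alpha^{1/N} \in (0,1)$ we get $\alpha^q \le \alpha^{-1}\roo^{\,n}$ and therefore $M_n = a_n/D \le \bigl(C_1/(D\alpha)\bigr)\roo^{\,n}$ for all $n \ge N$. Finally, the finitely many remaining indices $1 \le n < N$ are handled by enlarging the constant: taking $c = \max\bigl\{ C_1/(D\alpha),\ \max_{1 \le n < N} M_n \roo^{-n} \bigr\}$ gives $M_n \le c\,\roo^{\,n}$ for every $n \in \N$, as required.

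The argument is essentially the quantitative core of the standard subadditivity (Fekete) lemma, so I expect no serious obstacle. The only point requiring care is the bookkeeping of the remainder term $r$ together with the finite set of small exponents, which must be absorbed into the single constant $c$ without disturbing the base $\roo$.
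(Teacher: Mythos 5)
Your proof is correct and is essentially the paper's own argument: both use (M3) to fix a block length $N$ with $D\max_{\iii\in I^N}\diam(X_\iii)<1$, decompose a word of length $n$ into blocks of that length plus a bounded remainder, apply (M2) multiplicatively across the blocks, and absorb the finitely many remainder cases into the constant $c$. Your Fekete-style packaging via the submultiplicative sequence $a_n = D M_n$ is only a cosmetic rephrasing of the paper's explicit decomposition $\iii = \iii_1\iii_2\cdots\iii_l$.
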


\begin{proof}
  Using \ref{M3}, we find $k \in \N$ and $0<a<1$ such that
  $\diam(X_\iii) < a/D$ for every $\iii \in I^k$. Fix $n > k$, take
  $\iii \in I^n$ and denote $\iii = \iii_1\iii_2\cdots\iii_l$, where
  $l-1$ is the integer part of $n/k$, $\iii_j \in I^k$ for $j \in \{
  1,\ldots,l-1 \}$, and $0<|\iii_l|\le k$. Since now, by \ref{M2},
  \begin{align*}
    \diam(X_\iii) &\le D^{l-1}\diam(X_{\iii_1})\diam(X_{\iii_2}) \cdots
    \diam(X_{\iii_{l-1}})\diam(X_{\iii_l}) \\
    &\le D^{l-1}\bigl(a/D\bigr)^{l-1} \max_{0<|\iii|\le k} \diam(X_\iii)
    \le a^{-1} \max_{0<|\iii|\le k} \diam(X_\iii) \bigl(a^{1/k}\bigr)^n,
  \end{align*}
  the proof is finished.
\end{proof}

Using the assumption \ref{M1} and Lemma \ref{thm:rajanolla}, we define a
\emph{projection mapping} $\pi\colon I^\infty \to X$ such that
\begin{equation*}
  \{ \pi(\iii) \} = \bigcap_{n=1}^\infty X_{\iii|_n}
\end{equation*}
as $\iii \in I^\infty$. It is clear that $\pi$ is continuous. The
compact set $E = \pi(I^\infty)$ is called the \emph{limit set} (of the CMC).
We call a Borel probability
measure $m$ on $E$ \emph{$t$-semiconformal} if
there exists a constant $c \ge 1$ such that
\begin{equation*}
  c^{-1}\diam(X_\iii)^t \le m(X_\iii) \le c\diam(X_\iii)^t
\end{equation*}
whenever $\iii \in I^*$ and
\begin{equation*}
  m(X_\iii \cap X_\jjj) = 0
\end{equation*}
whenever $\iii \bot \jjj$.
Observe that in Section \ref{sec:semiconformal} we defined a
semiconformal measure on $I^\infty$. The overlapping terminology
should not be confusing as it is clear from the content
which definition we use. Furthermore, for each $t \ge 0$, we set
\begin{equation} \label{eq:topo2}
  P(t) = \lim_{n \to \infty} \tfrac1n \log\sum_{\iii \in I^n}
  \diam(X_\iii)^t
\end{equation}
provided that the limit exists. It follows straight from the
definition that if there exists a $t$-semiconformal measure
on $E$ then $P(t) = 0$. Recalling Lemma \ref{thm:summa_arvio},
the equation $P(t)=0$ gives a natural upper bound for the Hausdorff
dimension of $E$, $\dimh(E) \le t$.

The following proposition provides sufficient conditions for the
existence of the $t$-semiconformal measure on $E$.
We say that a CMC satisfies a \emph{bounded overlapping property}
if $\sup_{x \in E} \sup\bigl\{ \# R : R \subset \{ \iii \in I^* : x \in
X_\iii \} \text{ is incomparable} \bigr\} < \infty$.
Observe that in the proposition the assumption according to which for
each $\iii,\jjj \in I^*$ and $\hhh \in I^\infty$ it holds that
$\pi(\iii\hhh) \in X_{\iii\jjj}$ whenever $\pi(\hhh) \in X_\jjj$ is
essential, see Example \ref{ex:nontractable}.

\begin{proposition} \label{thm:semileikkaus}
  Given a CMC, the limit in \eqref{eq:topo2} exists and there is a
  unique $t \ge 0$ such that $P(t)=0$. Assuming $P(t)=0$, there exists
  an invariant ergodic Borel probability
  measure $\mu$ on $I^\infty$ and constants $c,c'>0$ such that
  \begin{equation*}
    c^{-1}\diam(X_\iii)^t \le \mu([\iii]) \le c\diam(X_\iii)^t
  \end{equation*}
  whenever $\iii \in I^*$. Denoting $m = \mu \circ \pi^{-1}$, we
  have $\HH^t(A) \le c'm(A)$ for every $m$-measurable $A \subset
  E$. Furthermore, if in addition the CMC satisfies the bounded
  overlapping property and for each $\iii,\jjj \in I^*$ and $\hhh \in
  I^\infty$ it holds that $\pi(\iii\hhh) \in X_{\iii\jjj}$ whenever
  $\pi(\hhh) \in X_\jjj$ then $m$ is a $t$-semiconformal measure on $E$.
\end{proposition}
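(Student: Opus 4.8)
The plan is to read off parts one and two directly from Section~\ref{sec:semiconformal} and then to do the genuine work for the pushforward measure $m$. First I would set $s_\iii = \diam(X_\iii)$ for $\iii \in I^*$ and observe that \ref{M2} is literally \ref{S1}, while Lemma~\ref{thm:rajanolla} turns \ref{M3} into \ref{S2}. Hence the whole machinery of Section~\ref{sec:semiconformal} applies to this collection: the pressure in \eqref{eq:topo2} is exactly the $P(t)$ defined there, so the limit exists, $P$ is convex with $P(0)=\log\#I>0$ and $P(t)\to-\infty$, giving the unique zero $t$. Theorem~\ref{thm:semikonforminen} then supplies the invariant ergodic $t$-semiconformal measure $\mu$ on $I^\infty$, and substituting $P(t)=0$ into its defining inequalities yields $c^{-1}\diam(X_\iii)^t \le \mu([\iii]) \le c\diam(X_\iii)^t$, which is the asserted estimate.

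For the inequality $\HH^t(A)\le c'm(A)$ I would argue by an efficient covering together with outer regularity, and this step needs neither of the two extra hypotheses. Note first that $[\iii]\subset\pi^{-1}(X_\iii)$ gives $m(X_\iii)\ge\mu([\iii])\ge c^{-1}\diam(X_\iii)^t$, and in particular $\diam(X_\jjj)^t \le c\,\mu([\jjj])$ for every $\jjj$. Since $m$ is a finite Borel measure on the compact metric space $E$ it is outer regular, so given an $m$-measurable $A$, a scale $\delta>0$, and $\eta>0$, I would choose an open $U\supset A$ with $m(U)\le m(A)+\eta$. The open set $\pi^{-1}(U)$ is a union of cylinders, and using Lemma~\ref{thm:rajanolla} I would select an incomparable family $R\subset I^*$ with $\pi^{-1}(U)=\bigcup_{\jjj\in R}[\jjj]$ and $\diam(X_\jjj)\le\delta$ for all $\jjj\in R$. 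Because $A\subset E$ we have $A=\pi(\pi^{-1}(A))\subset\bigcup_{\jjj\in R}X_\jjj$, so $\{X_\jjj\}_{\jjj\in R}$ is a $\delta$-cover of $A$ with $\sum_{\jjj\in R}\diam(X_\jjj)^t\le c\sum_{\jjj\in R}\mu([\jjj])=c\,\mu(\pi^{-1}(U))=c\,m(U)\le c(m(A)+\eta)$. Letting $\eta\to0$ and $\delta\to0$ gives $\HH^t(A)\le c\,m(A)$.

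For the final statement the key reduction is that the two-sided bound for $m(X_\iii)$ comes for free once I know the overlap condition $m(X_\iii\cap X_\jjj)=0$ for $\iii\bot\jjj$. Indeed, decomposing $\pi^{-1}(X_\iii)$ according to the cylinders of level $|\iii|$, every piece $\pi^{-1}(X_\iii)\cap[\jjj]$ with $\jjj\neq\iii$ lies in $\pi^{-1}(X_\iii\cap X_\jjj)$ and is therefore $\mu$-null, so $m(X_\iii)=\mu(\pi^{-1}(X_\iii))=\mu([\iii])$, and the required bound is exactly the estimate from part two. Thus everything hinges on proving the overlap condition, which is where both the bounded overlapping property and the containment hypothesis must enter and where I expect the real difficulty.

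I would prove $m(X_\iii\cap X_\jjj)=0$ by contradiction. Suppose $\mu(W)>0$ for $W=\pi^{-1}(X_\iii\cap X_\jjj)$ with $\iii\bot\jjj$. The containment hypothesis, applied with an arbitrary prefix $\kkk$, gives $\pi(\kkk\hhh)\in X_{\kkk\iii}\cap X_{\kkk\jjj}$ whenever $\hhh\in W$, and $\kkk\iii\bot\kkk\jjj$; in other words an overlap at $W$ is transported to an overlap inside every cylinder $[\kkk]$. By Poincar\'e recurrence for the invariant measure $\mu$ there is a point $\hhh$ with $\pi(\hhh)\in E$ whose orbit satisfies $\sigma^{n}(\hhh)\in W$ for infinitely many $n$. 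For each such return time $n$ both $(\hhh|_n)\iii$ and $(\hhh|_n)\jjj$ contain $x:=\pi(\hhh)$ in their sets, and, since $\iii\bot\jjj$ cannot both be prefixes of $\sigma^n(\hhh)$, at least one of them ends in a block that is not a prefix of $\sigma^n(\hhh)$. Choosing $N+1$ return times spaced more than $\max\{|\iii|,|\jjj|\}$ apart (where $N$ is the bound from the overlapping property) and keeping, at each, this ``off-branch'' symbol, I would check that the resulting $N+1$ symbols are mutually incomparable while all containing $x$, contradicting bounded overlapping. The delicate point, and the main obstacle, is precisely this combinatorial extraction: verifying that the spacing of the return times forces incomparability of the selected symbols, so that a genuine violation of the overlap bound is produced.
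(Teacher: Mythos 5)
Your proposal is correct, and it diverges from the paper's proof in two of the three parts. The first part (feeding $s_\iii=\diam(X_\iii)$ into Section~\ref{sec:semiconformal} via \ref{M2}, Lemma~\ref{thm:rajanolla}, and Theorem~\ref{thm:semikonforminen}) is exactly the paper's argument. For the inequality $\HH^t(A)\le c'm(A)$, the paper instead establishes the uniform lower density bound $m\bigl(B(x,r)\bigr)\ge c^{-1}D^{-t}\min_{i\in I}\diam(X_i)^t\,r^t$ for all $x\in E$ (choosing the smallest $n$ with $X_{\iii|_n}\subset B(x,r)$ and using \ref{M2}) and then invokes \cite[Proposition 2.2(b)]{Falconer1997}; your covering argument --- outer regularity, an open $U\supset A$, and an incomparable family $R$ with $\pi^{-1}(U)=\bigcup_{\jjj\in R}[\jjj]$ and $\diam(X_\jjj)\le\delta$, which exists by \ref{M1} and Lemma~\ref{thm:rajanolla} via a minimal-cylinder selection --- is a correct, self-contained proof of the mass distribution principle in this setting; what it loses is the ball-density estimate itself, which the paper reuses later (it is the lower half of Theorem~\ref{thm:regular} and reappears in the proof of Theorem~\ref{thm:hausdorff_bounded}). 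For the last claim, the paper reduces the overlap-nullity $m(X_\iii\cap X_\jjj)=0$ to the transport inequality $\mu\bigl([\iii;\pi^{-1}(X_\hhh\cap X_\kkk)]\bigr)\le m(X_{\iii\hhh}\cap X_{\iii\kkk})$ (which is where the containment hypothesis enters) and defers the recurrence argument to the proof of \cite[Theorem 3.7]{Kaenmaki2004}, whereas you prove it directly; your extraction does go through as you suspected: if $\mu(W)>0$ with $W=\pi^{-1}(X_\iii\cap X_\jjj)$, Poincar\'e recurrence gives $\hhh$ with $\sigma^{n}(\hhh)\in W$ for infinitely many $n$, at each return at least one of $(\hhh|_n)\iii$, $(\hhh|_n)\jjj$ has its terminal block off the orbit of $\hhh$ (since $\iii\bot\jjj$ cannot both be prefixes of $\sigma^n(\hhh)$), and with returns $n_1<\cdots<n_{N+1}$ spaced by more than $\max\{|\iii|,|\jjj|\}$ the selected symbol at time $n_a$ differs from $\hhh|_{n_a+|{\rm block}_a|}$ while every later selected symbol agrees with $\hhh$ through position $n_b\ge n_a+|{\rm block}_a|$, so no selected symbol is a prefix of another; this yields $N+1$ mutually incomparable symbols whose sets all contain $\pi(\hhh)$, contradicting the bounded overlapping property. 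Your concluding reduction $m(X_\iii)=\mu([\iii])$ via the level-$|\iii|$ cylinder decomposition is the same computation with which the paper closes its proof. Net effect: your version is fully self-contained where the paper leans on two external citations, at the cost of not producing the density estimate that the paper exploits downstream.
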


\begin{proof}
  According to \ref{M2} and Lemma \ref{thm:rajanolla}, the collection
  $\{ \diam(X_\iii) : \iii \in I^* \}$ satisfies \ref{S1} and
  \ref{S2}. The proof of the first claim is now trivial. Suppose
  $P(t)=0$ and denote with $\mu$ the $t$-semiconformal measure on
  $I^\infty$ associated to this collection, see Theorem
  \ref{thm:semikonforminen}. For fixed $x \in E$ and $r > 0$ take
  $\iii = (i_1,i_2,\ldots) \in I^\infty$ such that $\pi(\iii) = x$ and
  choose $n$ to be the smallest integer for which $X_{\iii|_n} \subset
  B(x,r)$. Denoting $m = \mu \circ \pi^{-1}$ and using \ref{M2}, we
  obtain
  \begin{align*}
    m\bigl( B(x,r) \bigr) &\ge m(X_{\iii|_n}) \ge \mu([\iii|_n])
    \ge c^{-1}\diam(X_{\iii|_n})^t \\
    &\ge c^{-1}D^{-t}\diam(X_{\iii|_{n-1}})^t \diam(X_{i_n})^t \\
    &\ge c^{-1}D^{-t} \min_{i \in I} \diam(X_i)^t r^t,
  \end{align*}
  which, according to \cite[Proposition 2.2(b)]{Falconer1997}, gives the
  second claim. Here with the notation $B(x,r)$, we mean the open ball
  centered at $x$ with radius $r$. Furthermore, if the bounded
  overlapping property is satisfied then the proof of \cite[Theorem
  3.7]{Kaenmaki2004} shows that
  \begin{equation*}
    m(X_\iii \cap X_\jjj) = 0
  \end{equation*}
  whenever $\iii \bot \jjj$ provided that for each $\iii,\hhh,\kkk \in
  I^*$ it holds $\mu\bigl( [\iii; \pi^{-1}(X_\hhh \cap X_\kkk)] \bigr)
  \le m(X_{\iii\hhh} \cap X_{\iii\kkk})$. This is guaranteed by our
  extra assumption. Hence
  \begin{align*}
    m(X_\iii) &= m\Bigl( X_\iii \setminus \bigcup_{\iii \bot \jjj}
    X_\jjj \cap X_\iii \Bigr) \\
    &= \mu\Bigl( \pi^{-1}(X_\iii) \setminus \bigcup_{\iii \bot \jjj}
    \pi^{-1}(X_\jjj \cap X_\iii) \Bigr) = \mu([\iii]),
  \end{align*}
  which finishes the proof of the last claim.
\end{proof}

In the definition that follows, we introduce a natural separation
condition to be used on Moran constructions. Given CMC, define for
$r>0$
\begin{equation*}
  Z(r) = \{ \iii \in I^* : \diam(X_\iii) \le r < \diam(X_{\iii^-}) \}
\end{equation*}
and if in addition $x \in E$, we set
\begin{equation*}
  Z(x,r) = \{ \iii \in Z(r) : X_\iii \cap B(x,r) \ne \emptyset \}.
\end{equation*}
It is often useful to know the cardinality of the set $Z(x,r)$.
We say that a CMC satisfies a \emph{finite clustering
property} if $\sup_{x \in E} \limsup_{r \downarrow 0} \#
Z(x,r) < \infty$.
Furthermore, if $\sup_{x \in E}\sup_{r>0}\# Z(x,r) < \infty$ then the
CMC is said to satisfy a \emph{uniform finite clustering property}.

\begin{definition} \label{def:ball_cond}
  We say that a CMC satisfies a \emph{ball condition} if there
  exists a constant $0<\delta<1$ such that for each $x \in E$ there is
  $r_0>0$ such that for every $0<r<r_0$ there exists a set $\{ x_\iii
  \in \conv(X_\iii) : \iii \in Z(x,r) \}$ such that the collection 
  $\{ B(x_\iii,\delta r) : \iii \in Z(x,r) \}$ is
  disjoint.
  If $r_0>0$ above can be chosen to be infinity for every $x \in E$ then
  the CMC is said to satisfy a \emph{uniform ball condition}.
  Here with the notation $\conv(A)$, we mean the convex hull of
  a given set $A$.
\end{definition}

We shall next prove that the (uniform) ball condition and the
(uniform) finite clustering property are equivalent.

\begin{lemma} \label{thm:yhtenainen}
  Given a compact and connected set $A \subset \R^n$ and $k \in \N$,
  there exists points $x_1,\ldots,x_k \in A$ such that the collection
  of balls $\bigl\{ B\bigl( x_i,(2k)^{-1}\diam(A) \bigr) : i \in \{
  1,\ldots,k \} \bigr\}$ is disjoint and $\# \bigl\{ i \in \{
  1,\ldots,k \} : B\bigl( x_i,(2k)^{-1}\diam(A) \bigr) \cap B\bigl(
  x,(2k)^{-1}\diam(A) \bigr) \ne \emptyset \bigr\} \le 2$ for every
  $x \in \R^n$.
\end{lemma}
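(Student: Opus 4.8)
The plan is to reduce the whole problem to a one-dimensional picture governed by the distance to a single point of $A$. First I dispose of the trivial case $k=1$, where any single point $x_1 \in A$ works and both assertions are immediate (the disjointness is vacuous and at most one ball can be met). So assume $k \ge 2$ and write $d = \diam(A)$ and $\delta = (2k)^{-1}d$. By compactness I choose $a,b \in A$ with $|a-b| = d$, and consider the continuous function $f \colon A \to \R$, $f(y) = |y-a|$. Since $A$ is connected, $f(A)$ is an interval, and as it contains $f(a)=0$ and $f(b)=d$ we have $[0,d] \subset f(A)$. Hence for each $i \in \{1,\ldots,k\}$ I can pick $x_i \in A$ with $|x_i-a| = (i-1)\tfrac{d}{k-1}$; these are $k$ distinct points of $A$, monotonically ordered by their distance to $a$.

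For the disjointness, the reverse triangle inequality gives, for $i \ne j$,
\[
  |x_i - x_j| \ge \bigl| |x_i-a| - |x_j-a| \bigr| = |i-j|\tfrac{d}{k-1} \ge \tfrac{d}{k-1} > \tfrac{d}{k} = 2\delta,
\]
so the open balls $B(x_i,\delta)$ are pairwise disjoint. The same inequality drives the clustering bound: if three distinct indices $i_1 < i_2 < i_3$ were such that each $B(x_{i_m},\delta)$ met a common ball $B(x,\delta)$, then $|x_{i_m}-x| < 2\delta$ for every $m$, whence $|x_{i_1}-x_{i_3}| < 4\delta = 2d/k$. On the other hand the reverse triangle inequality forces $|x_{i_1}-x_{i_3}| \ge (i_3-i_1)\tfrac{d}{k-1} \ge \tfrac{2d}{k-1} > \tfrac{2d}{k}$, a contradiction. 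Thus at most two of the balls $B(x_i,\delta)$ can intersect any given $B(x,\delta)$, which is the second assertion.

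The only genuine step is producing points of $A$ at prescribed distances from $a$; everything afterwards is bookkeeping with the reverse triangle inequality. The main obstacle is therefore to guarantee that an \emph{arbitrary} compact connected set really contains a point at every distance in $[0,d]$ from $a$ --- this is exactly where connectedness enters, through the intermediate value property of the continuous map $f$, while compactness is what secures a diameter-realizing pair $a,b$ to begin with. It is worth noting that the slack between $\tfrac{1}{k-1}$ and $\tfrac1k$ is precisely what makes both inequalities strict, so that the disjointness is genuine and the count bound is a strict contradiction rather than a borderline equality.
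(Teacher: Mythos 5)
Your proof is correct and takes essentially the same route as the paper: there, one fixes a diametral pair $y_1,y_k$ and uses connectedness (the intermediate value property) to find points of $A$ whose orthogonal projections onto the line through $y_1$ and $y_k$ are equally spaced, the $1$-Lipschitz projection then yielding the separation of the centers. You simply replace the linear projection by the $1$-Lipschitz distance function $y \mapsto |y-a|$, with the reverse triangle inequality playing the same role; your spacing $\diam(A)/(k-1)$ even makes the disjointness and the two-ball count hold with strict inequalities, and you spell out the final estimates that the paper leaves to the reader.
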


\begin{proof}
  Choose $y_1,y_k \in A$ such that $|y_1 - y_k| = \diam(A)$. Denote
  the line going through $y_1$ and $y_k$ with $L$ and define for each
  $i \in \{ 2,\ldots,k-1 \}$ a point $y_i = \bigl( 1-\tfrac{i}{k}
  \bigr)y_1 + \tfrac{i}{k}y_k \in L$. Using the connectedness of $A$,
  we find for each $i \in \{ 1,\ldots,k \}$ a point $x_i \in A$ for
  which the inner product $(x_i - y_i) \cdot (y_k - y_1) = 0$. The
  proof is finished.
\end{proof}

\begin{theorem} \label{thm:bounded_separation}
  A CMC satisfies the (uniform) ball condition exactly when
  it satisfies the (uniform) finite clustering property.
\end{theorem}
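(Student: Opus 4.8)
\section*{Proof proposal}

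The plan is to prove the two implications separately, running the uniform and non-uniform versions in parallel because the constants produced will be uniform in $x$. Throughout I record two elementary facts about the indices $\iii \in Z(x,r)$. First, since $X_\iii$ meets $B(x,r)$ and $\diam(X_\iii) \le r$, the set $X_\iii$, hence also $\conv(X_\iii)$, lies in $B(x,3r)$. Second, writing $\iii = \iii^- i$ and using \ref{M2} together with $\diam(X_{\iii^-}) > r$, one gets $\diam(X_\iii) \ge D^{-1}\diam(X_{\iii^-})\diam(X_i) > cr$, where $c = D^{-1}\min_{i \in I}\diam(X_i) > 0$; thus every $X_\iii$ with $\iii \in Z(x,r)$ has diameter comparable to $r$.

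For the implication \emph{ball condition $\Rightarrow$ finite clustering property} I would use a volume (packing) argument. Fix $x$ and $r<r_0$, and let $\{x_\iii : \iii \in Z(x,r)\}$ be the centers furnished by the ball condition, so that the balls $B(x_\iii,\delta r)$ are pairwise disjoint. By the first geometric fact each $x_\iii \in \conv(X_\iii) \subset B(x,3r)$, so all these disjoint balls sit inside $B(x,3r)$. Comparing $d$-dimensional Lebesgue measures, the common volume constant of the ball cancels and one gets $\#Z(x,r)\,(\delta r)^d \le (3r)^d$, whence $\#Z(x,r) \le (3/\delta)^d$. This bound is independent of $x$ and $r$, so the finite clustering property follows at once, and its uniform version follows when $r_0 = \infty$.

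The substantial direction is \emph{finite clustering property $\Rightarrow$ ball condition}. Let $N$ be the finite bound on $\#Z(x,r)$, valid for all $r$ in the uniform case and for all $r<r_0(x)$ in the non-uniform case, with $N$ uniform in $x$. The idea is to manufacture inside each $\conv(X_\iii)$ a pool of well-spread candidate centers and then pick one candidate per index greedily. Applying Lemma \ref{thm:yhtenainen} to the compact connected set $\conv(X_\iii)$ (note $\diam(\conv(X_\iii)) = \diam(X_\iii)$) with $k = 2N$, I obtain points $x_1^\iii,\dots,x_k^\iii \in \conv(X_\iii)$ such that any ball of radius $(2k)^{-1}\diam(X_\iii)$ meets at most two of the balls $B(x_m^\iii,(2k)^{-1}\diam(X_\iii))$; equivalently, any single point lies within distance $k^{-1}\diam(X_\iii)$ of at most two of the $x_m^\iii$. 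Set $\delta = c/(4N)=c/(2k)$; since $\diam(X_\iii) \ge cr$, this gives $2\delta r = cr/k \le k^{-1}\diam(X_\iii)$, so every point lies within distance $2\delta r$ of at most two candidates of $\conv(X_\iii)$.

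Now enumerate $Z(x,r) = \{\iii_1,\dots,\iii_M\}$ with $M \le N$ and choose centers greedily: having selected $x_{\iii_1},\dots,x_{\iii_{j-1}}$, each previously chosen center excludes at most two of the $k = 2N$ candidates of $\conv(X_{\iii_j})$ (those within $2\delta r$ of it), so at most $2(j-1) \le 2(N-1) < 2N = k$ candidates are excluded and an admissible $x_{\iii_j}$ remains. The resulting centers are pairwise at distance $\ge 2\delta r$, hence the balls $B(x_{\iii},\delta r)$ are disjoint, which is exactly the ball condition with the uniform constant $\delta = c/(4N)$ (and $r_0 = \infty$ in the uniform case). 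The main obstacle is precisely this selection step: one must guarantee that the candidate pool in each $\conv(X_\iii)$ strictly outnumbers the total number of exclusions, which is what forces the choice $k = 2N$ and, through the radius bookkeeping supplied by Lemma \ref{thm:yhtenainen} together with the lower bound $\diam(X_\iii)\ge cr$, pins down the admissible separation constant $\delta$.
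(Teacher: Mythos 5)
Your proof is correct and follows essentially the same route as the paper: a packing/volume comparison inside $B(x,3r)$ for the implication from the ball condition to the finite clustering property, and for the converse a greedy choice of one center per symbol from the well-spread candidate points supplied by Lemma \ref{thm:yhtenainen}, using the lower bound $\diam(X_\iii) > D^{-1}\min_{i \in I}\diam(X_i)\, r$ for $\iii \in Z(x,r)$. The only cosmetic difference is that you fix the candidate count at $k=2N$ for every symbol, whereas the paper takes $k=2n$ with $n = \# Z(x,r)$; the exclusion bookkeeping (at most two candidates lost per previously chosen center) and the resulting separation constant are otherwise the same.
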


\begin{proof}
  We shall prove the non-uniform case. The uniform case follows
  similarly. Assuming the ball condition,
  take $x \in E$ and $0<r<r_0$. Choose for each $\iii \in
  Z(x,r)$ a point $x_\iii \in \conv(X_\iii)$ such that the balls
  $B(x_\iii,\delta r)$ are disjoint as $\iii \in Z(x,r)$. Now clearly
  \begin{equation*}
    B(x_\iii,\delta r) \subset B\bigl( x,(2+\delta)r \bigr)
  \end{equation*}
  for every $\iii \in Z(x,r)$. Hence
  \begin{align*}
    \# Z(x,r)\delta^d r^d \alpha(d) &= \sum_{\iii \in Z(x,r)}
    \HH^d\bigl( B(x_\iii,\delta r) \bigr) = \HH^d\biggl( \bigcup_{\iii
    \in Z(x,r)} B(x_\iii,\delta r) \biggr) \\
    &\le \HH^d\bigl( B\bigl( x,(2+\delta)r \bigl) \bigr) =
    (2+\delta)^d r^d \alpha(d),
  \end{align*}
  where $\alpha(d)$ denotes the $d$-dimensional Hausdorff measure of
  the unit ball. This shows that the CMC satisfies the finite clustering
  property.

  Conversely,
  by the finite clustering property, there exists $M > 0$ such that for each
  $x \in E$ there is $r_0 > 0$ such that $\# Z(x,r) < M$ whenever
  $0<r<r_0$. Choose $\delta = (4MD)^{-1}\min_{i \in I}\diam(X_i)$ and
  for fixed $x \in E$ and $0<r<r_0$ denote the symbols of $Z(x,r)$
  with $\iii_1,\ldots,\iii_n$, where $n = \# Z(x,r)$. We shall define
  the points $x_{\iii_1},\ldots,x_{\iii_n}$ needed in the ball
  condition inductively. Choose $x_{\iii_1}$ to be any point of
  $\conv(X_{\iii_1})$ and assume the points 
  $x_{\iii_1},\ldots,x_{\iii_k}$, where $k \in \{ 1,\ldots,n-1 \}$,
  have already been chosen such that the collection of balls $\bigl\{
  B(x_{\iii_i},\delta r) : i \in \{ 1,\ldots,k \} \bigr\}$ is
  disjoint. Using Lemma \ref{thm:yhtenainen}, we find points
  $y_1,\ldots,y_{2n} \in \conv(X_{\iii_{k+1}})$ such that the collection
  $\bigl\{ B\bigl( y_j,(4n)^{-1}\diam(X_{\iii_{k+1}}) \bigr) : j \in
  \{ 1,\ldots,2n \} \bigr\}$ is disjoint. Since, using \ref{M2},
  \begin{equation*}
    \delta r \le (4MD)^{-1}\min_{i \in I}\diam(X_i)\diam(X_{\iii^-})
    \le (4n)^{-1}\diam(X_\iii)
  \end{equation*}
  for every $\iii \in Z(x,r)$, Lemma \ref{thm:yhtenainen} says also
  that the balls $B(x_{\iii_i},\delta r)$, $i \in \{ 1,\ldots,k \}$,
  can intersect at most $2k$ of balls $B\bigl(
  y_j,(4n)^{-1}\diam(X_{\iii_{k+1}}) \bigr)$, $j \in
  \{1,\ldots,2n\}$. Hence, choosing $x_{\iii_{k+1}} \in \{
  y_1,\ldots,y_{2n} \}$ such that $B\bigl(
  x_{\iii_{k+1}},(4n)^{-1}\diam(X_{\iii_{k+1}}) \bigr) \cap
  B(x_{\iii_i},\delta r) = \emptyset$ for every $i \in \{ 1,\ldots,k
  \}$, we have finished the proof.
\end{proof}

It is evident that the bounded overlapping property does not imply the finite
clustering property and in Example \ref{ex:ekaesim}, we show that the
converse does not hold either. The natural condition according to
which $\sup_{x \in E, r>0} \sup\bigl\{ \# R : R 
\subset \{ \iii \in I^* : X_\iii \cap B(x,r) \ne \emptyset \; \mbox{and}
\; \diam(X_{\iii^-}) > r \} \; \mbox{is incomparable} \bigr\} < \infty$
clearly implies both the bounded overlapping property and the uniform
finite clustering property. See also \cite[Lemma
2.7]{MauldinUrbanski1996}. However, we do not need this condition as
under a minor technical assumption, the finite clustering property
implies the bounded overlapping property.

\begin{lemma} \label{thm:technical_lemma}
  If a CMC satisfies the finite clustering property then it satisfies
  the bounded overlapping property provided that
  \begin{equation*}
    X_\iii \cap E = \pi([\iii])
  \end{equation*}
  for each $\iii \in I^*$.
\end{lemma}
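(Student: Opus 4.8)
The plan is to bound, for a fixed $x \in E$, the cardinality of any incomparable set $R \subset \{ \iii \in I^* : x \in X_\iii \}$ by the finite clustering constant $M := \sup_{y \in E} \limsup_{r \downarrow 0} \# Z(y,r)$, which is finite by hypothesis. Since it suffices to bound every finite incomparable subset, I may assume $R$ is finite. The crucial use of the extra assumption $X_\iii \cap E = \pi([\iii])$ is this: for each $\iii \in R$ we have $x \in X_\iii \cap E = \pi([\iii])$, so I can select a symbol $\jjj_\iii \in [\iii]$ with $\pi(\jjj_\iii) = x$, that is, $x$ is reached by an infinite word \emph{prolonging} $\iii$. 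Without the hypothesis one only knows $x \in X_\iii$, and the point $x$ need not be the projection of a symbol extending $\iii$, so the argument below would collapse.

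Next I would pass from the words of $R$ to words of controlled diameter. By \ref{M1} the diameters $\diam(X_{\jjj_\iii|_n})$ are non-increasing in $n$, and by Lemma \ref{thm:rajanolla} they tend to $0$. Hence, for every $r$ with $0 < r < r_R := \min_{\iii \in R} \diam(X_\iii)$, there is a unique prefix $\kkk_\iii$ of $\jjj_\iii$ lying in $Z(r)$; and since $\diam(X_\iii) > r$ while the diameters along $\jjj_\iii$ are non-increasing, this prefix is a proper extension of $\iii$, so $|\kkk_\iii| > |\iii|$. Because $x = \pi(\jjj_\iii) \in X_{\kkk_\iii}$ and trivially $x \in B(x,r)$, we get $X_{\kkk_\iii} \cap B(x,r) \ne \emptyset$, so $\kkk_\iii \in Z(x,r)$. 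Thus $\iii \mapsto \kkk_\iii$ maps $R$ into $Z(x,r)$.

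The heart of the matter is the injectivity of this map. Given distinct $\iii,\iii' \in R$, incomparability gives $p := |\iii \land \iii'| < \min(|\iii|,|\iii'|)$, and since $\jjj_\iii$ and $\jjj_{\iii'}$ extend $\iii$ and $\iii'$, they agree on their first $p$ coordinates and disagree at coordinate $p+1$, so $|\jjj_\iii \land \jjj_{\iii'}| = p$. As $|\kkk_\iii| > |\iii| > p$ and likewise $|\kkk_{\iii'}| > p$, the words $\kkk_\iii$ and $\kkk_{\iii'}$ are prefixes of $\jjj_\iii$ and $\jjj_{\iii'}$ of length exceeding $p$; were they equal, they would form a common prefix longer than $p$, contradicting $|\jjj_\iii \land \jjj_{\iii'}| = p$. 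Hence $\kkk_\iii \ne \kkk_{\iii'}$, and the map is injective.

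Finally I would assemble these facts: injectivity gives $\# R \le \# Z(x,r)$ for every $r \in (0,r_R)$, and taking the limit superior as $r \downarrow 0$ yields $\# R \le \limsup_{r \downarrow 0} \# Z(x,r) \le M$. Since $x$ and $R$ were arbitrary, the bounded overlapping property holds with the same constant $M$. I expect the main obstacle to be conceptual rather than computational: one must observe that although the threshold $r_R$ separating the elements of $R$ depends on $R$, this is harmless, since the finite clustering property is phrased through $\limsup_{r \downarrow 0}$ and therefore only probes arbitrarily small scales. The assumption $X_\iii \cap E = \pi([\iii])$ is precisely what permits the incomparability of $R$ to be transported, via the prolongations $\jjj_\iii$, into genuine incomparability of the rescaled words $\kkk_\iii$ inside $Z(x,r)$.
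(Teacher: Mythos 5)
Your proof is correct and follows essentially the same route as the paper's: for each $\iii \in R$ you use the hypothesis $X_\iii \cap E = \pi([\iii])$ to run an infinite branch through $\iii$ projecting to $x$, truncate it at scale $r$ to land in $Z(x,r)$, and use incomparability of $R$ to get injectivity, so $\# R \le \# Z(x,r) \le M$. The only cosmetic difference is bookkeeping: you force proper extension by taking $r < \min_{\iii \in R}\diam(X_\iii)$, while the paper instead chooses $r$ so small that $\min_{\jjj \in Z(x,r)}|\jjj| > \max_{\iii \in R}|\iii|$ --- the two devices serve the identical purpose.
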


\begin{proof}
  Set $M = \sup_{x \in E} \limsup_{r \downarrow 0} \# Z(x,r)$. Fix $x
  \in E$ and assume that $R \subset I^*$ is a finite and incomparable
  set such that $x \in X_\iii$ for each $\iii \in R$. Choose $r>0$
  small enough so that $\# Z(x,r) \le M$ and
  \begin{equation*}
    \min_{\jjj \in Z(x,r)}|\jjj| > \max_{\iii \in R}|\iii|.
  \end{equation*}
  According to the assumption, $x \in \bigcap_{\iii \in R}
  \pi([\iii])$, and hence, for each $\iii \in R$ there exists at least one
  $\iii^* \in Z(x,r)$ such that $\iii^*|_n = \iii$ for some $n \in
  \N$. The incomparability of $R$ now implies that $\iii^* \ne \jjj^*$ for
  distinct $\iii,\jjj \in R$. Consequently, $\# R \le \# Z(x,r) \le M$.
\end{proof}

Let us examine how the Hausdorff measure is related to the
ball condition. Bear in mind that the finite clustering property and
the ball condition are equivalent.

\begin{theorem} \label{thm:regular}
  If a CMC satisfies the uniform finite clustering property,
  $P(t)=0$, and $m$ is the measure of Proposition
  \ref{thm:semileikkaus} then there exist constants $r_0>0$ and $K\ge
  1$ such that 
  \begin{equation*}
    K^{-1}r^t \le m\bigl( B(x,r) \bigr) \le Kr^t
  \end{equation*}
  whenever $x \in E$ and $0<r<r_0$. Consequently, $\dimh(E) = \dimm(E)
  = t$.
\end{theorem}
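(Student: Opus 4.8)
The plan is to sandwich $m\bigl(B(x,r)\bigr)$ between two constant multiples of $r^t$ and then read off the dimension equalities from this Ahlfors $t$-regularity. The lower estimate is essentially already contained in Proposition~\ref{thm:semileikkaus}: the computation there produces, for every $x \in E$ and $r>0$, a level $n$ with $X_{\iii|_n} \subset B(x,r)$ and $\diam(X_{\iii|_{n-1}}) \ge r$, and hence
\begin{equation*}
  m\bigl( B(x,r) \bigr) \ge \mu([\iii|_n]) \ge c^{-1}D^{-t}\min_{i\in I}\diam(X_i)^t\, r^t .
\end{equation*}
So I would simply invoke that inequality, recording $K_1 = cD^t\bigl(\min_{i\in I}\diam(X_i)\bigr)^{-t}$.

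For the upper estimate I would first fix $r_0 \le \min_{i\in I}\diam(X_i)$. The purpose of this choice is that for every $\jjj \in I^\infty$ the diameters $\diam(X_{\jjj|_n})$ begin above any $r<r_0$ and decrease to $0$ by Lemma~\ref{thm:rajanolla}, so there is a well-defined smallest $n$ with $\diam(X_{\jjj|_n}) \le r < \diam(X_{\jjj|_{n-1}})$, that is, $\jjj|_n \in Z(r)$, with all symbols involved genuinely nonempty. The crucial observation is then
\begin{equation*}
  \pi^{-1}\bigl( B(x,r) \bigr) \subset \bigcup_{\iii \in Z(x,r)} [\iii],
\end{equation*}
because if $\pi(\jjj) \in B(x,r)$ then $\pi(\jjj) \in X_{\jjj|_n} \cap B(x,r)$ forces $\jjj|_n \in Z(x,r)$ and $\jjj \in [\jjj|_n]$.

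Since $Z(r)$ is incomparable (a strict prefix relation between two of its elements would give $\diam(X_{\jjj^-}) \le \diam(X_\iii) \le r$, contradicting the definition of $Z(r)$), these cylinders are disjoint, and combining $\mu([\iii]) \le c\diam(X_\iii)^t \le cr^t$ with the uniform bound $M = \sup_{x\in E}\sup_{r>0}\# Z(x,r) < \infty$ supplied by the uniform finite clustering property yields
\begin{equation*}
  m\bigl( B(x,r) \bigr) = \mu\bigl( \pi^{-1}(B(x,r)) \bigr) \le \sum_{\iii \in Z(x,r)} \mu([\iii]) \le cM r^t .
\end{equation*}
Taking $K = \max\{K_1, cM\}$ completes the two-sided bound. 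The step I expect to be the main obstacle is precisely this upper estimate: the temptation is to bound $m(X_\iii)$ directly, but that would require the full $t$-semiconformality of $m$, which is not among the hypotheses here; passing to $\mu$ on the symbol space and covering $\pi^{-1}(B(x,r))$ by the incomparable family $\{[\iii] : \iii \in Z(x,r)\}$ sidesteps this and reduces everything to the cardinality bound $M$.

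Finally I would deduce the dimension identities from the regularity just proved. The upper bound $m(B(x,r)) \le Kr^t$ gives $m(U) \le K(\diam U)^t$ for sets of small diameter, so the mass distribution principle forces $\HH^t(E) \ge K^{-1}m(E) > 0$ and hence $\dimh(E) \ge t$; together with the bound $\dimh(E) \le t$ noted after \eqref{eq:topo2} this gives $\dimh(E) = t$. The lower bound $m(B(x,r)) \ge K^{-1}r^t$ caps the cardinality of any $r$-separated subset of $E$ by a constant multiple of $r^{-t}$ (disjoint balls of radius $r/2$ have total $m$-measure at most $1$), so the upper box dimension is at most $t$; since $\dimh(E) \le \underline{\dim}_M(E) \le \overline{\dim}_M(E)$ always holds, all of these quantities coincide with $t$, giving $\dimh(E) = \dimm(E) = t$.
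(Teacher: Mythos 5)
Your proof is correct and follows the paper's argument essentially verbatim: the lower bound is read off from the proof of Proposition \ref{thm:semileikkaus}, and the upper bound uses the same inclusion $\pi^{-1}\bigl(B(x,r)\bigr) \subset \bigcup_{\iii \in Z(x,r)}[\iii]$ together with $\mu([\iii]) \le c\diam(X_\iii)^t \le cr^t$ and the uniform clustering bound on $\# Z(x,r)$. The only difference is cosmetic: where the paper cites \cite[Theorem 5.7]{Mattila1995} for the dimension consequences, you derive them directly via the mass distribution principle and a ball-counting estimate for the upper Minkowski dimension, which is precisely the content of that citation.
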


\begin{proof}
  Suppose $P(t)=0$ and $m = \mu \circ \pi^{-1}$ is the measure of
  Proposition \ref{thm:semileikkaus}. Seeing that $\pi^{-1}\bigl(
  B(x,r) \bigr) \subset \bigcup_{\iii \in Z(x,r)} [\iii]$, we get
  for fixed $x \in E$ and $r>0$
  \begin{align*}
    m\bigl( B(x,r) \bigr) &\le \mu\biggl( \bigcup_{\iii \in Z(x,r)}
    [\iii] \biggr) \le \sum_{\iii \in Z(x,r)} \mu([\iii]) \\
    &\le c\sum_{\iii \in Z(x,r)} \diam(X_\iii)^t \le \# Z(x,r) cr^t,
  \end{align*}
  which, together with the uniform finite clustering property and the
  proof of Proposition \ref{thm:semileikkaus}, gives the first claim.

  The second claim follows immediately from \cite[Theorem
  5.7]{Mattila1995}.
\end{proof}

\begin{remark} \label{rem:bounded_hausdorff}
  We remark that in Theorem \ref{thm:regular}, the measure $m$ can be
  replaced with the Hausdorff measure $\HH^t|_E$ by recalling
  \cite[Proposition 2.2]{Falconer1997}. In fact, it is sufficient to
  assume the finite clustering property instead of the uniform finite
  clustering property to see that $\HH^t|_E$ is proportional to
  $m$. Especially, under this assumption, it holds that
  $0<\HH^t(E)<\infty$.
\end{remark}

One could easily prove that if $\HH^t|_E$ is $t$-semiconformal
for some $t \ge 0$ then there exists a set $A 
\subset E$ with $\HH^t(E \setminus A) = 0$ such that $\sup_{x \in
A}\limsup_{r \downarrow 0} \# Z(x,r) < \infty$. Since this hardly
generalizes to the whole set $E$ without any additional assumption, we
propose the following definition. We say that a CMC is
\emph{tractable} if there exists a constant $C\ge 1$ such that for
each $r>0$ we have
\begin{equation} \label{eq:tractable}
  \dist(X_{\hhh\iii},X_{\hhh\jjj}) \le C\diam(X_\hhh)r
\end{equation}
whenever $\hhh \in I^*$, $\iii,\jjj \in Z(r)$, and $\dist(X_\iii,X_\jjj) \le r$.
See Example \ref{ex:nontractable} for an example of a nontractable CMC.
Comparing the following theorem to \cite[Theorem 2.1]{Schief1994} and
\cite[Theorem 1.1]{PeresRamsSimonSolomyak2001}, we see that the
uniform ball condition is a proper substitute for the open set
condition in the setting of tractable CMC's. See also Example
\ref{ex:suorakaiteet3}.

%Given CMC, assume $\HH^t|_E$ is $t$-semiconformal for some $t
%\ge 0$. Then there exists a set $A \subset E$ with $\HH^t(E
%\setminus A) = 0$ such that $\sup_{x \in A}\limsup_{r \downarrow 0} \#
%Z(x,r) < \infty$. This follows from \cite[Theorem
%6.2(1)]{Mattila1995}, since for fixed $x \in E$ and $r > 0$
%\begin{align*}
%  \HH^t|_E\bigl( B(x,2r) \bigr) &\ge \HH^t|_E\biggl( \bigcup_{\iii
%  \in Z(x,r)} X_\iii \biggr) = \sum_{\iii \in Z(x,r)}
%  \HH^t|_E(X_\iii) \\
%  &\ge c^{-1} \sum_{\iii \in Z(x,r)} \diam(X_\iii)^t \\
%  &\ge c^{-1}D^{-1} \sum_{\iii \in Z(x,r)} \diam(X_{\iii^-})^t
%  \min_{i \in I} \diam(X_i)^t \\
%  &\ge \# Z(x,r) c^{-1}D^{-1} \min_{i \in I} \diam(X_i)^t r^t
%\end{align*}
%using semiconformality and \ref{M2}. Since this hardly generalizes to
%the whole set $E$ without any additional assumption, we propose
%the following theorem.

\begin{theorem} \label{thm:hausdorff_bounded}
  A tractable CMC satisfies the uniform finite clustering property
  provided that $\HH^t(E)>0$ for the unique $t \ge 0$ satisfying $P(t)=0$.
\end{theorem}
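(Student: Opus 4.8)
The plan is to argue by contradiction: extract from $\HH^t(E)>0$ a pointwise bound on clustering valid on a set of positive measure, and then use tractability together with the recurrence in Lemma~\ref{thm:aareton_toisto} to force that bound to be violated at a single point.

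First I would record that $0<\HH^t(E)<\infty$: positivity is the hypothesis, while covering $E$ by $\{X_\iii:\iii\in I^n\}$ and applying Lemma~\ref{thm:summa_arvio} with $P(t)=0$ gives $\HH^t(E)\le\liminf_{n}\sum_{\iii\in I^n}\diam(X_\iii)^t\le D^t$. Let $m=\mu\circ\pi^{-1}$ be the measure of Proposition~\ref{thm:semileikkaus}, so $\HH^t\le c'm$ on $E$ and $\mu([\iii])\ge c^{-1}\diam(X_\iii)^t$. Writing $\HH^t|_E=f\,dm$ with $0\le f\le c'$, the differentiation theorem together with the fact that $\HH^t(E\cap B(x,r))/(2r)^t\le1$ for $\HH^t$-almost every $x$ gives, for $m$-almost every $x$ with $f(x)>0$, the upper regularity $m(B(x,r))\le C_x r^t$ for all small $r$. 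The key elementary estimate is then that, since the cylinders $[\iii]$, $\iii\in Z(x,r)$, are pairwise disjoint and contained in $\pi^{-1}(B(x,2r))$, one has $\#Z(x,r)\,c_1 r^t\le\sum_{\iii\in Z(x,r)}\mu([\iii])\le m(B(x,2r))\le C_x(2r)^t$, where $c_1=c^{-1}(D^{-1}\min_{i\in I}\diam(X_i))^t$. Hence there is $A\subset E$ with $m(A)>0$ on which $\beta_x:=\limsup_{r\downarrow0}\#Z(x,r)<\infty$ for every $x\in A$; moreover, as $\mu(\Omega_\iii)=1$ for each $\iii$, I may take every $x\in A$ to be the image of a symbol lying in $\bigcap_{\iii\in I^*}\Omega_\iii$.

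Now suppose the uniform finite clustering property fails, so $\#Z(x_k,r_k)\to\infty$ for some $x_k\in E$ and $r_k>0$. Covering $B(x_k,2r_k)$ by a bounded number of balls of radius $r_k/2$ and using pigeonhole, I extract an incomparable $\mathcal Z_k\subset Z(r_k)$ with $\#\mathcal Z_k\to\infty$ and $\dist(X_\iii,X_\jjj)\le r_k$ for all $\iii,\jjj\in\mathcal Z_k$. Fix $a=\pi(\jjj)\in A$, set $\beta=\beta_a$, choose $k$ with $\#\mathcal Z_k>\beta$, and write $\mathcal Z_k=\{\iii_1,\dots,\iii_N\}$. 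For any $\hhh\in I^*$, tractability applied to each pair and \ref{M2} show that $\hhh\iii_1,\dots,\hhh\iii_N$ are incomparable, each with $\diam(X_{\hhh\iii_b})\asymp\diam(X_\hhh)r_k$, and that the sets $X_{\hhh\iii_b}$ all lie within distance $\asymp\diam(X_\hhh)r_k$ of one another. Since $a\in\bigcap_\iii\Omega_\iii$ we have $\sigma^{n}(\jjj)\in[\iii_1]$ for infinitely many $n$; for each such $n$ take $\hhh=\jjj|_n$, so that $a\in\pi([\hhh\iii_1])\subset X_{\hhh\iii_1}$ sits inside the transported cluster, all of whose $N$ pieces are then within $\asymp\diam(X_\hhh)r_k$ of $a$. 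Because the $\iii_b$ are incomparable elements of $Z(r_k)$, each meet $\iii_b\land\iii_{b'}$ is a prefix of $\iii_b^{-}$ and hence $\diam(X_{\iii_b\land\iii_{b'}})>r_k$, so after prepending $\hhh$ the branch scales of the cluster stay above $D^{-1}\diam(X_\hhh)r_k$. Choosing a comparison scale $\rho_n\asymp\diam(X_\hhh)r_k$ just below these branch scales, the $Z(\rho_n)$-representatives of the $\hhh\iii_b$ remain distinct and incomparable and each meets $B(a,C\rho_n)$, whence $\#Z(a,C\rho_n)\ge N$. As $\rho_n\to0$, this yields $\limsup_{r\downarrow0}\#Z(a,r)\ge N>\beta=\beta_a$, contradicting $a\in A$.

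The main obstacle is exactly this passage from an almost-everywhere bound to a contradiction at one fixed point, and it is here that tractability is indispensable: it is the only hypothesis guaranteeing that a geometrically tight cluster at scale $r_k$ stays tight, of relative size $\asymp\diam(X_\hhh)r_k$, after being transported into an arbitrarily deep cylinder $X_\hhh$, while the recurrence $a\in\bigcap_\iii\Omega_\iii$ is what makes the high clustering reappear at the \emph{same} point $a$ along a sequence of scales tending to zero. The remaining work is routine bookkeeping, namely the comparisons $\diam(X_{\hhh\iii})\asymp\diam(X_\hhh)\diam(X_\iii)$ and the matching of the ball radius with the defining scale of $Z(a,\cdot)$ when reindexing the transported pieces into $Z(\rho_n)$.
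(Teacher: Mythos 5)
Your proposal is correct in substance, but it closes the argument by a genuinely different mechanism than the paper, so let me compare. The shared engine is identical: negate uniform clustering, extract an incomparable cluster $\mathcal{Z}_k=\{\iii_1,\ldots,\iii_N\}\subset Z(r_k)$ with pairwise distances at most $r_k$ (your pigeonhole step is in fact a legitimate repair of a point the paper glosses over, since two sets meeting $B(x'_N,r'_N)$ are a priori only at distance at most $2r'_N$), and use tractability together with the recurrence sets $\Omega_\iii$ of Lemma \ref{thm:aareton_toisto} to reproduce the cluster at relative scale $\diam(X_\hhh)r_k$ inside arbitrarily deep cylinders around a recurrent point. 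The divergence is in the payoff. The paper never returns to counting symbols: from the transported cluster it reads off the lower density bound $m\bigl(B(x,r_n)\bigr)\ge C_0Nr_n^t$ at every $x\in\pi(\Omega_\iii)$, a set of full $m$-measure, and then \cite[Proposition 2.2(b)]{Falconer1997} together with $\HH^t\le c'm$ gives $\HH^t(E)\le 2^tC_0^{-1}N^{-1}\to 0$, contradicting $\HH^t(E)>0$ outright. You instead turn $0<\HH^t(E)<\infty$ into an almost-everywhere pointwise bound $\limsup_{r\downarrow 0}\#Z(x,r)<\infty$ on a set of positive $m$-measure (via Radon--Nikodym, Besicovitch differentiation and the upper density theorem for $\HH^t$) --- in effect implementing the heuristic the authors state just before defining tractability --- and then contradict that bound at a single recurrent point. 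This costs heavier measure-theoretic machinery plus one combinatorial step the paper avoids entirely, and that step is the only place where your text, as literally written, is false: ``$\#Z(a,C\rho_n)\ge N$'' does not follow, because your representatives $\kkk_b$ lie in $Z(\rho_n)$, whereas membership in $Z(a,C\rho_n)$ requires membership in the section $Z(C\rho_n)$. This is exactly the reindexing you flag as routine bookkeeping, and it is: replace each $\kkk_b$ by its ancestor in $Z(C\rho_n)$, whose set contains $X_{\kkk_b}$ and hence still meets $B(a,C\rho_n)$; by \ref{M2} and Lemma \ref{thm:rajanolla}, any element of $Z(C\rho_n)$ has at most $M_0$ descendants in $Z(\rho_n)$, with $M_0$ depending only on $D$, $C$, and $\min_{i\in I}\diam(X_i)$, so $\#Z(a,C\rho_n)\ge N/M_0$, which still beats $\beta_a$ since $N\to\infty$. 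With that repair your argument is complete; the paper's route is the more economical of the two, while yours isolates an intermediate statement of independent interest, namely that positive $\HH^t$-measure alone forces finite clustering almost everywhere, with tractability serving only to globalize it.
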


\begin{proof}
  Assume on the contrary that for each $N \in \N$ there are $x'_N \in
  E$ and $r'_N>0$ such that $\# Z(x'_N,r'_N) \ge N$. For fixed $N \in \N$
  choose $\iii \in Z(x'_N,r'_N)$ so that $x'_N = \pi(\iii\kkk_0)$ for
  some $\kkk_0 \in I^\infty$. We define
  \begin{equation*}
    \Omega_\iii = \{ \kkk \in I^\infty : \sigma^{n-1}(\kkk) \in [\iii]
    \text{ with infinitely many } n \in \N \}
  \end{equation*}
  and taking arbitrary $\kkk \in \Omega_\iii$ and $n \in \N$ for which
  $\sigma^n(\kkk) \in [\iii]$, we denote $x = \pi(\kkk)$ and $\hhh =
  \kkk|_n$. Finally, pick $\jjj_1,\ldots,\jjj_N \in Z(x'_N,r'_N)$ such
  that $\jjj_i \ne \jjj_j$ as $i \ne j$. Since now
  $\dist(X_\iii,X_{\jjj_i}) \le r'_N$ for every $i \in \{ 1,\ldots,N
  \}$, we have, according to the assumption, that
  $\dist(X_{\hhh\iii},X_{\hhh\jjj_i}) \le C\diam(X_\hhh)r'_N$. Hence
  \begin{align*}
    \pi([\hhh\jjj_i]) &\subset X_{\hhh\jjj_i} \subset B\bigl(
    x,\diam(X_{\hhh\iii}) + \dist(X_{\hhh\iii},X_{\hhh\jjj_i}) +
    \diam(X_{\hhh\jjj_i}) \bigr) \\
    &\subset B\bigl( x,(2D+C)\diam(X_\hhh)r'_N \bigr)
  \end{align*}
  for each $i \in \{ 1,\ldots,N \}$ recalling that $x \in
  X_{\hhh\iii}$. Therefore
  \begin{equation*}
    \pi\biggl( \bigcup_{i=1}^N [\hhh\jjj_i] \biggr) \subset B(x,r_n),
  \end{equation*}
  where $r_n = (2D+C)\diam(X_{\kkk|_n})r'_N$, and
  \begin{align*}
    \frac{m\bigl( B(x,r_n) \bigr)}{r_n^t} &\ge \frac{\sum_{i=1}^N
    \mu([\hhh\jjj_i])}{r_n^t} \ge \frac{c^{-1} \sum_{i=1}^N
    \diam(X_{\hhh\jjj_i})^t}{r_n^t} \\
    &\ge \frac{c^{-1}D^{-t} \diam(X_\hhh)^t \sum_{i=1}^N
    \diam(X_{\jjj_i})^t}{(2D+C)^t\diam(X_\hhh)^t r'_N}
    \ge C_0N,
  \end{align*}
  where $\mu$ is the measure of Proposition \ref{thm:semileikkaus}, $m
  = \mu \circ \pi^{-1}$, and the constant $C_0>0$ does not depend on
  $n$ or $N$. Since $r_n \downarrow 0$ as $n \to \infty$, we obtain
  \begin{equation*}
    \limsup_{r \downarrow 0} \frac{m\bigl( B(x,r) \bigr)}{r^t} \ge C_0N
  \end{equation*}
  for all $x \in \pi(\Omega_\iii)$, which, according to
  \cite[Proposition 2.2(b)]{Falconer1997}, gives
  \begin{equation} \label{eq:hh_arvio}
    \HH^t\bigl( \pi(\Omega_\iii) \bigr) \le 2^tC_0^{-1}N^{-1} m\bigl(
    \pi(\Omega_\iii) \bigr).
  \end{equation}
  Since $1 = \mu(\Omega_\iii) \le m\bigl( \pi(\Omega_\iii) \bigr) \le
  1$ by Lemma \ref{thm:aareton_toisto}, we have, using
  \eqref{eq:hh_arvio} and Proposition \ref{thm:semileikkaus},
  \begin{align*}
    \HH^t(E) &\le \HH^t\bigl( \pi(\Omega_\iii) \bigr) + \HH^t\bigl( E
    \setminus \pi(\Omega_\iii) \bigr) \\
    &\le 2^tC_0^{-1}N^{-1} m\bigl( \pi(\Omega_\iii) \bigr) + c'm\bigl( E
    \setminus \pi(\Omega_\iii) \bigr) \le 2^tC_0^{-1}N^{-1},
  \end{align*}
  which leads to a contradiction as $N \to \infty$.
\end{proof}

To summarize the implications of the previous theorem, we finish this
section with the following corollary.

\begin{corollary} \label{thm:tractable_corollary}
  For a tractable CMC, the following are equivalent:
  \begin{enumerate}
  \item The ball condition.
  \item The uniform ball condition.
  \item $\HH^t(E) > 0$, where $P(t)=0$.
  \item There exist constants $r_0>0$ and $K \ge 1$ such that
    \begin{equation*}
      K^{-1}r^t \le \HH^t|_E\bigl( B(x,r) \bigr) \le Kr^t
    \end{equation*}
    whenever $x \in E$, $0<r<r_0$, and $P(t)=0$.
  \end{enumerate}
\end{corollary}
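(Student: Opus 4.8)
The plan is to prove the equivalence by chaining together the results already established in this section, so that the entire argument reduces to bookkeeping: deciding which earlier theorem feeds which implication. Throughout, $t$ denotes the unique zero of the pressure furnished by Proposition~\ref{thm:semileikkaus}, and I would freely invoke Theorem~\ref{thm:bounded_separation} to pass between the (uniform) ball condition and the (uniform) finite clustering property without further comment. It is convenient to arrange the implications as the cycle $(1)\Rightarrow(3)\Rightarrow(2)\Rightarrow(4)\Rightarrow(3)$ together with the trivial back-edge $(2)\Rightarrow(1)$; this suffices to make all four conditions mutually equivalent.

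First I would dispose of the two trivial steps. The implication $(2)\Rightarrow(1)$ is immediate, since the uniform ball condition is a ball condition in which $r_0$ may be taken infinite. Likewise $(4)\Rightarrow(3)$ is immediate: if the two-sided estimate in (4) holds then $\HH^t(E)\ge\HH^t|_E(B(x,r))\ge K^{-1}r^t>0$ for any fixed $x\in E$ and $0<r<r_0$.

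Next comes $(1)\Rightarrow(3)$. By Theorem~\ref{thm:bounded_separation} the ball condition yields the finite clustering property, and Remark~\ref{rem:bounded_hausdorff} then gives $0<\HH^t(E)<\infty$; in particular $\HH^t(E)>0$. The implication $(2)\Rightarrow(4)$ is of the same flavour: the uniform ball condition is the uniform finite clustering property, so Theorem~\ref{thm:regular} produces constants $r_0,K$ with $K^{-1}r^t\le m(B(x,r))\le Kr^t$ for the measure $m$ of Proposition~\ref{thm:semileikkaus}, and Remark~\ref{rem:bounded_hausdorff} lets me replace $m$ by $\HH^t|_E$ after adjusting $K$, which is precisely (4).

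The one step where tractability is genuinely used---and the step I expect to carry the weight of the theorem---is $(3)\Rightarrow(2)$. Here I would assume $\HH^t(E)>0$ and apply Theorem~\ref{thm:hausdorff_bounded}, whose hypothesis of tractability is exactly what converts the mere positivity of the Hausdorff measure into a uniform bound on $\#Z(x,r)$, that is, the uniform finite clustering property; Theorem~\ref{thm:bounded_separation} then delivers the uniform ball condition. With this, the cycle $(1)\Rightarrow(3)\Rightarrow(2)\Rightarrow(4)\Rightarrow(3)$ closes, and combined with $(2)\Rightarrow(1)$ every one of the four conditions implies every other. The main obstacle is not any single computation but rather ensuring the logistics are sound: in particular that the uniform (rather than merely non-uniform) clustering property is available before invoking Theorem~\ref{thm:regular} for the two-sided density bound in (4), which is why the argument must route through $(3)\Rightarrow(2)$ rather than attempting to reach (4) directly from the non-uniform finite clustering property.
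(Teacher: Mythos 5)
Your proposal is correct and matches the paper's intended argument: the paper gives no separate proof, presenting the corollary explicitly as a summary of Theorem~\ref{thm:hausdorff_bounded} together with Theorems~\ref{thm:bounded_separation} and \ref{thm:regular} and Remark~\ref{rem:bounded_hausdorff}, which is precisely the chain you assemble. Your implication graph closes correctly, and you rightly identify that tractability enters only in $(3)\Rightarrow(2)$ via Theorem~\ref{thm:hausdorff_bounded} and that the two-sided bound in (4) must be routed through the uniform clustering property before invoking Theorem~\ref{thm:regular}.
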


\section{Semiconformal Moran construction} \label{sec:congruent}

In a tractable CMC, we require that the relative positions of the sets
$X_\iii$, $\iii \in I^*$, follow the rule given in
\eqref{eq:tractable}. The only restriction for the shapes of these sets
comes from \ref{M2} and \ref{M3}. Assuming more on the shape, we are
able to prove that the Hausdorff dimension and the upper Minkowski
dimension of the limit set coincide and if the uniform ball
condition is satisfied then the dimension of the intersection of
incomparable cylinder sets is small. We say that a CMC is
\emph{semiconformal} if there is a constant $C^* \ge 1$ such that
\begin{equation} \label{eq:congru2}
  \frac{\dist(X_{\hhh\iii}, X_{\hhh\jjj})}{\diam(X_\hhh)} \le
  C^*\frac{\dist(X_{\kkk\iii}, X_{\kkk\jjj})}{\diam(X_\kkk)}
\end{equation}
whenever $\hhh,\kkk,\iii,\jjj \in I^*$. This property implies that the
limit set is ``approximately self-similar''.
Observe that \eqref{eq:congru2} is equivalent to the existence of a
constant $C \ge 1$ for which
\begin{equation} \label{eq:congruent}
%\begin{split}
  C^{-1}\diam(X_\hhh)\dist(X_\iii,X_\jjj) \le
  \dist(X_{\hhh\iii},X_{\hhh\jjj}) \le
  C\diam(X_\hhh)\dist(X_\iii,X_\jjj)
%\end{split}
\end{equation}
whenever $\hhh,\iii,\jjj \in I^*$.
We notice immediately that a semiconformal CMC is tractable which
indicates, for example, that the
finite clustering property and the uniform finite clustering
property are equivalent.

Let us first introduce natural mappings for a semiconformal CMC.

\begin{lemma} \label{thm:kuvaukset}
  If a CMC is semiconformal then for each $\iii \in I^*$ there exists a
  mapping $\fii_\iii \colon E \to E$ such that $\fii_\iii\bigl(
  \pi(\hhh) \bigr) = \pi(\iii\hhh)$ as $\hhh \in I^\infty$ and
  \begin{equation*}
    C^{-1}\diam(X_\iii)|x-y| \le |\fii_\iii(x) - \fii_\iii(y)| \le
    C\diam(X_\iii)|x-y|
  \end{equation*}
  whenever $x,y \in E$.
\end{lemma}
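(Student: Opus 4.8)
The plan is to define $\fii_\iii$ by the prescribed formula and to extract its well-definedness, the inclusion $\fii_\iii(E)\subset E$, and the two-sided estimate all at once from a single elementary geometric identity combined with the equivalent form \eqref{eq:congruent} of semiconformality. First I would record that for all $\hhh,\kkk\in I^\infty$,
\[
  |\pi(\hhh)-\pi(\kkk)| = \lim_{n\to\infty}\dist(X_{\hhh|_n},X_{\kkk|_n}).
\]
Writing $x=\pi(\hhh)$ and $y=\pi(\kkk)$, the inclusions $x\in X_{\hhh|_n}$ and $y\in X_{\kkk|_n}$ give $\dist(X_{\hhh|_n},X_{\kkk|_n})\le|x-y|$; choosing points of the compact sets $X_{\hhh|_n}$ and $X_{\kkk|_n}$ realizing their distance and applying the triangle inequality gives the reverse bound $|x-y|\le \dist(X_{\hhh|_n},X_{\kkk|_n})+\diam(X_{\hhh|_n})+\diam(X_{\kkk|_n})$. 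Since $\diam(X_{\hhh|_n})$ and $\diam(X_{\kkk|_n})$ tend to $0$ by Lemma \ref{thm:rajanolla}, the two bounds squeeze and the identity follows.

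With the identity in hand, I would invoke \eqref{eq:congruent} with $\iii$ in the role of the common prefix and $\hhh|_n,\kkk|_n$ as the two inner symbols, obtaining for every $n$
\[
  C^{-1}\diam(X_\iii)\dist(X_{\hhh|_n},X_{\kkk|_n}) \le \dist(X_{\iii\hhh|_n},X_{\iii\kkk|_n}) \le C\diam(X_\iii)\dist(X_{\hhh|_n},X_{\kkk|_n}).
\]
Here $X_{\iii\hhh|_n}=X_{(\iii\hhh)|_{|\iii|+n}}$, so applying the identity to the pair $\iii\hhh,\iii\kkk$ shows that the middle term converges to $|\pi(\iii\hhh)-\pi(\iii\kkk)|$ as $n\to\infty$. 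Applying this machinery first to a pair $\hhh,\hhh'$ with $\pi(\hhh)=\pi(\hhh')$ settles well-definedness: then $\dist(X_{\hhh|_n},X_{\hhh'|_n})\to 0$ by the identity, so the upper inequality forces $\dist(X_{\iii\hhh|_n},X_{\iii\hhh'|_n})\to 0$, whence $\pi(\iii\hhh)=\pi(\iii\hhh')$. Thus $\fii_\iii(\pi(\hhh))=\pi(\iii\hhh)$ unambiguously defines a map $E\to\pi(I^\infty)=E$.

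The estimate then drops out by letting $n\to\infty$ in the displayed double inequality: the outer terms converge to $C^{-1}\diam(X_\iii)|x-y|$ and $C\diam(X_\iii)|x-y|$, and the middle term to $|\fii_\iii(x)-\fii_\iii(y)|$, giving
\[
  C^{-1}\diam(X_\iii)|x-y| \le |\fii_\iii(x)-\fii_\iii(y)| \le C\diam(X_\iii)|x-y|
\]
for $x=\pi(\hhh)$, $y=\pi(\kkk)$. The only genuinely geometric step, and hence the main point to get right, is the identity of the first paragraph, namely that the Euclidean distance between two limit points is recovered in the limit from the distances of the nested generating sets; this is where the nesting \ref{M1} (via compactness) and the vanishing of diameters (Lemma \ref{thm:rajanolla}) enter. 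Everything else is a direct substitution into \eqref{eq:congruent}, and I expect no further obstacle beyond noting that the resulting constant $C$ is uniform in $\iii$, which holds because the constant in \eqref{eq:congruent} is independent of the symbols.
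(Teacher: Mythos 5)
Your proposal is correct and is essentially the paper's own argument in a slightly reorganized form: the identity $|\pi(\hhh)-\pi(\kkk)|=\lim_{n\to\infty}\dist(X_{\hhh|_n},X_{\kkk|_n})$ you isolate is exactly the pair of triangle-inequality estimates the paper interleaves with its $\eps$-argument, and both proofs then conclude by substituting \eqref{eq:congruent} with $\iii$ as the common prefix, the upper bound giving well-definedness and the two-sided bound giving the bi-Lipschitz estimate with a constant uniform in $\iii$. No gaps.
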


\begin{proof}
  Fix $\iii \in I^*$ and $\hhh,\kkk \in I^\infty$. Take $\eps>0$ and
  using Lemma \ref{thm:rajanolla}, choose $n \in \N$ such that
  $\diam(X_{\iii(\hhh|_n)}) + \diam(X_{\iii(\kkk|_n)}) < \eps$. Now,
  using \eqref{eq:congruent}, we have
  \begin{equation} \label{eq:onkuvaus}
  \begin{split}
    |\pi(\iii\hhh) - \pi(\iii\kkk)| &\le \diam(X_{\iii(\hhh|_n)}) +
    \dist(X_{\iii(\hhh|_n)},X_{\iii(\kkk|_n)}) +
    \diam(X_{\iii(\kkk|_n)}) \\
    &\le C\diam(X_\iii)\dist(X_{\hhh|_n},X_{\kkk|_n}) + \eps \\
    &\le C\diam(X_\iii)|\pi(\hhh) - \pi(\kkk)| + \eps.
  \end{split}
  \end{equation}
  On the other hand,
  choosing $n \in \N$ such that $\diam(X_{\hhh|_n}) +
  \diam(X_{\kkk|_n}) < \eps$, we get similarly
  \begin{align*}
    |\pi(\iii\hhh) - \pi(\iii\kkk)| &\ge
    \dist(X_{\iii(\hhh|_n)},X_{\iii(\kkk|_n)}) \\
    &\ge C^{-1}\diam(X_\iii)\dist(X_{\hhh|_n},X_{\kkk|_n}) \\
    &\ge C^{-1}\diam(X_\iii)\bigl( |\pi(\hhh) - \pi(\kkk)| -
    \diam(X_{\hhh|_n}) - \diam(X_{\kkk|_n}) \bigr) \\
    &\ge C^{-1}\diam(X_\iii)|\pi(\hhh) - \pi(\kkk)| -
    C^{-1}\diam(X_\iii)\eps.
  \end{align*}
  The claim follows now by letting $\eps \downarrow 0$ since
  according to \eqref{eq:onkuvaus}, we may define a mapping $\fii_\iii
  \colon E \to E$ by setting $\fii_\iii\bigl( \pi(\hhh) \bigr) =
  \pi(\iii\hhh)$ as $\hhh \in I^\infty$.
\end{proof}

It follows that the measure of Proposition \ref{thm:semileikkaus} is
semiconformal on a semiconformal CMC satisfying the finite clustering
property in the following sense.

\begin{lemma} \label{thm:nollaleikkaus}
  If a semiconformal CMC satisfies the finite
  clustering property, $P(t)=0$, and $m$ is the measure of Proposition
  \ref{thm:semileikkaus} then
  \begin{equation*}
    m\bigl( \fii_\iii(E) \cap \fii_\jjj(E) \bigr) = 0
  \end{equation*}
  whenever $\iii \bot \jjj$. Here $\fii_\iii$, $\iii \in I^*$, are the
  mappings of Lemma \ref{thm:kuvaukset}.
\end{lemma}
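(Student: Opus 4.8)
The plan is to relate the sets $\fii_\iii(E)$ to the sets $X_\iii$ and then invoke the semiconformal measure property already established in Proposition \ref{thm:semileikkaus}. First I would observe that by the definition of $\fii_\iii$ in Lemma \ref{thm:kuvaukset}, we have $\fii_\iii(E) = \fii_\iii(\pi(I^\infty)) = \pi([\iii]) \subset X_\iii$, since $\fii_\iii(\pi(\hhh)) = \pi(\iii\hhh)$ and $\pi(\iii\hhh) \in X_{\iii}$ by the nesting \ref{M1}. Thus it suffices to control $m(X_\iii \cap X_\jjj)$ for incomparable $\iii,\jjj$. The natural route is to show that the CMC satisfies the hypotheses of the last claim in Proposition \ref{thm:semileikkaus}, namely the bounded overlapping property together with the projection condition $\pi(\iii\hhh) \in X_{\iii\jjj}$ whenever $\pi(\hhh) \in X_\jjj$; this would immediately give $m(X_\iii \cap X_\jjj)=0$ for $\iii \bot \jjj$, and hence $m(\fii_\iii(E) \cap \fii_\jjj(E)) \le m(X_\iii \cap X_\jjj) = 0$.

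The two ingredients therefore to be verified are the projection condition and the bounded overlapping property. For the projection condition, I would argue directly from the contraction estimates: if $\pi(\hhh) \in X_\jjj$, then writing $\hhh = \kkk\gggg$ with an appropriate splitting and using the distance/diameter comparisons in \eqref{eq:congruent} that characterize semiconformality, one sees that $\pi(\iii\hhh)$ lands in $X_{\iii\jjj}$ because the relative position of the images under prepending $\iii$ is governed by $\diam(X_\iii)$ in a uniformly comparable way. For the bounded overlapping property, I would combine two facts available in the excerpt: a semiconformal CMC is tractable (stated immediately after \eqref{eq:congruent}), and for a tractable CMC the finite clustering property is equivalent to the (non-uniform) ball condition by Theorem \ref{thm:bounded_separation}. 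Then Lemma \ref{thm:technical_lemma} converts the finite clustering property into the bounded overlapping property, provided the technical hypothesis $X_\iii \cap E = \pi([\iii])$ holds for each $\iii$.

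The step I expect to be the main obstacle is verifying the technical hypothesis $X_\iii \cap E = \pi([\iii])$ needed to apply Lemma \ref{thm:technical_lemma}. The inclusion $\pi([\iii]) \subset X_\iii \cap E$ is immediate from \ref{M1}, but the reverse inclusion $X_\iii \cap E \subset \pi([\iii])$ — that every limit point lying in $X_\iii$ actually comes from a symbol beginning with $\iii$ — is exactly where the geometry can fail without an extra assumption, and is closely tied to the projection condition discussed above. I would handle this by showing that semiconformality forces the sets $X_\jjj$ with $\jjj \bot \iii$ to recede from $X_\iii$ at the scale dictated by $\diam(X_\iii)$, so that any $x \in X_\iii \cap E$ must be approximated by cylinders nested inside $X_\iii$. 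Once both the projection condition and $X_\iii \cap E = \pi([\iii])$ are secured, the bounded overlapping property follows through the chain Theorem \ref{thm:bounded_separation} $\to$ Lemma \ref{thm:technical_lemma}, and the final claim is then a one-line consequence of Proposition \ref{thm:semileikkaus} together with the containment $\fii_\iii(E) \subset X_\iii$.
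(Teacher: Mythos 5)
Your reduction goes through a statement that is false in the generality you need. You reduce the lemma to showing $m(X_\iii \cap X_\jjj) = 0$ for incomparable $\iii,\jjj$, to be obtained from the last claim of Proposition \ref{thm:semileikkaus} applied to the original CMC $\{ X_\iii : \iii \in I^* \}$; this in turn requires the bounded overlapping property and the hypothesis $X_\iii \cap E = \pi([\iii])$ of Lemma \ref{thm:technical_lemma}. But Example \ref{ex:ekaesim} of the paper exhibits a semiconformal CMC (a similitude IFS with $X = [0,3]$) satisfying the uniform finite clustering property for which all of this fails: the point $1$ lies in $X_{1^k0} \cap E$ for every $k \in \N$ while $1 \notin \pi([1^k0])$, so the hypothesis $X_\iii \cap E = \pi([\iii])$ fails; the bounded overlapping property fails (the incomparable family $\{ 1^k0 : k \in \N \}$ witnesses this); and indeed your intermediate target is simply false there, since $\pi([1^{k+1}]) \subset X_{1^k0} \cap X_{1^{k+1}0}$ gives $m(X_{1^k0} \cap X_{1^{k+1}0}) \ge 2^{-(k+1)} > 0$ for the incomparable pair $1^k0 \bot 1^{k+1}0$. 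Consequently the step you yourself flag as the main obstacle --- that semiconformality forces the sets $X_\jjj$ with $\jjj \bot \iii$ to recede from $X_\iii$ --- cannot be carried out: \eqref{eq:congruent} controls the mutual distances of the sets $X_\iii$ but does nothing to prevent an ambient set $X_\jjj$ from containing large parts of $E$ lying outside $\pi([\jjj])$.

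The paper's proof avoids this by changing the construction rather than the target. Since $\fii_\iii(E) = \pi([\iii])$ and $\diam\bigl( \fii_\iii(E) \bigr)$ is comparable to $\diam(X_\iii)$ by Lemma \ref{thm:kuvaukset}, the collection $\{ \fii_\iii(E) : \iii \in I^* \}$ is itself a CMC with the same topological pressure, hence (by uniqueness of the invariant semiconformal measure) with the same measure $\mu$ on $I^\infty$; it inherits the finite clustering property, and for it the troublesome hypothesis is trivial, as $\fii_\iii(E) \cap E = \pi([\iii])$. Lemma \ref{thm:technical_lemma} then yields the bounded overlapping property for this new CMC, the projection condition ($\pi(\iii\hhh) \in \fii_{\iii\jjj}(E)$ whenever $\pi(\hhh) \in \fii_\jjj(E)$) is immediate from the identity $\pi(\iii\hhh) = \fii_\iii\bigl( \pi(\hhh) \bigr)$, and Proposition \ref{thm:semileikkaus} applied to the new CMC gives exactly $m\bigl( \fii_\iii(E) \cap \fii_\jjj(E) \bigr) = 0$. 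Your outline can be salvaged by replacing $\{ X_\iii \}$ with $\{ \pi([\iii]) \}$ throughout; working with the original sets $X_\iii$ is not repairable.
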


\begin{proof}
  Since Lemma \ref{thm:kuvaukset} clearly implies that $\diam\bigl(
  \fii_\iii(E) \bigr)$ is proportional to 
  $\diam(X_\iii)$, the CMC formed by the sets $\fii_\iii(E)$, $\iii
  \in I^*$, has the same topological pressure as the original
  CMC. Notice that $\diam(E)>0$ by the finite clustering property. By
  the uniqueness of the invariant semiconformal measure on $I^\infty$,
  also the semiconformal measures determined by these CMC's on
  $I^\infty$ are the same. Noting that the finite clustering property
  remains satisfied in the new setting and trivially $\fii_\iii(E) \cap E
  = \pi([\iii])$ 
  for each $\iii \in I^*$, Lemma \ref{thm:technical_lemma} implies the
  bounded overlapping property. By the semiconformality, it is evident that
  for each $\iii,\jjj \in I^*$ and $\hhh \in I^\infty$ it holds that
  $\pi(\iii\hhh) \in \fii_{\iii\jjj}(E)$ whenever $\pi(\hhh) \in
  \fii_\jjj(E)$ and hence
  Proposition \ref{thm:semileikkaus} completes the proof.
\end{proof}

Using the mappings of Lemma \ref{thm:kuvaukset}, we are able to prove
that the Hausdorff dimension
and the upper Minkowski dimension of the limit set of a semiconformal CMC
coincide even without assuming the ball condition.

\begin{theorem} \label{thm:sama_dim}
  If a CMC is semiconformal and $t=\dimh(E)$ then $\dimm(E)=t$ and
  $\HH^t(E)<\infty$.
\end{theorem}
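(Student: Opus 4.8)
Since $\dimh(E)\le\dimm(E)$ holds for every nonempty bounded set, it suffices to prove the reverse inequality $\dimm(E)\le t$ together with $\HH^t(E)<\infty$. Everything rests on the fact that a semiconformal CMC makes $E$ uniformly approximately self-similar: by Lemma~\ref{thm:kuvaukset} every small ball centred on $E$ contains a bi-Lipschitz copy of $E$ whose contraction ratio is comparable to the radius of the ball. My plan is to isolate this geometric feature and then run a packing and mass-distribution argument on it.

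First I would make the self-similarity quantitative. Fix $x\in E$, write $x=\pi(\iii)$ with $\iii\in I^\infty$, and for small $\rho>0$ let $n$ be the least index with $X_{\iii|_n}\subseteq B(x,\rho)$. Minimality forces $\diam(X_{\iii|_{n-1}})\ge\rho$, so \ref{M2} gives $\diam(X_{\iii|_n})\ge D^{-1}\rho\min_{i\in I}\diam(X_i)$; hence Lemma~\ref{thm:kuvaukset} shows that $\fii_{\iii|_n}$ maps $E$ into $E\cap B(x,\rho)$ and satisfies $|\fii_{\iii|_n}(y)-\fii_{\iii|_n}(z)|\ge a\rho|y-z|$ for all $y,z\in E$, where $a>0$ is independent of $x$ and $\rho$. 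Thus $E$ carries, in every sufficiently small ball, a copy of itself expanded from below by a factor proportional to the radius.

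Next I would use these copies twice. Let $N(\rho)$ denote the largest number of disjoint $\rho$-balls centred on $E$. Placing a copy inside each ball of a maximal packing and then packing each copy yields a super-multiplicative bound of the form $N(a\rho\sigma/2)\ge N(\rho)N(\sigma)$; a Fekete-type argument then shows that the lower and upper box dimensions of $E$ agree, so that $\dimm(E)=:s$, and, more importantly, that $N(\rho)\le K\rho^{-s}$ for all small $\rho$. Since a maximal $\rho$-packing produces a cover of $E$ by at most $N(\rho)$ balls of radius $2\rho$, this last bound gives $\HH^s(E)<\infty$ at once. To identify $s$ with $\dimh(E)$ I would build a Frostman measure on the tree of nested copies: fixing a base scale $\rho_0$ with $N(\rho_0)=M$, I place the $M$ mutually separated subcopies inside each copy and spread mass uniformly, obtaining a measure $\mu$ supported on a Cantor subset of $E$ whose level-$k$ pieces are separated by at least a constant multiple of $(c\rho_0)^k$. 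The separation means that a ball of radius $r$ with $(c\rho_0)^k\approx r$ meets only boundedly many level-$k$ pieces, so $\mu(B(x,r))\le Cr^{s-\eps}$ and hence $\dimh(E)\ge s-\eps$ for every $\eps>0$; letting $\rho_0$ run along a sequence realising $\dimm(E)$ yields $\dimh(E)\ge\dimm(E)$. Combined with $\dimh(E)\le\dimm(E)$ and the finiteness above, this proves $\dimm(E)=\dimh(E)=t$ and $\HH^t(E)<\infty$.

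I expect the mass-distribution step to be the main obstacle. The super-multiplicative estimate controls only the \emph{number} of separated copies, so the delicate point is to organise the nested copies into a genuine measure obeying a uniform upper regularity bound: one must verify that the separation of the subcopies survives under iteration, so that level-$k$ pieces are separated on the scale of the $k$-th power of the base scale, and that the limiting measure really charges $E$. Only the lower Lipschitz estimate of Lemma~\ref{thm:kuvaukset} is available for this, which is precisely why the construction has to be phrased in terms of packings rather than a self-referential covering of $E$. By contrast, the super-multiplicativity inequality and the passage from $N(\rho)\le K\rho^{-s}$ to $\HH^s(E)<\infty$ are routine once the copies have been located.
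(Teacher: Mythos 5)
Your proposal is correct and follows essentially the paper's own route: the paper verifies exactly the same quasi-self-similarity property, choosing for $x_0=\pi(\hhh)\in E$ and small $r>0$ the index $n$ with $\hhh|_n\in Z\bigl(C^{-1}\diam(E)^{-1}r\bigr)$ and using Lemma~\ref{thm:kuvaukset} to produce $g\colon E\to E\cap B(x_0,r)$ with $|g(x)-g(y)|\ge ar|x-y|$, and then concludes at once by citing \cite[Theorem 4]{Falconer1989}. Your second half --- the super-multiplicative packing inequality, the Fekete argument yielding the uniform bound $N(\rho)\le K\rho^{-s}$ (hence $\HH^s(E)<\infty$), and the mass-distribution construction giving $\dimh(E)\ge s-\eps$ --- is a correct sketch of the standard proof of that cited implicit-dimension theorem, so you have simply inlined the black box the paper invokes.
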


\begin{proof}
  We may assume that $\diam(E)>0$.
  Let $\fii_\iii$, $\iii \in I^*$, be the mappings of Lemma
  \ref{thm:kuvaukset}. Notice that, using \ref{M2}, there exists a
  constant $\delta>0$ such that
  \begin{equation}
    \label{eq:delta-arvio}
    \diam(X_{\iii i}) \ge \delta \diam(X_\iii)
  \end{equation}
  whenever $\iii \in I^*$ and $i \in I$. Take $x_0 \in E$, $\hhh \in
  I^\infty$ such that $x_0=\pi(\hhh)$, and $0<r<C\diam(E)^2$.
  Then choose $n \in \N$ such that $\hhh|_n \in Z\bigl( C^{-1}
  \diam(E)^{-1} r \bigr)$. Since $x_0 = \fii_{\hhh|_n}\bigl(
  \pi(\sigma^n(\hhh)) \bigr)$, we have
  \begin{align*}
    |x_0 - \fii_{\hhh|_n}(y)| &\le C\diam(X_{\hhh|_n}) \bigl|
    \pi\bigl( \sigma^n(\hhh) \bigr) - y \bigr| \\
    &\le C\diam(X_{\hhh|_n})\diam(E) < r
  \end{align*}
  for every $y \in E$. Hence
  \begin{equation*}
    \fii_{\hhh|_n}(E) \subset E \cap B(x_0,r).
  \end{equation*}
  On the other hand, using \eqref{eq:delta-arvio},
  \begin{align*}
    |\fii_{\hhh|_n}(x) - \fii_{\hhh|_n}(y)| &\ge
    C^{-1}\diam(X_{\hhh|_n})|x-y| \\
    &\ge C^{-2}\diam(E)^{-1}\delta r|x-y|
  \end{align*}
  whenever $x,y \in E$. Therefore for each $x_0 \in E$ and
  $0<r<C\diam(E)^2$ there is a mapping $g \colon E \to E \cap
  B(x_0,r)$ and a constant $a=C^{-2}\diam(E)^{-1}\delta$ such that
  \begin{equation*}
    |g(x) - g(y)| \ge ar|x-y|
  \end{equation*}
  whenever $x,y \in E$. The claim follows now from \cite[Theorem
  4]{Falconer1989}.
\end{proof}

The following simple proposition shows the bi-Lipschitz invariance of
a semiconformal CMC. Therefore the collection of all semiconformal
CMC's is sufficiently large. Observe that despite of this property
the geometry of the limit set may change a lot under a bi-Lipschitz
map, see \cite[Lemma 3.2]{MartinMattila2000}.

\begin{proposition}
  If $\{ X_\iii : \iii \in I^* \}$ is a semiconformal CMC with $E$ as a
  limit set and $h \colon \R^d \to \R^d$ is a bi-Lipschitz mapping
  then $\{ h(X_\iii) : \iii \in I^* \}$ is a semiconformal CMC with $h(E)$
  as a limit set.
\end{proposition}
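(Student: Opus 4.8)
The plan is to fix once and for all a constant $L \ge 1$ with
\[
  L^{-1}|x-y| \le |h(x) - h(y)| \le L|x-y|
\]
for all $x,y \in \R^d$, and to reduce everything to two elementary distortion estimates. First, taking suprema in the definition of the diameter gives $L^{-1}\diam(A) \le \diam(h(A)) \le L\diam(A)$ for every bounded $A \subset \R^d$. Second, applying the two pointwise inequalities above inside the infimum defining the distance yields $L^{-1}\dist(A,B) \le \dist(h(A),h(B)) \le L\dist(A,B)$ for all $A,B \subset \R^d$. Since $h$ is continuous, each $h(X_\iii)$ is compact, and the lower diameter estimate shows $\diam(h(X_\iii)) \ge L^{-1}\diam(X_\iii)>0$, so $\{ h(X_\iii) : \iii \in I^* \}$ is a collection of compact sets of positive diameter.

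Next I would verify \ref{M1}--\ref{M3} for the new collection. Condition \ref{M1} is immediate, as $X_{\iii i} \subset X_\iii$ forces $h(X_{\iii i}) \subset h(X_\iii)$. For \ref{M2}, combining the diameter estimate (used once on the numerator and once on each factor of the denominator) with \ref{M2} for the original CMC shows that the ratio $\diam(h(X_{\iii\jjj}))/(\diam(h(X_\iii))\diam(h(X_\jjj)))$ lies between $L^{-3}D^{-1}$ and $L^3 D$; thus $D' = L^3 D$ works. For \ref{M3}, Lemma \ref{thm:rajanolla} gives $\max_{\iii \in I^n}\diam(X_\iii) \to 0$, whence $\max_{\iii \in I^n}\diam(h(X_\iii)) \le L\max_{\iii \in I^n}\diam(X_\iii) \to 0$, so for $n$ large enough the maximum is below $(D')^{-1}$.

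For semiconformality I would work with the equivalent formulation \eqref{eq:congruent}. Applying the distance estimate to $\dist(h(X_{\hhh\iii}),h(X_{\hhh\jjj}))$, then \eqref{eq:congruent} for the original CMC, and finally the diameter and distance estimates in the reverse direction to replace $\diam(X_\hhh)$ and $\dist(X_\iii,X_\jjj)$ by their $h$-images, bounds $\dist(h(X_{\hhh\iii}),h(X_{\hhh\jjj}))$ above by $L^3 C\,\diam(h(X_\hhh))\dist(h(X_\iii),h(X_\jjj))$ and, symmetrically, below by $L^{-3}C^{-1}$ times the same quantity; so $C' = L^3 C$ gives \eqref{eq:congruent} for the new collection, which is therefore a semiconformal CMC.

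Finally I would identify the limit set. Writing $\pi$ for the projection of the original CMC, I claim the projection $\pi'$ of the new CMC equals $h \circ \pi$: for $\iii \in I^\infty$ the point $h(\pi(\iii))$ lies in every $h(X_{\iii|_n})$, these sets are nested by \ref{M1} and have diameters at most $L\diam(X_{\iii|_n}) \to 0$, so their intersection is the single point $h(\pi(\iii))$, i.e.\ $\pi'(\iii) = h(\pi(\iii))$. Taking images over $I^\infty$ gives the new limit set $\pi'(I^\infty) = h(\pi(I^\infty)) = h(E)$. None of the steps presents a genuine obstacle; the only points demanding a little care are checking that the two distortion estimates hold with the same constant $L$ in both directions (in particular that $\dist$, being an infimum, is controlled on both sides) and verifying that the diameters of the new cylinders still shrink to zero so that $\pi'$ is well defined.
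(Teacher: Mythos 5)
Your proof is correct and takes essentially the same route as the paper's: fix the bi-Lipschitz constants, push the diameter and distance estimates through $h$, and verify \ref{M1}--\ref{M3} and \eqref{eq:congruent} with adjusted constants. You simply make explicit what the paper leaves implicit, namely the constants $D'=L^3D$ and $C'=L^3C$ and the identification $\pi' = h\circ\pi$ giving $h(E)$ as the new limit set.
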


\begin{proof}
  Fix constants $a,b > 0$ such that
  \begin{equation*}
    a|x-y| \le |h(x) - h(y)| \le b|x-y|
  \end{equation*}
  for every $x,y \in X$. The condition \ref{M1} is clearly satisfied
  and since $a\diam(X_\iii) \le \diam\bigl( h(X_\iii) \bigr) \le
  b\diam(X_\iii)$ as $\iii \in I^*$ and $a\dist(X_\iii,X_\jjj) \le
  \dist\bigl( h(X_\iii),h(X_\jjj) \bigr) \le b\dist(X_\iii,X_\jjj)$ as
  $\iii,\jjj \in I^*$, also the conditions \ref{M2}, \ref{M3}, and
  \eqref{eq:congruent} are satisfied. The proof is finished.
\end{proof}

Examining the method used in \cite[Theorem 2.1]{Schief1994}, one is
%easily convinced that it is useful to consider the set of symbols $W$
easily convinced by the usefulness of the set of symbols $W$
defined by
\begin{equation} \label{eq:tuplawee}
\begin{split}
  W(\iii) = \bigl\{ \jjj \in I^* : \;\,&\jjj' \in
  Z\bigl( \diam(X_{\iii'}) \bigr) \text{ and
  } \\ &\dist(X_{\iii'},X_{\jjj'}) \le 3\diam(X_{\iii'}),
  \text{ where } \\ &\iii' = \sigma^{|\iii\land\jjj|}(\iii) \text{ and }
  \jjj' = \sigma^{|\iii\land\jjj|}(\jjj) \bigr\}
\end{split}
\end{equation}
as $\iii \in I^*$. See also \cite[\S 2]{LauRaoYe2001} and \cite[\S
3]{PeresRamsSimonSolomyak2001}.
Notice that $\iii \in W(\iii)$.
The constant $3$ in \eqref{eq:tuplawee} is somewhat arbitrary. The
reader can easily see that any constant strictly larger than $2$ would
suffice. Let us next prove two technical lemmas.

\begin{lemma} \label{thm:tuplawee}
  Given a CMC, the set $W(\iii)$ is incomparable for every
  $\iii \in I^*$. Furthermore, if $\jjj \in W(\iii)$ then
  \begin{equation*}
    D^{-3}\min_{i \in I}\diam(X_i)\diam(X_\iii) \le \diam(X_\jjj) \le
    D^2\diam(X_\iii).
  \end{equation*}
\end{lemma}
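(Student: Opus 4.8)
The plan is to get both halves from the single multiplicativity axiom \ref{M2}, using only the $Z$-membership part of the definition of $W(\iii)$ (the distance requirement $\dist(X_{\iii'},X_{\jjj'})\le 3\diam(X_{\iii'})$ plays no role here). For the diameter estimates, fix $\jjj\in W(\iii)$, write $\kkk=\iii\land\jjj$, $\iii'=\sigma^{|\kkk|}(\iii)$, $\jjj'=\sigma^{|\kkk|}(\jjj)$, so that $\iii=\kkk\iii'$ and $\jjj=\kkk\jjj'$. Two applications of \ref{M2} give $\diam(X_\iii)$ comparable to $\diam(X_\kkk)\diam(X_{\iii'})$ and $\diam(X_\jjj)$ comparable to $\diam(X_\kkk)\diam(X_{\jjj'})$, each up to a factor $D^{\pm1}$. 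The membership $\jjj'\in Z(\diam(X_{\iii'}))$ unwinds to $\diam(X_{\jjj'})\le\diam(X_{\iii'})<\diam(X_{(\jjj')^-})$. Feeding the left inequality into the comparisons yields at once the upper bound $\diam(X_\jjj)\le D^2\diam(X_\iii)$. For the lower bound I would peel off the last letter $j$ of $\jjj'$ and apply \ref{M2} once more to get $\diam(X_{(\jjj')^-})\le D(\min_{i\in I}\diam(X_i))^{-1}\diam(X_{\jjj'})$; combined with $\diam(X_{\iii'})<\diam(X_{(\jjj')^-})$ this forces $\diam(X_{\jjj'})>D^{-1}\min_{i\in I}\diam(X_i)\diam(X_{\iii'})$, and carrying the common prefix $\kkk$ back through \ref{M2} produces the stated constant $D^{-3}\min_{i\in I}\diam(X_i)$.

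For incomparability I would first record the elementary fact that every set $Z(r)$ is incomparable: if $\kkk_1$ were a proper prefix of $\kkk_2$ with both in $Z(r)$, then $X_{\kkk_2^-}\subset X_{\kkk_1}$ gives $\diam(X_{\kkk_2^-})\le\diam(X_{\kkk_1})\le r$, contradicting $r<\diam(X_{\kkk_2^-})$. Now suppose, toward a contradiction, that $\jjj_1,\jjj_2\in W(\iii)$ are distinct and comparable, say $\jjj_1$ a proper prefix of $\jjj_2$, and set $m_i=|\iii\land\jjj_i|$. Since every prefix of $\jjj_1$ is a prefix of $\jjj_2$, the common beginning $\iii\land\jjj_1$ is also a common beginning of $\iii$ and $\jjj_2$, whence $m_1\le m_2$. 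In the clean case $m_1=m_2=:m$ the two symbols share $\iii\land\jjj_1=\iii\land\jjj_2=\iii|_m$ and hence the same tail $\iii'=\sigma^m(\iii)$; the tails $\jjj_1'=\sigma^m(\jjj_1)$ and $\jjj_2'=\sigma^m(\jjj_2)$ then satisfy $\jjj_1'$ a proper prefix of $\jjj_2'$ while both lie in $Z(\diam(X_{\iii'}))$, contradicting the incomparability of $Z$-sets just established.

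The remaining case $m_1<m_2$ is where the care is needed. Comparing the entries of $\iii$, $\jjj_1$ and $\jjj_2$ at position $m_1+1$ shows that $m_1<m_2$ can occur only if $|\jjj_1|=m_1$, i.e. $\jjj_1$ is a proper prefix of $\iii$; then the tail $\jjj_1'=\sigma^{m_1}(\jjj_1)$ is the empty symbol while $\iii'=\sigma^{m_1}(\iii)$ is nonempty. Under the empty-symbol convention flagged earlier in the paper, the empty symbol belongs to no $Z(r)$ with $r<\diam(X_\emptyset)$, and $\diam(X_{\iii'})<\diam(X_\emptyset)$ for nonempty $\iii'$, so $\jjj_1'\notin Z(\diam(X_{\iii'}))$, contradicting $\jjj_1\in W(\iii)$. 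I expect this last point to be the only genuinely delicate step: making precise that a proper prefix of $\iii$ cannot itself belong to $W(\iii)$, which is exactly the kind of empty-symbol bookkeeping the paper defers to the reader. Everything else collapses to a single use of \ref{M2} or to the incomparability of $Z(r)$.
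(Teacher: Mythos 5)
Your proof is correct and follows essentially the same route as the paper: the diameter bounds come from splitting at $\iii\land\jjj$ and applying \ref{M2} together with the $Z$-membership inequalities (yielding the identical constants $D^2$ and $D^{-3}\min_{i\in I}\diam(X_i)$), while incomparability is reduced to the incomparability of the sets $Z(r)$ via a case analysis on $|\iii\land\jjj_1|$ versus $|\iii\land\jjj_2|$ --- the paper merely compresses your empty-symbol discussion into the unproved observation that $\iii\ne\jjj\in W(\iii)$ ``clearly'' implies $\iii\bot\jjj$. One cosmetic remark: the empty tail $\jjj_1'$ can also arise in your ``clean'' case $m_1=m_2$ (e.g.\ when $\jjj_1$ is a proper prefix of $\iii$ with $\iii\land\jjj_2=\jjj_1$), but your empty-symbol argument disposes of that subcase verbatim, so no gap results.
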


\begin{proof}
  Fix $\iii \in I^*$. Observe that if $\iii \ne \jjj \in W(\iii)$ then
  clearly $\iii \bot \jjj$. Take $\jjj,\hhh \in W(\iii)$. If now
  $|\jjj\land\iii| < |\hhh\land\iii|$, it must be $\jjj\bot\hhh$ since
  otherwise $\jjj=\iii\land\jjj$, which contradicts with the first
  observation. If $|\jjj\land\iii| = |\hhh\land\iii| =: k$ then
  $\sigma^k(\jjj), \sigma^k(\hhh) \in
  Z\bigl( \diam(X_{\sigma^k(\iii)}) \bigr)$ and hence
  $\jjj\bot\hhh$.

  To prove the second claim, fix $\iii \in I^*$, take $\jjj \in
  W(\iii)$, and denote $\iii'=\sigma^{|\iii\land\jjj|}(\iii)$ and
  $\jjj'=\sigma^{|\iii\land\jjj|}(\jjj)$. Since $\jjj' \in Z\bigl(
  \diam(X_{\iii'}) \bigr)$, we have, using \ref{M2},
  \begin{equation*}
    \diam(X_{\iii'}) \ge \diam(X_{\jjj'}) \ge D^{-1}\min_{i \in
    I}\diam(X_i)\diam(X_{\iii'}).
  \end{equation*}
  Therefore, according to \ref{M2},
  \begin{align*}
    \diam(X_\jjj) &\ge D^{-1} \diam(X_{\iii\land\jjj})
    \diam(X_{\jjj'}) \\
    &\ge D^{-2} \min_{i \in I} \diam(X_i) \diam(X_{\iii\land\jjj})
    \diam(X_{\iii'}) \\
    &\ge D^{-3} \min_{i \in I} \diam(X_i) \diam(X_\iii)
  \end{align*}
  and
  \begin{align*}
    \diam(X_\jjj) &\le D\diam(X_{\iii\land\jjj})\diam(X_{\jjj'}) \\
    &\le D\diam(X_{\iii\land\jjj})\diam(X_{\iii'}) \le D^2\diam(X_\iii).
  \end{align*}
  The proof is finished.
\end{proof}

\begin{lemma} \label{thm:tractable_finite}
  If a semiconformal CMC satisfies the finite clustering property then
  \begin{equation*}
    \sup_{\iii \in I^*} \# W(\iii) < \infty.
  \end{equation*}
\end{lemma}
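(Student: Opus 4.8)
\emph{Plan.} The point is that $W(\iii)$ consists of mutually incomparable symbols whose sets $X_\jjj$ all have diameter comparable to $\diam(X_\iii)$ and cluster inside a ball of radius comparable to $\diam(X_\iii)$ centred at a point of $E\cap X_\iii$; a uniform count then drops out of the (uniform) finite clustering property. First I would record that, since a semiconformal CMC is tractable, the finite clustering property coincides with the uniform finite clustering property (combine Theorem~\ref{thm:bounded_separation} with Corollary~\ref{thm:tractable_corollary}), so that $M:=\sup_{x\in E}\sup_{r>0}\#Z(x,r)<\infty$. Put $c_0=D^{-3}\min_{i\in I}\diam(X_i)$, so that Lemma~\ref{thm:tuplawee} gives $c_0\diam(X_\iii)\le\diam(X_\jjj)\le D^2\diam(X_\iii)$ for every $\jjj\in W(\iii)$.

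The key geometric step is the localization of $W(\iii)$. Fix $\jjj\in W(\iii)$ with $\jjj\neq\iii$, put $\hhh=\iii\land\jjj$, $\iii'=\sigma^{|\hhh|}(\iii)$ and $\jjj'=\sigma^{|\hhh|}(\jjj)$, so that $\iii=\hhh\iii'$ and $\jjj=\hhh\jjj'$. The defining property $\dist(X_{\iii'},X_{\jjj'})\le 3\diam(X_{\iii'})$ together with the semiconformality estimate \eqref{eq:congruent} and \ref{M2} yields
\begin{equation*}
  \dist(X_\iii,X_\jjj)=\dist(X_{\hhh\iii'},X_{\hhh\jjj'})\le C\diam(X_\hhh)\dist(X_{\iii'},X_{\jjj'})\le 3CD\diam(X_\iii).
\end{equation*}
Choosing $x\in E\cap X_\iii$ (such a point exists since $\emptyset\neq\pi([\iii])\subset E\cap X_\iii$ by \ref{M1}) and using $\diam(X_\jjj)\le D^2\diam(X_\iii)$, I would conclude that $X_\jjj\subset B\bigl(x,R\diam(X_\iii)\bigr)$ for every $\jjj\in W(\iii)$, where $R=1+3CD+D^2$.

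It remains to count. Set $r=\tfrac12 c_0\diam(X_\iii)$. For each $\jjj\in W(\iii)$ pick a symbol $\kkk\in[\jjj]$ and put $y_\jjj=\pi(\kkk)\in E\cap X_\jjj$; since $\diam(X_\jjj)\ge c_0\diam(X_\iii)>r$ while $\diam(X_{\kkk|_n})\to0$ by Lemma~\ref{thm:rajanolla}, the symbol $\kkk$ has a unique prefix $\kkk_\jjj\in Z(r)$, and $\jjj$ is a proper prefix of $\kkk_\jjj$ with $y_\jjj\in X_{\kkk_\jjj}\subset X_\jjj$ by \ref{M1}. Distinct $\jjj$ give distinct $\kkk_\jjj$ (otherwise two incomparable elements of $W(\iii)$ would both be prefixes of $\kkk_\jjj$, contradicting Lemma~\ref{thm:tuplawee}), so $\jjj\mapsto\kkk_\jjj$ is injective. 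All the points $y_\jjj$ lie in $E\cap B\bigl(x,R\diam(X_\iii)\bigr)$; take a maximal $r$-separated set $z_1,\dots,z_N$ in this set, with each $z_i\in E$. A volume comparison of the disjoint balls $B(z_i,r/2)\subset B\bigl(x,R\diam(X_\iii)+r/2\bigr)$ gives $N\le(1+4R/c_0)^d=:N_0$, a constant depending only on the data of the CMC, and maximality makes $\{B(z_i,r)\}_{i=1}^N$ cover $E\cap B\bigl(x,R\diam(X_\iii)\bigr)$. Each $y_\jjj$ lies in some $B(z_i,r)$, whence $X_{\kkk_\jjj}\cap B(z_i,r)\neq\emptyset$ and $\kkk_\jjj\in Z(z_i,r)$. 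Therefore
\begin{equation*}
  \#W(\iii)=\#\{\kkk_\jjj:\jjj\in W(\iii)\}\le\sum_{i=1}^N\#Z(z_i,r)\le N_0 M,
\end{equation*}
which is independent of $\iii$.

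The main obstacle is the localization of the second paragraph: one must convert the suffix-level separation bound hidden in the definition of $W(\iii)$ into a genuine bound on $\dist(X_\iii,X_\jjj)$, and this is exactly what semiconformality \eqref{eq:congruent} is designed to supply. The remaining difficulty is the bookkeeping of two scales, since the members of $W(\iii)$ need not lie in a single stopping set $Z(r)$; passing to their descendants $\kkk_\jjj\in Z(r)$ and applying the uniform finite clustering property at $E$-centred balls resolves this, the constant discrepancy between the cluster radius $R\diam(X_\iii)$ and the stopping scale $r$ being absorbed by the Euclidean packing bound $N_0$.
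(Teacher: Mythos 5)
Your proof is correct, but it takes a genuinely different route for the counting step. The localization is common to both arguments: exactly as in the paper, you use \eqref{eq:congruent} together with \ref{M2} to get $\dist(X_\iii,X_\jjj)\le 3CD\diam(X_\iii)$, and Lemma \ref{thm:tuplawee} to trap every $X_\jjj$, $\jjj\in W(\iii)$, inside a ball of radius $(1+3CD+D^2)\diam(X_\iii)$ about a point of $\pi([\iii])$ with $\diam(X_\jjj)$ comparable to $\diam(X_\iii)$. From there the paper counts with the semiconformal measure: it invokes Theorem \ref{thm:regular} for the upper regularity $m\bigl(B(x,r)\bigr)\le Kr^t$ and, for the lower bound, Lemma \ref{thm:nollaleikkaus} (null overlaps of the incomparable pieces $\fii_\jjj(E)$) together with $\mu([\jjj])\ge c^{-1}\diam(X_\jjj)^t$, so that $\#W(\iii)$ is bounded by a ratio of measures. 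You instead count combinatorially: each $\jjj\in W(\iii)$ is pushed down to its unique descendant $\kkk_\jjj$ in the stopping set $Z(r)$ at the single scale $r=\tfrac12 c_0\diam(X_\iii)$ --- well defined because diameters are non-increasing along branches by \ref{M1} and tend to zero by Lemma \ref{thm:rajanolla}, and injective by the incomparability of $W(\iii)$ --- then the portion of $E$ in the cluster ball is covered by at most $(1+4R/c_0)^d$ balls $B(z_i,r)$ with $z_i\in E$ via a maximal $r$-separated set and a Euclidean volume comparison, and the uniform clustering constant $M$ is applied to each $Z(z_i,r)$. This buys a proof whose body uses no measure at all --- no Proposition \ref{thm:semileikkaus}, no Theorem \ref{thm:regular}, no Lemma \ref{thm:nollaleikkaus} --- at the price of the extra stopping-time bookkeeping, and it yields a constant $(1+4R/c_0)^dM$ depending on the ambient dimension $d$ rather than on $t$ and the measure constants. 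Note that your opening upgrade from the finite to the uniform finite clustering property goes through Corollary \ref{thm:tractable_corollary}, hence through Theorem \ref{thm:hausdorff_bounded}, so the measure machinery still sits behind $M$; but the paper's own proof makes the very same reduction, so nothing is lost there. Two cosmetic points: the inclusion $X_\jjj\subset B\bigl(x,R\diam(X_\iii)\bigr)$ strictly gives only $|w-x|\le R\diam(X_\iii)$, so you should enlarge $R$ slightly or use closed balls (the paper's proof has the same slack), and for $\jjj=\iii$ the localization is trivial; neither affects the argument.
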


\begin{proof}
  Suppose $\fii_\iii$, $\iii \in I^*$, are the mappings of Lemma
  \ref{thm:kuvaukset}, $P(t)=0$, and $m = \mu \circ \pi^{-1}$ is the
  measure of Proposition \ref{thm:semileikkaus}. According to
  Corollary \ref{thm:tractable_corollary} and Theorems
  \ref{thm:bounded_separation} and
  \ref{thm:regular}, there exists a constant $K \ge 1$ such that for
  every $x \in E$ and $r>0$
  \begin{equation*}
    m\bigl( B(x,r) \bigr) \le Kr^t.
  \end{equation*}
%  Using Lemma \ref{thm:kuvaukset}, we have
%  \begin{equation} \label{eq:fiileikkausnolla}
%    m\bigl( \fii_\iii(E) \cap \fii_\jjj(E) \bigr) = 0
%  \end{equation}
%  whenever $\iii \bot \jjj$.
  Fix $\iii \in I^*$, take $\jjj \in W(\iii)$, and denote $\iii' =
  \sigma^{|\iii\land\jjj|}(\iii)$ and $\jjj' =
  \sigma^{|\iii\land\jjj|}(\jjj)$. Since $\jjj \in W(\iii)$ and $\jjj'
  \in Z\bigl( \diam(X_{\iii'}) \bigr)$, we have
  $\dist(X_{\iii'},X_{\jjj'}) \le \diam(X_{\iii'})$ and
  \begin{align*}
    \dist(X_\iii,X_\jjj) &\le C \diam(X_{\iii\land\jjj})
    \dist(X_{\iii'},X_{\jjj'}) \\
    &\le 3C \diam(X_{\iii\land\jjj})\diam(X_{\iii'}) \le 3CD \diam(X_\iii).    
  \end{align*}
  Using Lemma \ref{thm:tuplawee}, we obtain
  \begin{align*}
    X_\jjj &\subset B\bigl( x,\diam(X_\iii) + 3CD\diam(X_\iii) +
    \diam(X_\jjj) \bigr) \\
    &\subset B\bigl( x,(1+3CD+D^2)\diam(X_\iii) \bigr)
  \end{align*}
  for a point $x \in \pi([\iii])$ whenever $\jjj \in W(\iii)$. Hence
  \begin{align*}
    m\biggl( \bigcup_{\jjj \in W(\iii)} X_\jjj \biggr) &\le m\bigl(
    B\bigl( x,(1+3CD+D^2)\diam(X_\iii) \bigr) \bigr) \\
    &\le K(1+3CD+D^2)^t\diam(X_\iii)^t.
  \end{align*}
  Since, on the other hand, we have a constant $c \ge 1$ such that
  \begin{align*}
    m\biggl( \bigcup_{\jjj \in W(\iii)} X_\jjj \biggr)
    &\ge m\biggl( \bigcup_{\jjj \in W(\iii)} \fii_\jjj(E) \biggr) 
    = \sum_{\jjj \in W(\iii)} m\bigl( \fii_\jjj(E) \bigr) \\
    &\ge \sum_{\jjj \in W(\iii)} \mu([\jjj])
    \ge c^{-1}\sum_{\jjj \in W(\iii)} \diam(X_\jjj)^t \\
    &\ge \# W(\iii)c^{-1}D^{-3t}\min_{i \in I}\diam(X_i)^t\diam(X_\iii)^t
  \end{align*}
  using Lemmas \ref{thm:nollaleikkaus} and \ref{thm:tuplawee}, we
  conclude
  \begin{equation*}
    \# W(\iii) \le \frac{cKD^{3t}(1+3CD+D^2)^t}{\min_{i \in I}\diam(X_i)^t}
  \end{equation*}
  whenever $\iii \in I^*$.
\end{proof}

The following theorem generalizes a crucial point of \cite[Theorem
2.1]{Schief1994} into the setting of CMC's. See also \cite[Theorem
3.3]{LauRaoYe2001} and \cite[\S 3]{PeresRamsSimonSolomyak2001}.

\begin{theorem} \label{thm:schief}
  If a semiconformal CMC satisfies the finite clustering property then
  there are a constant $\delta>0$ and a symbol $\hhh \in I^*$ such
  that
  \begin{equation*}
    \dist(X_{\iii\hhh},X_{\jjj\hhh}) > \delta\bigl( \diam(X_\iii) +
    \diam(X_\jjj) \bigr)
  \end{equation*}
  whenever $\iii\bot\jjj$.
\end{theorem}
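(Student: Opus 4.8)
The plan is to strip the statement down to a one-sided, single-scale inequality and then to locate a single ``deep interior'' word $\hhh$ by a measure argument whose uniformity is supplied by the finiteness of the neighbour sets $W(\iii)$.

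First I would reduce. Given $\iii \bot \jjj$, write $\kkk = \iii\land\jjj$, so $\iii = \kkk\iii'$ and $\jjj = \kkk\jjj'$ with $\iii',\jjj'$ starting with distinct symbols. Semiconformality \eqref{eq:congruent} gives $\dist(X_{\iii\hhh},X_{\jjj\hhh}) \ge C^{-1}\diam(X_\kkk)\dist(X_{\iii'\hhh},X_{\jjj'\hhh})$, while \ref{M2} bounds $\diam(X_\iii)+\diam(X_\jjj)$ by $D\diam(X_\kkk)\bigl(\diam(X_{\iii'})+\diam(X_{\jjj'})\bigr)$, so it suffices to treat pairs with distinct initial symbols. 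Moreover, since $X_{\jjj\hhh}\subseteq X_\jjj$ we have $\dist(X_{\iii\hhh},X_{\jjj\hhh})\ge\dist(X_{\iii\hhh},X_\jjj)$; hence once I produce $\hhh$ and $\delta_1>0$ with $\dist(X_{\iii\hhh},X_\jjj)\ge\delta_1\diam(X_\iii)$ for all $\iii\bot\jjj$, the same bound with the roles reversed yields $\dist(X_{\iii\hhh},X_{\jjj\hhh})\ge\delta_1\max\{\diam(X_\iii),\diam(X_\jjj)\}\ge\tfrac{\delta_1}{2}\bigl(\diam(X_\iii)+\diam(X_\jjj)\bigr)$, i.e. the claim with $\delta=\delta_1/2$.

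It remains to find $\hhh$ and $\delta_1$, and here I would use the full hypotheses. Semiconformality makes the finite clustering property uniform, so by Theorem \ref{thm:regular} the measure $m$ of Proposition \ref{thm:semileikkaus} is Ahlfors $t$-regular and $0<\HH^t(E)<\infty$, and by Lemma \ref{thm:nollaleikkaus} incomparable images satisfy $m\bigl(\fii_\iii(E)\cap\fii_\jjj(E)\bigr)=0$. By Lemmas \ref{thm:tuplawee} and \ref{thm:tractable_finite}, the pieces that can come within distance $\sim\diam(X_\iii)$ of $X_\iii$ are the at most $N:=\sup_\iii\#W(\iii)<\infty$ members of $W(\iii)$, all of diameter comparable to $\diam(X_\iii)$. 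For fixed $\iii$ the collar of descendants $X_{\iii\mathbf{w}}$ lying within $\delta\diam(X_\iii)$ of some $X_\jjj$, $\jjj\in W(\iii)$, shrinks as $\delta\downarrow0$ to the $m$-null set $\bigcup_{\jjj\in W(\iii)}\bigl(X_\iii\cap X_\jjj\cap E\bigr)$, so a definite $m$-proportion of $X_\iii\cap E$ lies in the complementary interior, where I can select a descendant. Rescaling the configuration $\bigl(X_\iii;\{X_\jjj\}_{\jjj\in W(\iii)}\bigr)$ through \eqref{eq:congruent} shows it has bounded complexity ($\le N$ comparable pieces, distortion $\le C$) independently of $\iii$, so the interior proportion is bounded below uniformly; since by semiconformality the relevant position of $X_{\iii\hhh}$ inside $X_\iii$ is governed by that of $X_\hhh$ in the whole system, one interior word $\hhh$ and one constant $\delta_1$ then serve for every $\iii$.

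The main obstacle is precisely this uniformity, together with the passage from $E$ to the full construction sets $X_\iii$: choosing $\hhh$ only makes $X_{\iii\hhh}$ interior to $X_\iii$ relative to its diameter, and to conclude $\dist(X_{\iii\hhh},X_\jjj)\ge\delta_1\diam(X_\iii)$ I must also rule out that an incomparable $X_\jjj$ intrudes into the interior of $X_\iii$ near $X_{\iii\hhh}$, which is exactly where the uniform finite clustering property (no thick overlaps of pieces along $E$) is indispensable. I expect the cleanest packaging of both points to be a contradiction/compactness argument: assuming the reduced inequality fails for every $\hhh$ and every $\delta_1=1/n$, I would renormalise the offending pairs by $\fii_{\iii_n}^{-1}$, use the finiteness of $W$ to extract boundedly many limiting neighbour configurations, and observe that a limit configuration forces two incomparable images to meet with zero clearance on a set carrying positive $m$-measure, contradicting Lemma \ref{thm:nollaleikkaus}.
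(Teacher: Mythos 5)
Your reduction contains a fatal step. From $X_{\jjj\hhh}\subseteq X_\jjj$ you pass to the one-sided goal ``find $\hhh$ and $\delta_1>0$ with $\dist(X_{\iii\hhh},X_\jjj)\ge\delta_1\diam(X_\iii)$ for all $\iii\bot\jjj$'', but that statement is false under the hypotheses of the theorem, because incomparability of \emph{symbols} does not prevent nesting of the \emph{sets}. In Example \ref{ex:ekaesim} (a semiconformal CMC with the uniform finite clustering property) one has $X_{1^k0}=[1-3^{-k},1]$, so $\iii=1^{k+1}0$ and $\jjj=1^k0$ are incomparable while $X_\iii\subset X_\jjj$; hence $X_{\iii\hhh}\subseteq X_\iii\subseteq X_\jjj$ and $\dist(X_{\iii\hhh},X_\jjj)=0$ for \emph{every} $\hhh$. (Stripping the common prefix does not help: $X_{10}=[\tfrac23,1]\subset[0,1]=X_0$ with distinct first letters.) The theorem is true only because $\hhh$ is appended to \emph{both} words; no ``interior word'' can separate $X_{\iii\hhh}$ from $X_\jjj$ itself. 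The same example breaks the measure step of your collar argument: the collar shrinks to $X_\iii\cap X_\jjj\cap E$, which you declare $m$-null, but Lemma \ref{thm:nollaleikkaus} gives nullity only for the copies $\fii_\iii(E)=\pi([\iii])$, not for the construction sets $X_\iii$ --- above, $m(X_{1^k0}\cap X_{1^{k+1}0})\ge\mu([1^{k+1}0])>0$. This is exactly why Proposition \ref{thm:semileikkaus} needs its extra positional hypothesis and why the paper's proof of Lemma \ref{thm:tractable_finite} is careful to pass to the sets $\fii_\jjj(E)$ before using disjointness.

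Your compactness fallback does not close the remaining gap either. First, the hypothesized failure for every $\hhh$ and every $\delta_1=1/n$ produces, in the limit, incomparable configurations with \emph{zero clearance}, and zero clearance does not contradict Lemma \ref{thm:nollaleikkaus}: touching copies with $m$-null intersection are ubiquitous (the two halves of $[0,1]$ under $x\mapsto x/2$, $x\mapsto x/2+\tfrac12$ satisfy all hypotheses, touch, and the theorem holds there because $\hhh$ pulls descendants away from the common point). Second, the maps $\fii_\iii$ of Lemma \ref{thm:kuvaukset} exist only on $E$, so ``renormalising the sets $X_\jjj$ by $\fii_{\iii_n}^{-1}$'' is undefined; and \eqref{eq:congruent} controls distances between descendants of a \emph{common prefix}, not the shapes or relative positions of the $X_\iii$ themselves, so it does not deliver the uniformity ``one interior word serves for every $\iii$''. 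The paper sidesteps all of this with a purely combinatorial device you did not find: choose $\hhh$ \emph{maximizing} $\#W(\hhh)$ (finite by Lemma \ref{thm:tractable_finite}); since $\{\iii\jjj:\jjj\in W(\hhh)\}\subset W(\iii\hhh)$, maximality forces equality, so every word at the matched scale $Z\bigl(\diam(X_{\iii'\hhh})\bigr)$ beginning with the wrong first letter lies \emph{outside} $W(\iii'\hhh)$ and hence, by the very definition \eqref{eq:tuplawee}, at distance $>3\diam(X_{\iii'\hhh})$; two such one-sided estimates yield $\dist(X_{\iii'\hhh},X_{\jjj'\hhh})\ge\tfrac12\bigl(\diam(X_{\iii'\hhh})+\diam(X_{\jjj'\hhh})\bigr)$, and only then is the common prefix reattached via \eqref{eq:congruent} and \ref{M2} --- the one step of your plan that is carried out correctly.
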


\begin{proof}
  Using Lemma \ref{thm:tractable_finite}, we choose $\hhh \in I^*$
  such that $\# W(\hhh) = \sup_{\iii \in I^*} \# W(\iii)$. Therefore
  clearly
  \begin{equation*}
    \#\{ \iii\jjj : \jjj \in W(\hhh) \} = \# W(\hhh) \ge \# W(\iii\hhh)
  \end{equation*}
  for every $\iii \in I^*$. Since it follows immediately from the
  definition \eqref{eq:tuplawee} that
  \begin{equation*}
    \{ \iii\jjj : \jjj \in W(\hhh) \} \subset W(\iii\hhh),
  \end{equation*}
  we infer
  \begin{equation} \label{eq:maxwee}
    W(\iii\hhh) = \{ \iii\jjj : \jjj \in W(\hhh) \}
  \end{equation}
  whenever $\iii \in I^*$.

  Take next $\iii,\jjj \in I^*$ such that $\iii\bot\jjj$ and denote
  $\iii' = \sigma^{|\iii\land\jjj|}(\iii)$ and $\jjj' =
  \sigma^{|\iii\land\jjj|}(\jjj)$. 
  Let $y_{\jjj'} = \pi(\kkk) \in X_{\jjj'\hhh}$, where $\kkk \in
  [\jjj'\hhh]$, and choose $k \in \N$ such that $\kkk|_k \in Z\bigl(
  \diam(X_{\iii'\hhh}) \bigr)$. Since $\kkk|_1 = \jjj'|_1 \ne
  \iii'|_1$, we have, using \eqref{eq:maxwee},
  \begin{equation*}
    \kkk|_k \notin W(\iii'\hhh).
  \end{equation*}
  Hence the definition \eqref{eq:tuplawee} yields
  $\dist(X_{\kkk|_k},X_{\iii'\hhh}) > 3\diam(X_{\iii'\hhh})$. Since
  $y_{\jjj'} \in X_{\kkk|_k}$, we also have
  $\dist(y_{\jjj'},X_{\iii'\hhh}) > 3\diam(X_{\iii'\hhh})$. Similarly,
  changing the roles of $\iii$ and $\jjj$ above, we find $y_{\iii'}
  \in X_{\iii'\hhh}$ such that $\dist(y_{\iii'},X_{\jjj'\hhh}) >
  3\diam(X_{\jjj'\hhh})$. This implies that
  \begin{align*}
    |y_{\iii'} - y_{\jjj'}| &\ge 3\max\{
    \diam(X_{\iii'\hhh}),\diam(X_{\jjj'\hhh}) \} \\
    &\ge \tfrac{3}{2} \bigl( \diam(X_{\iii'\hhh}) + \diam(X_{\jjj'\hhh})
    \bigr).
  \end{align*}
  Since, on the other hand,
  \begin{equation*}
    |y_{\iii'} - y_{\jjj'}| \le \diam(X_{\iii'\hhh}) +
     \dist(X_{\iii'\hhh},X_{\jjj'\hhh}) + \diam(X_{\jjj'\hhh}),
  \end{equation*}
  we infer
  \begin{equation*}
    \dist(X_{\iii'\hhh},X_{\jjj'\hhh}) \ge \tfrac12 \bigl(
    \diam(X_{\iii'\hhh}) + \diam(X_{\jjj'\hhh}) \bigr).
  \end{equation*}
  Thus, using \eqref{eq:congruent} and \ref{M2},
  \begin{align*}
    \dist\bigl( X_{\iii\hhh},X_{\jjj\hhh} \bigr) &\ge C^{-1}
    \diam(X_{\iii\land\jjj}) \dist\bigl(
    X_{\iii'\hhh},X_{\jjj'\hhh} \bigr) \\
    &\ge (2C)^{-1} \diam(X_{\iii\land\jjj}) \bigl(
    \diam(X_{\iii'\hhh}) + \diam(X_{\jjj'\hhh}) \bigr) \\
    &\ge (2CD)^{-1} \bigl( \diam(X_{\iii\hhh}) + \diam(X_{\jjj\hhh})
    \bigr) \\ 
    &\ge (2CD^2)^{-1} \diam(X_\hhh) \bigl( \diam(X_\iii) +
    \diam(X_\jjj) \bigr)
  \end{align*}
  whenever $\iii\bot\jjj$. Therefore, choosing $\delta = (3CD^2)^{-1}
  \diam(X_\hhh)$, we have finished the proof.
\end{proof}

As a corollary, we notice that for a semiconformal Moran construction, we
may choose the balls in the ball condition to be centered at
$E$ and placed in such manner that also larger collections (than
required in the definition) of them are disjoint.

\begin{corollary} \label{thm:schief_cor}
  If a semiconformal CMC satisfies the ball condition then
  there are a constant $\delta>0$ and a point $x \in E$ such
  that
  \begin{equation*}
    B\bigl( \fii_\iii(x),\delta \diam(X_\iii) \bigr) \cap B\bigl(
    \fii_\jjj(x),\delta \diam(X_\jjj) \bigr) = \emptyset
  \end{equation*}
  whenever $\iii\bot\jjj$. Here $\fii_\iii$, $\iii \in I^*$, are the
  mappings of Lemma \ref{thm:kuvaukset}.
\end{corollary}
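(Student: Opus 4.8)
The plan is to read this off from Theorem \ref{thm:schief} with almost no extra work. Since the ball condition is equivalent to the finite clustering property by Theorem \ref{thm:bounded_separation}, the semiconformal CMC at hand satisfies the finite clustering property, so Theorem \ref{thm:schief} applies and provides a constant $\delta>0$ and a symbol $\hhh \in I^*$ with
\begin{equation*}
  \dist(X_{\iii\hhh},X_{\jjj\hhh}) > \delta\bigl( \diam(X_\iii) + \diam(X_\jjj) \bigr)
\end{equation*}
whenever $\iii\bot\jjj$. I would then show that this very same $\delta$ and $\hhh$ yield the corollary once the point $x$ is chosen to sit inside the cylinder $[\hhh]$.

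First I would fix an arbitrary $\kkk \in I^\infty$ and set $x = \pi(\hhh\kkk) \in E$. By Lemma \ref{thm:kuvaukset} this gives $\fii_\iii(x) = \fii_\iii\bigl( \pi(\hhh\kkk) \bigr) = \pi(\iii\hhh\kkk)$ for every $\iii \in I^*$, and by the definition of the projection $\pi$ together with \ref{M1} we have $\pi(\iii\hhh\kkk) \in X_{(\iii\hhh\kkk)|_{|\iii\hhh|}} = X_{\iii\hhh}$. Hence $\fii_\iii(x) \in X_{\iii\hhh}$, and likewise $\fii_\jjj(x) \in X_{\jjj\hhh}$, for all $\iii,\jjj \in I^*$. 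The only point here demanding a line of care is precisely this containment: it is what forces $x$ to be taken inside $[\hhh]$ so that the semiconformal map sends its image into $X_{\iii\hhh}$ rather than merely into $X_\iii$.

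To finish, I would take $\iii\bot\jjj$ and note that, since $\fii_\iii(x)$ and $\fii_\jjj(x)$ lie in $X_{\iii\hhh}$ and $X_{\jjj\hhh}$ respectively, their distance is at least the distance between the two sets. Theorem \ref{thm:schief} then gives
\begin{equation*}
  |\fii_\iii(x) - \fii_\jjj(x)| \ge \dist(X_{\iii\hhh},X_{\jjj\hhh}) > \delta\diam(X_\iii) + \delta\diam(X_\jjj).
\end{equation*}
The right-hand side is exactly the sum of the radii of the balls $B\bigl( \fii_\iii(x),\delta\diam(X_\iii) \bigr)$ and $B\bigl( \fii_\jjj(x),\delta\diam(X_\jjj) \bigr)$, so these open balls are disjoint, which is the assertion. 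I expect no genuine obstacle: the entire substance of the statement, namely the construction of the separating constant $\delta$ and the stabilizing suffix $\hhh$, has already been carried out in Theorem \ref{thm:schief} through the maximality property \eqref{eq:maxwee} of the sets $W(\iii)$, and the corollary merely transports that separation of sets into a separation of balls centered at image points of a single base point $x$.
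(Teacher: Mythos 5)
Your proposal is correct and is essentially the paper's own proof: the paper likewise takes $\delta$ and $\hhh$ from Theorem \ref{thm:schief} (via Theorem \ref{thm:bounded_separation}) and chooses $x \in \pi([\hhh])$, leaving as "immediate" exactly the details you spell out, namely that $\fii_\iii(x) = \pi(\iii\hhh\kkk) \in X_{\iii\hhh}$ and that the strict set-distance bound separates the two open balls.
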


\begin{proof}
  Assuming that $\delta>0$ and $\hhh \in I^*$ are as in Theorem
  \ref{thm:schief}, the claim follows immediately from Theorems
  \ref{thm:bounded_separation} and \ref{thm:schief} by choosing $x \in
  \pi([\hhh])$.
\end{proof}

Compare the following improvement of Lemma \ref{thm:nollaleikkaus} to
\cite[Theorem 3.3]{Moran1999} and \cite[Theorem 1.6]{LauXu2000}.

\begin{proposition} \label{thm:leikkaus_pieni}
  If a semiconformal CMC satisfies the ball condition then
  \begin{equation*}
    \dimh\bigl( \fii_\iii(E) \cap \fii_\jjj(E) \bigr) < \dimh(E)
  \end{equation*}
  whenever $\iii\bot\jjj$. Here $\fii_\iii$, $\iii \in I^*$, are the
  mappings of Lemma \ref{thm:kuvaukset}.
\end{proposition}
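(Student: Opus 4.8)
The plan is to fix $\iii\bot\jjj$, write $F=\fii_\iii(E)\cap\fii_\jjj(E)$, and prove $\dimh F\le s$ for some $s<t:=\dimh(E)$ by producing, along a sequence of geometric scales, economical covers of $F$ whose $s$-cost tends to $0$. First I would collect the structural facts now available. Since the construction is semiconformal and satisfies the ball condition, the finite clustering property holds (Theorem~\ref{thm:bounded_separation}), and for a semiconformal CMC this is equivalent to the uniform finite clustering property; hence by Theorem~\ref{thm:regular} and Remark~\ref{rem:bounded_hausdorff} we have $P(t)=0$, $0<\HH^t(E)<\infty$, and $\HH^t|_E$ is Ahlfors $t$-regular. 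Moreover the maps $\fii_\iii$ are bi-Lipschitz with ratio proportional to $\diam(X_\iii)$ (Lemma~\ref{thm:kuvaukset}), and Theorem~\ref{thm:schief} (equivalently Corollary~\ref{thm:schief_cor}) supplies a constant $\delta>0$, a word $\hhh\in I^*$, and a point $x_*\in E$ with $\dist(X_{u\hhh},X_{v\hhh})>\delta(\diam(X_u)+\diam(X_v))$ whenever $u\bot v$.

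The decisive observation is a forbidden configuration for the two codings of a point of $F$. Each $y\in F$ carries a coding $u\in[\iii]$ and a coding $v\in[\jjj]$ with $\pi(u)=\pi(v)=y$, and $u|_m\bot v|_m$ for all large $m$. I claim one can never have $\sigma^m(u)\in[\hhh]$ and $\sigma^m(v)\in[\hhh]$ at a common position $m$: otherwise $y\in X_{u|_m\hhh}\cap X_{v|_m\hhh}$, which is empty by Theorem~\ref{thm:schief} since $u|_m\bot v|_m$. In scale language: taking for small $r$ the stopping prefixes $p,q\in Z(r)$ of $u,v$, the balls $B(\fii_p(x_*),\delta\diam(X_p))$ and $B(\fii_q(x_*),\delta\diam(X_q))$ are disjoint while $y$ lies within $Cr$ of both centres, so on at least one side the tail point $\fii_p^{-1}(y)$ or $\fii_q^{-1}(y)$ stays outside a fixed ball $B(x_*,\rho_0)$; equivalently, that side does not continue by $\hhh$.

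I would then turn this into geometric decay of covering cost. Cover $F$ at scale $r_n=\gamma^n$ by the admissible pieces $\{\fii_p(E):p\in Z(r_n)\cap[\iii],\ \fii_p(E)\cap F\neq\emptyset\}$ and set $\Phi_n=\sum_p\diam(X_p)^t$. By the finite clustering property each admissible $\fii_p(E)$ meets at most boundedly many $\fii_q(E)$ with $q\in Z(r_n)\cap[\jjj]$, so the $\iii$- and $\jjj$-sides are coupled with bounded multiplicity. Refining by one scale step, the forbidden configuration removes, on the side forced to avoid $B(x_*,\rho_0)$, the continuation through $\hhh$; since $P(t)=0$ gives $\sum_{w}\diam(X_w)^t\asymp1$ over a block while the $\hhh$-block carries a fixed proportion $\asymp\diam(X_\hhh)^t$ of that mass, the admissible cost contracts by a definite factor, giving $\Phi_{n+1}\le\theta\Phi_n$ with $\theta<1$ and hence $\Phi_n\le C\theta^n$. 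Finally every admissible $p\in Z(r_n)$ has $\diam(X_p)\ge\beta r_n$ with $\beta=D^{-1}\min_{i\in I}\diam(X_i)$ by \ref{M2}, so for $s<t$ one gets $\sum_p\diam(X_p)^s\le(\beta r_n)^{s-t}\Phi_n\le C\beta^{s-t}(\gamma^{\,s-t}\theta)^n$; choosing $s$ so close to $t$ that $\gamma^{\,s-t}\theta<1$ forces this sum to $0$, whence $\HH^s(F)=0$ and $\dimh(\fii_\iii(E)\cap\fii_\jjj(E))\le s<t$.

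The main obstacle is the renewal inequality $\Phi_{n+1}\le\theta\Phi_n$. The clean symbolic statement forbids $\hhh$ simultaneously at a common \emph{symbol} position, whereas the covers must be organised by \emph{scale}; since cylinders of equal length have unequal diameters, one must pass to stopping times and show that at a positive density of scale steps the side forced to avoid $x_*$ indeed loses its $\hhh$-continuation, uniformly in the bounded coupling between the two sides. Controlling this scale--position mismatch, and checking that the proportion removed stays bounded away from $0$ no matter which side avoids, is the delicate point; the finiteness $\sup_{\iii}\#W(\iii)<\infty$ of Lemma~\ref{thm:tractable_finite} together with the uniform separation of Theorem~\ref{thm:schief} is precisely what should make the contraction uniform.
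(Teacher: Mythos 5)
Your proposal does not close, and the failure point is exactly the step you flag: the renewal inequality $\Phi_{n+1}\le\theta\Phi_n$ is not only unproved, for the one-sided cover it is false. What Theorem \ref{thm:schief} forbids is the \emph{simultaneous} continuation through $\hhh$: for incomparable $p\bot q$ one has $X_{p\hhh}\cap X_{q\hhh}=\emptyset$, so a point $y\in F:=\fii_\iii(E)\cap\fii_\jjj(E)$ lies in at most one of $X_{p\hhh},X_{q\hhh}$. But points of $F$ may perfectly well have their $\iii$-side coding pass through $\hhh$ at infinitely many positions, provided the $\jjj$-side coding avoids it there; hence deleting the $\hhh$-continuation from the cover $\{\fii_p(E):p\in Z(r_n)\cap[\iii]\}$ discards pieces that still meet $F$, and no contraction of the admissible $t$-cost on that side follows. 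Two auxiliary steps are also incorrect as stated: the disjointness of $B(\fii_p(x_*),\delta\diam(X_p))$ and $B(\fii_q(x_*),\delta\diam(X_q))$ yields no contradiction with $y$ lying within $\asymp r$ of both centres (the radii are a $\delta$-fraction of $r$), so nothing forces one tail outside a fixed ball; and ``does not continue by $\hhh$'' is \emph{not} equivalent to the tail avoiding a neighborhood of $x_*$ --- only the implication $\sigma^{|p|}(u)\in[\hhh]\Rightarrow y\in X_{p\hhh}$ holds, since other cylinders accumulate at $x_*$. Even in a coupled pair formulation, the excluded pairs are those with \emph{both} sides through $\hhh$, and within the non-product family of admissible pairs there is no lower bound on the fraction of the $t$-cost they carry; the multiplicative constants hidden in \ref{M2} and in $\sum_{p'}\diam(X_{p'})^t\asymp\diam(X_p)^t$ swamp any per-step removal one could extract by counting, which is why the scale--position mismatch you mention cannot be repaired by density arguments alone.

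The paper avoids all of this by upgrading your common-position exclusion to an all-position dichotomy. Setting $A=\bigcup_{\kkk\in I^*}\fii_\kkk\bigl(\pi([\hhh])\bigr)$ and applying Theorem \ref{thm:schief} to every incomparable pair $\iii\kkk\bot\jjj\kkk'$ gives $\fii_\iii(A)\cap\fii_\jjj(A)=\emptyset$, whence
\begin{equation*}
  \fii_\iii(E)\cap\fii_\jjj(E)\subset\fii_\iii(E\setminus A)\cup\fii_\jjj(E\setminus A):
\end{equation*}
for each point of the intersection, at least one of its two codings avoids $\hhh$ at \emph{every} position. By Lipschitz continuity of the maps from Lemma \ref{thm:kuvaukset}, the problem reduces to bounding $\dimh(E\setminus A)$, and $E\setminus A$ is contained in the limit set of the subconstruction $\{X_\iii:\iii\in(I^{|\hhh|}\setminus\{\hhh\})^*\}$. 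The strict dimension drop then comes not from a covering count but from the pressure gap of Lemma \ref{thm:Pj_pieni} (itself proved via uniqueness and ergodicity of the invariant semiconformal measure) together with Theorem \ref{thm:regular}, which identifies the dimension of each limit set with the zero of its pressure. That lemma supplies exactly the uniform contraction you were trying to manufacture by hand; without it, or an equivalent statement that the $\hhh$-avoiding subsystem has strictly negative pressure at $t$, your scheme does not yield any $s<t$ with $\HH^s(F)=0$.
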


\begin{proof}
%  Using Proposition \ref{thm:semileikkaus}, choose $t \ge 0$ such that
%  $P(t)=0$. By Remark \ref{rem:regular}, we have $\dimh(E) = t$.
  Let $\delta > 0$ and $\hhh \in I^*$ be as in Theorem
  \ref{thm:schief} and define
  \begin{equation*}
    A = \bigcup_{\kkk \in I^*} \fii_\kkk\bigl( \pi([\hhh]) \bigr).
%    = \pi(I^\infty \setminus \Omega^0_\hhh).
  \end{equation*}
  According to Theorem \ref{thm:schief}, we have $\fii_\iii\bigl(
  \pi([\hhh]) \bigr) \cap \fii_\jjj\bigl( \pi([\hhh]) \bigr) =
  \emptyset$ whenever $\iii\bot\jjj$, and hence also
  \begin{equation*}
    \fii_\iii(A) \cap \fii_\jjj(A) = \emptyset
  \end{equation*}
  as $\iii\bot\jjj$. Thus we get
  \begin{align*}
    \fii_\iii(E) \cap \fii_\jjj(E) &= \bigl( \fii_\iii(E \setminus A)
    \cap \fii_\jjj(A) \bigr) \cup \bigl( \fii_\iii(E) \cap \fii_\jjj(E
    \setminus A) \bigr) \\
    &\subset \fii_\iii(E \setminus A) \cup \fii_\jjj(E \setminus A)
  \end{align*}
  whenever $\iii \bot \jjj$ from which the Lipschitz continuity
  implies
  \begin{align*}
    \dimh\bigl( \fii_\iii(E) \cap \fii_\jjj(E) \bigr) &\le \dimh\bigl(
    \fii_\iii(E \setminus A) \cup \fii_\jjj(E \setminus A) \bigr) \\
    &\le \dimh(E \setminus A).
  \end{align*}
  Obviously, $\{ X_\iii : \iii \in (I^{|\hhh|})^* \}$ is a CMC having
  $E$ as a limit set, whereas $E \setminus A$ is contained in the limit set
  $F$ of the subconstruction $\{ X_\iii : \iii \in (I^{|\hhh|} \setminus
  \{ \hhh \})^* \}$. Since it is evident that both of these CMC's
  satisfy the uniform finite clustering property, Lemma
  \ref{thm:Pj_pieni} and Theorem \ref{thm:regular} imply that $\dimh(F )
  < \dimh(E)$. Consequently, $\dimh(E \setminus A) < \dimh(E)$ and the
  proof is finished.
%
%  Since $\pi\bigl( (I^{|\hhh|})^\infty \bigr) = E$ and $\pi\bigl(
%  (I^{|\hhh|} \setminus \{ \hhh \})^\infty \bigr) = E \setminus A$, we
%  have finished the proof by recalling Lemma \ref{thm:Pj_pieni}.
\end{proof}

We shall finish this section with the following observation.

\begin{proposition} \label{thm:congruent_controlled}
  Suppose a collection of compact sets with positive diameter $\{
  X_\iii \subset \R^d : \iii \in I^* \}$ satisfies the following four
  conditions:
  \begin{labeledlist}{C}
  \item $X_{\iii i} \subset X_\iii$ as $\iii \in I^*$ and $i \in I$,
    \label{A1}
  \item there exist $\iii,\jjj \in I^*$ such that $X_\iii \cap X_\jjj
    = \emptyset$, \label{A2}
  \item $\lim_{n \to \infty} \diam(X_{\iii|_n}) = 0$ for every $\iii
    \in I^\infty$, \label{A3}
  \item there exists a constant $C \ge 1$ such that
    \begin{equation*}
      C^{-1}\diam(X_\hhh)\dist(X_\iii,X_\jjj) \le
      \dist(X_{\hhh\iii},X_{\hhh\jjj}) \le
      C\diam(X_\hhh)\dist(X_\iii,X_\jjj)
    \end{equation*}
    whenever $\hhh,\iii,\jjj \in I^*$. \label{A4}
  \end{labeledlist}
  Then the collection is a semiconformal CMC.
\end{proposition}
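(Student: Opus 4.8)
The plan is to verify the three CMC axioms together with the semiconformality estimate \eqref{eq:congruent}. Conditions \ref{M1} and \eqref{eq:congruent} are verbatim \ref{A1} and \ref{A4}, so the real work is to extract \ref{M2} and \ref{M3}. I would begin by noting that \ref{A1} and \ref{A3} already make the sets $X_{\iii|_n}$ nested, compact, and of diameter tending to zero, so the projection $\pi \colon I^\infty \to \R^d$, $\{\pi(\iii)\} = \bigcap_{n=1}^\infty X_{\iii|_n}$, is well defined; put $E = \pi(I^\infty)$ and note $\pi([\jjj]) \subset X_\jjj$ for every $\jjj$. Using \ref{A2}, I fix $\iii_0, \jjj_0 \in I^*$ with $X_{\iii_0} \cap X_{\jjj_0} = \emptyset$ and set $d_0 = \dist(X_{\iii_0}, X_{\jjj_0})$, which is positive by compactness.

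The pivotal step is a comparison between $\diam(X_\iii)$ and the diameter of the limit-set piece $\pi([\iii])$. Since $\pi([\iii\iii_0])$ and $\pi([\iii\jjj_0])$ are subsets of $\pi([\iii])$ contained in $X_{\iii\iii_0}$ and $X_{\iii\jjj_0}$, one application of \ref{A4} gives
\begin{equation*}
  \diam\bigl( \pi([\iii]) \bigr) \ge \dist(X_{\iii\iii_0}, X_{\iii\jjj_0}) \ge C^{-1} d_0 \diam(X_\iii),
\end{equation*}
while trivially $\diam(\pi([\iii])) \le \diam(X_\iii)$. This two-sided comparison is the heart of the argument: it lets me replace the possibly unwieldy sets $X_\iii$ by the pieces $\pi([\iii])$, whose points, being values of $\pi$, automatically lie in every set along their address.

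For \ref{M2}, the lower bound is immediate from two applications of \ref{A4} to the pair $\iii_0, \jjj_0$ sitting inside $X_{\hhh\kkk}$:
\begin{equation*}
  \diam(X_{\hhh\kkk}) \ge \dist(X_{\hhh\kkk\iii_0}, X_{\hhh\kkk\jjj_0}) \ge C^{-2} d_0 \diam(X_\hhh)\diam(X_\kkk).
\end{equation*}
The upper bound is the main obstacle, since an arbitrary point of $X_{\hhh\kkk}$ need not lie in any deep descendant, so the naive estimate of $\diam(X_{\hhh\kkk})$ by descendant distances is unavailable. The comparison above circumvents this: it is enough to bound $\diam(\pi([\hhh\kkk]))$, and for $x = \pi(\hhh\kkk u)$, $y = \pi(\hhh\kkk v)$ with $u,v \in I^\infty$ I would write, for each $n$,
\begin{equation*}
  |x - y| \le \diam(X_{\hhh\kkk(u|_n)}) + \dist\bigl( X_{\hhh\kkk(u|_n)}, X_{\hhh\kkk(v|_n)} \bigr) + \diam(X_{\hhh\kkk(v|_n)}).
\end{equation*}
The middle term is at most $C\diam(X_\hhh)\diam(X_\kkk)$ by \ref{A4} together with $X_{\kkk(u|_n)}, X_{\kkk(v|_n)} \subset X_\kkk$, and the two diameters tend to zero as $n \to \infty$ by \ref{A3}. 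Taking the supremum over $x,y$ and feeding the result back through the comparison yields $\diam(X_{\hhh\kkk}) \le C^2 d_0^{-1}\diam(X_\hhh)\diam(X_\kkk)$, so \ref{M2} holds with $D = \max\{1, C^2 d_0^{-1}\}$.

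It remains to obtain the uniform smallness \ref{M3} from the merely pointwise \ref{A3}. Here I would set $a_n = \max_{\iii \in I^n}\diam(X_\iii)$ and argue that $a_n \to 0$. By \ref{A1} the diameter cannot increase along prefixes, so for each $\eps > 0$ the set $\{ \iii \in I^* : \diam(X_\iii) \ge \eps \}$ is closed under passing to prefixes; were $a_n \ge \eps$ for all $n$, this set would be an infinite, finitely branching tree, and König's lemma would furnish $u \in I^\infty$ with $\diam(X_{u|_n}) \ge \eps$ for all $n$, contradicting \ref{A3}. Hence $a_n \to 0$, and choosing $n$ with $a_n < D^{-1}$ gives \ref{M3}. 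This verifies all four requirements, so the collection is a semiconformal CMC; the only genuinely delicate point is the upper bound in \ref{M2}, which the limit-set comparison reduces to the limiting estimate along addresses.
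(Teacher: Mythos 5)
Your proof is correct, and it is organized differently from the paper's, although both arguments pivot on the same comparison object: your limit-set piece $\pi([\iii])$ is exactly the set $\fii_\iii(E)$ appearing in the paper. The paper first establishes Lemma \ref{thm:kuvaukset} from \ref{A1}, \ref{A3}, and \ref{A4}, obtaining maps $\fii_\iii$ on $E$ that are bi-Lipschitz with constants $C^{\pm 1}\diam(X_\iii)$, and then derives \ref{M2} by comparing $\diam(X_\iii)$ with $\diam\bigl(\fii_\iii(E)\bigr)$ (with \ref{A2} entering only through $\diam(E)>0$) and exploiting the composition $\fii_{\iii\jjj} = \fii_\iii \circ \fii_\jjj$, which yields the multiplicative bounds with constant $C^3$. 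You bypass the mappings entirely: your lower comparison $\diam\bigl(\pi([\iii])\bigr) \ge C^{-1}d_0\diam(X_\iii)$ localizes the separated pair of \ref{A2} under an arbitrary prefix via \ref{A4}, and your three-term telescoping estimate for $\diam\bigl(\pi([\hhh\kkk])\bigr)$ reproduces, in self-contained form, the computation the paper carries out inside the proof of Lemma \ref{thm:kuvaukset} (its estimate \eqref{eq:onkuvaus}), ending with the constant $C^2 d_0^{-1}$ in place of $C^3$. What the paper's route buys is economy within the article: Lemma \ref{thm:kuvaukset} is needed in Section 4 regardless, so deriving \ref{M2} from it costs nothing extra and the composition trick makes the multiplicativity transparent; what your route buys is independence from that lemma, so the proposition stands on its own. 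For \ref{M3} the two arguments coincide in substance: your K\"onig's-lemma extraction of an infinite branch along which $\diam(X_{u|_n}) \ge \eps$ is precisely the sequential compactness of $I^\infty$ that the paper invokes (choose level-$n$ maximizers, pass to a convergent subsequence, and contradict \ref{A3} along the limit symbol), and either version, combined with the monotonicity of $n \mapsto \max_{\iii \in I^n}\diam(X_\iii)$ coming from \ref{A1}, produces a level $n$ with maximal diameter below $D^{-1}$, as required.
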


\begin{proof}
  It suffices to prove \ref{M2} and \ref{M3}. To show \ref{M2},
  observe first that the assumptions \ref{A1} and \ref{A3}
  guarantee the existence of the limit set $E$ and the claim in Lemma
  \ref{thm:kuvaukset} follows from the assumptions \ref{A1},
  \ref{A3}, and \ref{A4}. Notice also that the assumption \ref{A2}
  implies immediately that $\diam(E)>0$. Let $\fii_\iii$, $\iii \in I^*$,
  be the mappings of Lemma \ref{thm:kuvaukset}. Then
  \begin{equation*}
    \diam\bigl( \fii_\iii(E) \bigr) \ge |\fii_\iii(x) - \fii_\iii(y)|
    \ge C^{-1}\diam(X_\iii)|x-y|
  \end{equation*}
  whenever $x,y \in E$ and it follows that
  \begin{equation}
    \label{eq:ekakohta}
    \diam(X_\iii) \le C\diam(E)^{-1}\diam\bigl( \fii_\iii(E) \bigr)
  \end{equation}
  for every $\iii \in I^*$. Since
  \begin{align*}
    \diam\bigl( \fii_{\iii\jjj}(E) \bigr) &= \sup_{x,y \in E}\bigl|
    \fii_\iii\bigl( \fii_\jjj(x) \bigr) - \fii_\iii\bigl( \fii_\jjj(y)
    \bigr) \bigr| \\
    &\le C^2\diam(X_\iii)\diam(X_\jjj) \sup_{x,y \in E}|x-y|
  \end{align*}
  whenever $\iii,\jjj \in I^*$, we get, by \eqref{eq:ekakohta}, that
  \begin{align*}
    \diam(X_{\iii\jjj}) &\le C\diam(E)^{-1}\diam\bigl( \fii_{\iii\jjj}(E)
    \bigr) \\
    &\le C^3 \diam(X_\iii)\diam(X_\jjj)
  \end{align*}
  whenever $\iii,\jjj \in I^*$.
  On the other hand,
  \begin{equation*}
    \diam\bigl( \fii_\iii(E) \bigr) = \sup_{x,y \in E} |\fii_\iii(x) -
    \fii_\iii(y)| \le C\diam(X_\iii)\sup_{x,y\in E}|x-y|
  \end{equation*}
  implies that
  \begin{equation}
    \label{eq:tokakohta}
    \diam(X_\iii) \ge C^{-1}\diam(E)^{-1}\diam\bigl( \fii_\iii(E) \bigr)
  \end{equation}
  for every $\iii \in I^*$. Since
  \begin{align*}
    \diam\bigl( \fii_{\iii\jjj}(E) \bigr) &\ge \bigl|
    \fii_\iii\bigl( \fii_\jjj(x) \bigr) - \fii_\iii\bigl( \fii_\jjj(y)
    \bigr) \bigr| \\
    &\ge C^{-2}\diam(X_\iii)\diam(X_\jjj)|x-y|
  \end{align*}
  whenever $x,y \in E$ and $\iii,\jjj \in I^*$, we get, by
  \eqref{eq:tokakohta}, that
  \begin{align*}
    \diam(X_{\iii\jjj}) &\ge C^{-1}\diam(E)^{-1}\diam\bigl(
    \fii_{\iii\jjj}(E) \bigr) \\
    &\ge C^{-3}\diam(X_\iii)\diam(X_\jjj)
  \end{align*}
  whenever $\iii,\jjj \in I^*$.

  Let us then show \ref{M3}. Denote $M_n = \max_{\iii \in I^n}
  \diam(X_\iii)$ as $n \in \N$ and choose $\iii_1,\iii_2,\ldots \in
  I^\infty$ such that
  \begin{equation*}
    M_n = \diam(X_{\iii_n|_n})
  \end{equation*}
  for every $n \in \N$. By the compactness of $I^\infty$, the sequence
  $\{ \iii_n \}_{n \in \N}$ has a converging subsequence. Let $\iii
  \in I^\infty$ be the limit point of such a subsequence. Now for each
  $j \in \N$ there is $n(j) \in \N$ such that $n(j) \ge j$ and
  $\iii_{n(j)} \in [\iii|_j]$. Since $\iii_{n(j)}|_j = \iii|_j$ for
  all $j \in \N$, we have, using \ref{A1} and \ref{A3},
  \begin{align*}
    M_{n(j)} &= \diam(X_{\iii_{n(j)}|_{n(j)}}) \\
    &\le \diam(X_{\iii_{n(j)}|_j}) = \diam(X_{\iii|_j}) \to 0
  \end{align*}
  as $j \to \infty$. The proof is finished by choosing $j \in \N$ such
  that $M_{n(j)} < C^{-3}$.
\end{proof}

\section{Semiconformal iterated function system} \label{sec:congruent_IFS}

We assume that for each $i \in I$ there is a contractive injection
$\fii_i \colon \Omega \to \Omega$ defined on an open subset $\Omega$
of $\R^d$ and that there also exists a closed and nonempty $X \subset
\Omega$ satisfying
\begin{equation}
  \label{eq:X_forward_invariant}
  \bigcup_{i \in I} \fii_i(X) \subset X.
\end{equation}
Here the \emph{contractivity} of $\fii_i$ means that there is a
constant $0<s_i<1$ such that
\begin{equation}
  \label{eq:contraction}
  |\fii_i(x) - \fii_i(y)| \le s_i|x-y|
\end{equation}
whenever $x,y \in \Omega$. The collection $\{ \fii_i : i \in I \}$ is
then called an \emph{iterated function system (IFS)}. As shown in
\cite[\S 3]{Hutchinson1981}, an elegant application of the Banach
fixed point theorem implies the existence of a unique compact and
nonempty set $E \subset X$ for which
\begin{equation*}
  E = \bigcup_{i \in I} \fii_i(E).
\end{equation*}
Such a set $E$ is called an \emph{invariant set} (for the corresponding
IFS). As a side note, it is not necessary to require the mappings
$\fii_i$ to be injective to ensure the existence of the
invariant set. However, under this additional assumption, it follows
from Brouwer's domain invariance theorem \cite[Theorem
IV.7.4]{Dold1972} that $\fii_i(U)$ is open whenever $U$ is open.

Observe that the set $X$ can be chosen to be a closed neighborhood
of the invariant set $E$. Indeed, we fix $0 < \eps <
\dist(E,\R^d\setminus\Omega)$ (if $\Omega = \R^d$, any positive $\eps$
will do) and take
\begin{equation*}
  X = \{ x \in \Omega : |x-a| \le \eps \text{ for some } a \in E \}.
\end{equation*}
The validity of \eqref{eq:X_forward_invariant} is then a consequence of
the easily proven fact that
\begin{equation}
  \label{eq:kutistava_dist}
  \dist\bigl( \fii_i(A),E \bigr) \le s_i \dist(A,E)
\end{equation}
whenever $A \subset \Omega$ and $i \in I$.

We say that an IFS is \emph{tractable} if there exists a compact set
$A \subset \Omega$ and a constant $C > 0$ such that for each $\iii \in
I^*$
\begin{equation*}
  |\fii_\iii(x) - \fii_\iii(y)| \le C\diam\bigl( \fii_\iii(A) \bigr)|x-y|
\end{equation*}
whenever $x,y \in A$ and it defines a CMC in this situation, that is,
the collection 
$\{ \fii_\iii(A) : \iii \in I^* \}$ is a CMC. The limit set of such a
CMC is clearly $E$. Here $\fii_\iii = \fii_{i_1}
\circ \cdots \circ \fii_{i_n}$ as $\iii = (i_1,\ldots,i_n) \in I^n$
and $n \in \N$.

\begin{lemma} \label{thm:IFS_controlled_tractable}
  A tractable IFS defines a tractable CMC.
\end{lemma}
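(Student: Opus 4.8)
The plan is to verify the tractability condition \eqref{eq:tractable} directly for the CMC $\{X_\iii\}$ given by $X_\iii = \fii_\iii(A)$, where $A$ and $C$ are the compact set and constant furnished by the tractable IFS. The governing idea is that the composition relation $\fii_{\hhh\iii} = \fii_\hhh \circ \fii_\iii$ lets us transport a distance estimate at level $\iii,\jjj$ up to level $\hhh\iii,\hhh\jjj$ by applying the single map $\fii_\hhh$, whose Lipschitz behaviour on $A$ is exactly controlled by $\diam\bigl(\fii_\hhh(A)\bigr) = \diam(X_\hhh)$ through the tractability bound of the IFS.

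First I would record that $A$ is forward invariant, that is, $\fii_i(A) \subset A$ for every $i \in I$. This is the one point that requires an argument: since $\{X_\iii\}$ is assumed to be a CMC, condition \ref{M1} gives $\fii_{\iii i}(A) = X_{\iii i} \subset X_\iii = \fii_\iii(A)$, and because each $\fii_\iii$ is injective we may cancel it to obtain $\fii_i(A) \subset A$. Iterating, $\fii_\iii(A) \subset A$, and in particular $X_\iii \subset A$ for all $\iii \in I^*$.

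Now fix $\hhh \in I^*$ and $\iii,\jjj \in Z(r)$ with $\dist(X_\iii,X_\jjj) \le r$. Since $A$ is compact, so are $X_\iii$ and $X_\jjj$, and the distance between them is attained at some $a \in X_\iii$ and $b \in X_\jjj$ with $|a-b| = \dist(X_\iii,X_\jjj) \le r$. By the previous paragraph $a,b \in A$, so the tractability estimate of the IFS applied to $\fii_\hhh$ yields
\[
  |\fii_\hhh(a) - \fii_\hhh(b)| \le C \diam\bigl( \fii_\hhh(A) \bigr)|a - b| \le C \diam(X_\hhh)\, r.
\]
Finally, $\fii_\hhh(a) \in \fii_\hhh\bigl(\fii_\iii(A)\bigr) = X_{\hhh\iii}$ and likewise $\fii_\hhh(b) \in X_{\hhh\jjj}$, so $\dist(X_{\hhh\iii},X_{\hhh\jjj}) \le |\fii_\hhh(a) - \fii_\hhh(b)|$, and combining with the display gives exactly \eqref{eq:tractable} with the same constant $C$ (enlarged to be at least $1$ if necessary). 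I expect the only real obstacle to be the forward-invariance step: recognizing that the CMC hypothesis, via \ref{M1} and injectivity of the $\fii_i$, is precisely what forces $\fii_\iii(A) \subset A$ and thereby places the distance-realizing points inside $A$, where the IFS bound is available. Everything else is the functoriality $\fii_{\hhh\iii} = \fii_\hhh \circ \fii_\iii$ together with compactness of $A$.
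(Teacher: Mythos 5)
Your proof is correct and takes essentially the same route as the paper, whose entire proof is the single inequality $\dist\bigl( \fii_{\hhh\iii}(A),\fii_{\hhh\jjj}(A) \bigr) \le C\diam\bigl( \fii_\hhh(A) \bigr) \dist\bigl( \fii_\iii(A),\fii_\jjj(A) \bigr)$, exactly what you establish via the distance-realizing points. Your forward-invariance step (deducing $\fii_i(A) \subset A$ from \ref{M1} and injectivity, so that those points lie in $A$ where the IFS bound applies) is a correct elaboration of a detail the paper leaves implicit.
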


\begin{proof}
  Choose a compact set $A \subset X$ such that the collection $\{
  \fii_\iii(A) : \iii \in I^* \}$ is a CMC. Then
  \begin{equation*}
    \dist\bigl( \fii_{\iii\hhh}(A),\fii_{\iii\kkk}(A) \bigr) \le
    C\diam\bigl( \fii_\iii(A) \bigr) \dist\bigl(
    \fii_\hhh(A),\fii_\kkk(A) \bigr),
  \end{equation*}
  which implies \eqref{eq:tractable} and finishes the proof.
\end{proof}

Furthermore, we say that an IFS is \emph{semiconformal} if the invariant
set $E$ has positive diameter and there are
constants $0 < \as_\iii \le \ys_\iii < 1$, $\iii \in I^*$, and $D \ge
1$ for which $\ys_\iii \le D\as_\iii$ as $\iii \in I^*$ and
\begin{equation}
  \label{eq:congruent_function}
  \as_\iii|x-y| \le |\fii_\iii(x) - \fii_\iii(y)| \le \ys_\iii|x-y|
\end{equation}
whenever $x,y \in \Omega$. For an interesting class of quasiregular
mappings which admit uniform control of the distortion with respect to
iteration, the reader is referred to \cite{MartinMayerPeltonen2006}.

The following lemma shows that a semiconformal
IFS defines a semiconformal CMC. The natural question whether the converse
holds raises from Lemma \ref{thm:kuvaukset}. Sufficient geometric
conditions on the limit set for the positive answer are provided in
\cite{AlestaloTrotsenkoVaisala2003}. See also \cite[Example
6.2]{VaisalaVuorinenWallin1994}.

\begin{lemma} \label{thm:congruent_ifs}
  If $\{ \fii_i : i \in I \}$ is a semiconformal IFS and a compact set $A$
  with positive diameter
  satisfies $\fii_i(A) \subset A$ for every $i \in I$ then $\{
  \fii_\iii(A) : \iii \in I^* \}$ is a semiconformal CMC.
  In particular, a semiconformal IFS is tractable.
  Furthermore, the mappings $\fii_\iii|_E$, $\iii \in I^*$, are the
  mappings of Lemma \ref{thm:kuvaukset}.
\end{lemma}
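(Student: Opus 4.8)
The plan is to put $X_\iii = \fii_\iii(A)$ for each $\iii \in I^*$ and to verify the four hypotheses \ref{A1}--\ref{A4} of Proposition \ref{thm:congruent_controlled}; that proposition then delivers the semiconformal CMC directly, so I never have to check \ref{M2} or \ref{M3} by hand. Before checking the hypotheses I would record the two standing facts I will use repeatedly: each $X_\iii$ is compact, being the continuous image of the compact set $A$, and has positive diameter because \eqref{eq:congruent_function} gives $\diam(\fii_\iii(A)) \ge \as_\iii\diam(A) > 0$; moreover $E \subset A$, since $A$ is compact with $\bigcup_{i\in I}\fii_i(A)\subset A$ and $E$ is the unique invariant set.

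Conditions \ref{A1} and \ref{A3} are quick. For \ref{A1}, the hypothesis $\fii_i(A)\subset A$ gives $X_{\iii i}=\fii_\iii(\fii_i(A))\subset\fii_\iii(A)=X_\iii$. For \ref{A3}, setting $s=\max_{i\in I}s_i<1$ with $s_i$ the contraction ratios of \eqref{eq:contraction}, composition of Lipschitz maps yields $\diam(X_{\iii|_n})\le s^n\diam(A)\to 0$. The one cheap condition that actually uses a hypothesis is \ref{A2}, where I would exploit $\diam(E)>0$: choose $a,b\in E$ with $|a-b|=\diam(E)$, and for each $n$ write $a\in\fii_\iii(E)\subset X_\iii$ and $b\in\fii_\jjj(E)\subset X_\jjj$ with $\iii,\jjj\in I^n$ (using $E=\bigcup_{\iii\in I^n}\fii_\iii(E)$). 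Since $\diam(X_\iii),\diam(X_\jjj)\le s^n\diam(A)$, an intersection point of $X_\iii$ and $X_\jjj$ would force $|a-b|\le 2s^n\diam(A)<\diam(E)$ for large $n$, a contradiction; hence $X_\iii\cap X_\jjj=\emptyset$, which is \ref{A2}.

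The heart of the argument is \ref{A4}. Applying \eqref{eq:congruent_function} pointwise and passing to the appropriate infimum and supremum gives $\as_\hhh\dist(X_\iii,X_\jjj)\le\dist(X_{\hhh\iii},X_{\hhh\jjj})\le\ys_\hhh\dist(X_\iii,X_\jjj)$, and similarly $\as_\hhh\diam(A)\le\diam(X_\hhh)\le\ys_\hhh\diam(A)$. The point where the definition of a semiconformal IFS is genuinely used --- and the step I expect to need the most care --- is combining these with $\ys_\hhh\le D\as_\hhh$ to conclude that both $\as_\hhh$ and $\ys_\hhh$ are comparable to $\diam(X_\hhh)/\diam(A)$ up to the factor $D$; substituting this into the distance bounds yields \ref{A4} with, say, $C=D\max\{\diam(A),\diam(A)^{-1}\}$. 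Proposition \ref{thm:congruent_controlled} then gives that $\{\fii_\iii(A):\iii\in I^*\}$ is a semiconformal CMC. Tractability of the IFS is then immediate: for $x,y\in A$ one reads off $|\fii_\iii(x)-\fii_\iii(y)|\le\ys_\iii|x-y|\le D\diam(A)^{-1}\diam(\fii_\iii(A))|x-y|$, so the defining inequality of a tractable IFS holds with constant $D/\diam(A)$.

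It remains to identify $\fii_\iii|_E$ with the abstract map of Lemma \ref{thm:kuvaukset}, which is characterised uniquely on $E=\pi(I^\infty)$ by $\fii_\iii(\pi(\hhh))=\pi(\iii\hhh)$ for all $\hhh\in I^\infty$. To verify this equation I would note that $\pi(\hhh)\in X_{\hhh|_n}=\fii_{\hhh|_n}(A)$ for every $n$, so $\fii_\iii(\pi(\hhh))\in\fii_{\iii(\hhh|_n)}(A)=X_{\iii(\hhh|_n)}=X_{(\iii\hhh)|_{|\iii|+n}}$; intersecting over $n$, the right-hand sets shrink to the single point $\pi(\iii\hhh)$, forcing $\fii_\iii(\pi(\hhh))=\pi(\iii\hhh)$. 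The only subtlety is notational --- the symbol $\fii_\iii$ is overloaded for the IFS map and for the map of Lemma \ref{thm:kuvaukset} --- and this computation is precisely what shows the two agree on $E$.
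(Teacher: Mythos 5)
Your proposal is correct and follows essentially the same route as the paper: both reduce to verifying \ref{A1}--\ref{A4} and invoking Proposition \ref{thm:congruent_controlled}, with the same key step of combining \eqref{eq:congruent_function} with $\ys_\iii \le D\as_\iii$ to obtain bi-Lipschitz control in terms of $\diam\bigl(\fii_\iii(A)\bigr)$ with the constant $C = D\max\{\diam(A),\diam(A)^{-1}\}$. Your explicit arguments for \ref{A2} and for the identity $\fii_\iii\bigl(\pi(\hhh)\bigr)=\pi(\iii\hhh)$ simply fill in details the paper leaves to the reader.
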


\begin{proof}
  To be able to use Proposition \ref{thm:congruent_controlled}, we
  have to verify the required assumptions \ref{A1}--\ref{A4}.
  Observe first that \ref{A1} is clearly satisfied and the positivity
  of $\diam(E)$ implies \ref{A2}. Notice also that the sets
  $\fii_\iii(A)$, $\iii \in I^*$, are compact with positive
  diameter. Since for fixed $\iii \in I^*$, we have
  $\as_\iii\diam(A) \le \diam\bigl( \fii_\iii(A) \bigr) \le
  \ys_\iii\diam(A)$ by \eqref{eq:congruent_function}, it follows that
  \begin{equation}
    \label{eq:diam_bi-lip}
    C^{-1}\diam\bigl( \fii_\iii(A) \bigr)|x-y| \le |\fii_\iii(x) -
    \fii_\iii(y)| \le C\diam\bigl( \fii_\iii(A) \bigr)|x-y|,
  \end{equation}
  where $C=D\max\{ \diam(A),\diam(A)^{-1} \}$ and $x,y \in A$. Hence,
  applying \eqref{eq:contraction}
  to \eqref{eq:diam_bi-lip} several times, we get $\diam\bigl(
  \fii_\iii(A) \bigr) \le Cs_{i_1}\cdots s_{i_{|\iii|}}$, which
  implies the assumption \ref{A3}.
  Since \eqref{eq:diam_bi-lip} gives also the assumption \ref{A4},
  that is
  \begin{gather*}
    C^{-1}\diam\bigl( \fii_\iii(A) \bigr) \dist\bigl(
    \fii_\hhh(A),\fii_\kkk(A) \bigr) \le \dist\bigl(
    \fii_{\iii\hhh}(A),\fii_{\iii\kkk}(A) \bigr) \\ \le C\diam\bigl(
    \fii_\iii(A) \bigr) \dist\bigl( \fii_\hhh(A),\fii_\kkk(A) \bigr)
  \end{gather*}
  as $\hhh,\kkk \in I^*$, we have finished the proof of the first
  claim.

  The second claim follows from \eqref{eq:diam_bi-lip} by recalling
  that the invariant set $E$ has positive diameter and it satisfies
  $\fii_i(E) \subset E$ for every $i \in I$.
  The third claim follows immediately since the collection $\{
  \fii_\iii(E) : \iii \in I^* \}$ is a semiconformal CMC.
\end{proof}

We say that an IFS satisfies an \emph{open set condition
(OSC)}, if there exists a nonempty open set $U \subset \Omega$ such
that
\begin{equation*}
  \fii_\iii(U) \cap \fii_\jjj(U) = \emptyset
\end{equation*}
whenever $\iii\bot\jjj$. See \cite[Theorem III]{Moran1946}
for the motivation of the definition.
Adapting terminology from \cite{BandtHungRao2006}, we call any
such nonempty open set $U$ a \emph{feasible set} for the OSC. If there
is a feasible set intersecting $E$, we say that a \emph{strong OSC} is
satisfied. As an immediate consequence of the definition, we notice
that each nonempty open subset and each image $\fii_\iii(U)$ of a
feasible set $U$ is feasible as well. Thus, using the observation
\eqref{eq:kutistava_dist} repeatedly, we see that the OSC is
equivalent to the existence of a feasible set $U \subset X$. Recall
that $X$ is the fixed compact $\eps$-neighborhood of the invariant
set. The next lemma shows that this definition of the OSC is
equivalent to the more commonly used one, see \cite[\S
5.2]{Hutchinson1981}.

\begin{lemma} \label{thm:classical_OSC}
  An IFS satisfies the OSC exactly when
  there exists a nonempty open set $V \subset X$ such that
  \begin{equation*}
    \fii_i(V) \subset V
  \end{equation*}
  as $i \in I$ and
  \begin{equation*}
    \fii_i(V) \cap \fii_j(V) = \emptyset
  \end{equation*}
  as $i \ne j$. Furthermore, there exists a feasible set
  intersecting $E$ if and only if there exists a set $V$ as above
  such that $V \cap E \ne \emptyset$.
\end{lemma}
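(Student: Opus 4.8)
The plan is to prove the stated equivalence in both directions and then handle the intersection (strong) version separately. The guiding idea is that a set $V$ with the two listed properties is automatically a feasible set, while conversely a feasible set can be ``saturated'' into such a $V$ by taking the union of all of its images. Throughout I would lean on three facts already available in the text: each $\fii_\iii(U)$ is open by the domain invariance remark, $\fii_\iii(X)\subset X$ by iterating \eqref{eq:X_forward_invariant}, and the OSC is equivalent to the existence of a feasible set $U\subset X$ by repeated use of \eqref{eq:kutistava_dist}.

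First I would show that any nonempty open $V\subset X$ with $\fii_i(V)\subset V$ for all $i\in I$ and $\fii_i(V)\cap\fii_j(V)=\emptyset$ for $i\ne j$ is feasible, which is precisely the OSC. Given $\iii\bot\jjj$, set $\kkk=\iii\land\jjj$ and factor $\iii=\kkk\,i'\hat\iii$ and $\jjj=\kkk\,j'\hat\jjj$ with $i'\ne j'$ (the cases $\kkk$ empty or $\hat\iii,\hat\jjj$ empty being covered trivially). Iterating $\fii_i(V)\subset V$ gives $\fii_{\hat\iii}(V)\subset V$ and $\fii_{\hat\jjj}(V)\subset V$, hence $\fii_{i'}(\fii_{\hat\iii}(V))\subset\fii_{i'}(V)$ and $\fii_{j'}(\fii_{\hat\jjj}(V))\subset\fii_{j'}(V)$, which are disjoint by the second property. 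Applying the injection $\fii_\kkk$ preserves this disjointness, so $\fii_\iii(V)\cap\fii_\jjj(V)=\emptyset$, proving $V$ is feasible.

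For the converse I would start from a feasible set and, by the reduction recorded just before the lemma, assume it is a feasible $U\subset X$. Then I set $V=\bigcup_\iii\fii_\iii(U)$, the union taken over all finite words including the empty one, so that $U$ itself appears. Each $\fii_\iii(U)$ is open, hence $V$ is open and nonempty, and $\fii_\iii(U)\subset\fii_\iii(X)\subset X$ gives $V\subset X$. The inclusion $\fii_i(V)=\bigcup_\iii\fii_{i\iii}(U)\subset V$ is immediate, and for $i\ne j$ every pair $\fii_{i\iii}(U),\fii_{j\jjj}(U)$ is disjoint because $i\iii\bot j\jjj$ and $U$ is feasible; thus $\fii_i(V)\cap\fii_j(V)=\emptyset$. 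This produces a $V$ with exactly the required properties.

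Finally, for the strong OSC I would track a point of $E$ through both constructions. The easy direction is clear: if the $V$ above satisfies $V\cap E\ne\emptyset$ then, being feasible, it is a feasible set meeting $E$. The other direction is where I expect the main obstacle, since the generic reduction to a feasible subset of $X$ could a priori push the set off $E$. To avoid this I would fix $x_0\in U'\cap E$ for a feasible $U'$ meeting $E$, shrink to a bounded ball $B(x_0,\rho)\subset U'$ (still feasible and still containing $x_0\in E$), and then iterate a single fixed map: writing $\iii_n\in I^n$ for the word of $n$ copies of some $i\in I$, the estimate $\dist(\fii_{\iii_n}(x),E)\le s_i^{\,n}\dist(x,E)$ from \eqref{eq:kutistava_dist} together with the boundedness of $\dist(\cdot,E)$ on the ball forces the open image $\fii_{\iii_n}(B(x_0,\rho))$ into the $\eps$-neighborhood $X$ for large $n$, while $\fii_{\iii_n}(x_0)\in\fii_{\iii_n}(E)\subset E$ keeps it anchored to $E$. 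Taking this as the feasible $U\subset X$ with $U\cap E\ne\emptyset$, the saturation $V$ contains $U$ and hence meets $E$, which completes the proof.
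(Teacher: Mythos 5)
Your proposal is correct and follows essentially the same route as the paper: the key construction is the identical saturation $V=\bigcup_{\hhh}\fii_\hhh(U)$ of a feasible set $U\subset X$, with disjointness of $\fii_i(V)$ and $\fii_j(V)$ coming from the incomparability of $i\hhh$ and $j\kkk$. You merely spell out what the paper compresses --- the direction it calls trivial (your factorization at $\iii\land\jjj$ plus injectivity of $\fii_\kkk$) and the strong-OSC bookkeeping it leaves implicit, where your shrink-to-a-ball-and-iterate argument via \eqref{eq:kutistava_dist} correctly exploits that $\fii_\iii(x_0)\in E$ keeps the pushed-forward feasible set anchored to $E$.
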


\begin{proof}
  Defining $V = \bigcup_{\hhh \in I^*} \fii_\hhh(U)$, where $U \subset
  X$ is a feasible set for the OSC, we clearly have $\fii_i(V) \subset
  V \subset X$ as $i \in I$. If $i \ne j$, it holds that
  \begin{equation*}
    \fii_{i\hhh}(U) \cap \fii_{j\hhh}(U) = \emptyset
  \end{equation*}
  for every $\hhh \in I^*$ and hence
  \begin{equation*}
    \biggl( \bigcup_{\hhh \in I^*} \fii_{i\hhh}(U) \biggr) \cap
    \biggl( \bigcup_{\hhh \in I^*} \fii_{j\hhh}(U) \biggr) = \emptyset.
  \end{equation*}
  Noting that the other direction is trivial we have finished the
  proof.
\end{proof}

Given IFS, we say that $A \subset \Omega$ is \emph{forwards invariant} if
$\fii_i(A) \subset A$ as $i \in I$ and \emph{backwards invariant} if
$\fii_i^{-1}(A) \subset A$ as $i \in I$. For $A \subset \Omega$ we define
\begin{equation*}
  F_A = \bigcup_{\iii \bot \jjj} \fii_\iii^{-1} \bigl( \fii_\jjj(A) \bigr)
\end{equation*}
and for a semiconformal IFS we set
\begin{equation*}
  O_A = \bigl\{ x \in \Omega : D\dist(x,A) < \dist\bigl(x,F_A \cup (\R^d
  \setminus \Omega) \bigr) \bigr\}.
\end{equation*}
Here the constant $D\ge 1$ is the same as in the definition of the
semiconformal IFS. Observe that $F_A \subset \Omega$ is backwards invariant.

\begin{proposition} \label{thm:OA_feasible}
  Suppose a given IFS is semiconformal. If $U \subset \Omega$ is a
  feasible set for the OSC then $O_U \ne \emptyset$. Furthermore, if
  there exists a set $A \subset \Omega$ such that $O_A
  \ne \emptyset$ then $O_A$ is feasible.
\end{proposition}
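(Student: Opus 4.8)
The plan is to prove the two assertions separately, using two standard features of a semiconformal IFS: each $\fii_\iii$ is an open map (Brouwer's domain invariance, as noted after \eqref{eq:X_forward_invariant}), and the bounds \eqref{eq:congruent_function} give $|\fii_\iii^{-1}(u)-\fii_\iii^{-1}(v)| \le \as_\iii^{-1}|u-v|$ together with $\ys_\iii \le D\as_\iii$. I will freely write $F_A=\bigcup_{\iii\bot\jjj}\fii_\iii^{-1}(\fii_\jjj(A))$.

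For the first assertion, I would first observe that $F_U$ is \emph{open}: each $\fii_\jjj(U)$ is open since $\fii_\jjj$ is an open map, and $\fii_\iii^{-1}(\fii_\jjj(U))$ is then open by continuity of $\fii_\iii$, so $F_U$ is a union of open sets. Next, feasibility of $U$ gives $U\cap F_U=\emptyset$: if $u\in U$ and $u\in\fii_\iii^{-1}(\fii_\jjj(U))$ for some $\iii\bot\jjj$, then $\fii_\iii(u)\in\fii_\iii(U)\cap\fii_\jjj(U)=\emptyset$, which is impossible. Now $U$ and $F_U$ are disjoint open sets, so $U$ cannot be contained in $\overline{F_U}$: otherwise, since $U\cap F_U=\emptyset$, we would have $U\subset\overline{F_U}\setminus F_U$, the boundary of an open set, which has empty interior, contradicting that $U$ is nonempty and open. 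Hence $U\setminus\overline{F_U}\ne\emptyset$, and any $x$ in this set satisfies $\dist(x,U)=0$ while $\dist\bigl(x,F_U\cup(\R^d\setminus\Omega)\bigr)>0$, because $x\notin\overline{F_U}$ and $x$ lies in the open set $\Omega$. Therefore $x\in O_U$ and $O_U\ne\emptyset$.

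For the second assertion, $O_A$ is open because $x\mapsto\dist(x,A)$ and $x\mapsto\dist\bigl(x,F_A\cup(\R^d\setminus\Omega)\bigr)$ are continuous, and it is nonempty by hypothesis; so it remains to check feasibility. Assume $\iii\bot\jjj$ and that $z=\fii_\iii(x)=\fii_\jjj(y)$ for some $x,y\in O_A$. For $a\in A$ close to $y$ the point $\fii_\iii^{-1}(\fii_\jjj(a))$ lies in $F_A$, and the inverse bound for $\fii_\iii$ together with the upper bound for $\fii_\jjj$ gives $|x-\fii_\iii^{-1}(\fii_\jjj(a))|\le(\ys_\jjj/\as_\iii)|y-a|$; taking $a$ to approximate $\dist(y,A)$ yields $\dist(x,F_A)\le(\ys_\jjj/\as_\iii)\dist(y,A)$, and symmetrically $\dist(y,F_A)\le(\ys_\iii/\as_\jjj)\dist(x,A)$. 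On the other hand $x,y\in O_A$ force $D\dist(x,A)<\dist(x,F_A)$ and $D\dist(y,A)<\dist(y,F_A)$, since $\dist\bigl(\cdot,F_A\cup(\R^d\setminus\Omega)\bigr)\le\dist(\cdot,F_A)$. If $\dist(x,A)$ and $\dist(y,A)$ are both positive, multiplying the four inequalities and cancelling gives $D^2<(\ys_\iii/\as_\iii)(\ys_\jjj/\as_\jjj)\le D^2$, a contradiction; if one of them, say $\dist(y,A)$, vanishes, then $\dist(x,F_A)=0$ while $D\dist(x,A)<\dist(x,F_A)=0$ is already impossible. In every case $\fii_\iii(O_A)\cap\fii_\jjj(O_A)=\emptyset$, so $O_A$ is feasible.

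The step I expect to be the main obstacle is justifying the estimate $\dist(x,F_A)\le(\ys_\jjj/\as_\iii)\dist(y,A)$ rigorously, namely that the near-minimizing points $a\in A$ of $\dist(y,\cdot)$ actually satisfy $\fii_\jjj(a)\in\fii_\iii(\Omega)$, so that $\fii_\iii^{-1}(\fii_\jjj(a))$ is defined and belongs to $F_A$. This is precisely where the term $\R^d\setminus\Omega$ in the definition of $O_A$ is used: since $\fii_\iii(\Omega)$ is open and contains $z=\fii_\jjj(y)$, the images $\fii_\jjj(a)$ for $a$ near $y$ remain in $\fii_\iii(\Omega)$, and the condition $D\dist(y,A)<\dist(y,\R^d\setminus\Omega)$ keeps $y$ far enough inside $\Omega$ that the nearest points of $A$ lie within this range. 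Making this quantitative — controlling $\dist\bigl(z,\R^d\setminus\fii_\iii(\Omega)\bigr)$ from below in terms of $\dist(y,\R^d\setminus\Omega)$ via the semiconformal bounds \eqref{eq:congruent_function} — is the delicate part of the verification.
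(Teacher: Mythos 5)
Your proof of the first claim is correct, though more roundabout than necessary: since $F_U$ is disjoint from the open set $U$, every $x \in U$ already satisfies $\dist(x,F_U) \ge \dist(x,\R^d\setminus U) > 0$, so all of $U$ lies in $O_U$ without any appeal to openness of $F_U$ or to the nowhere-density of $\overline{F_U}\setminus F_U$; this is the paper's one-line argument. Likewise, the algebraic skeleton of your second part --- the two strict inequalities furnished by $x,y \in O_A$, multiplied together and compared with $(\ys_\iii/\as_\iii)(\ys_\jjj/\as_\jjj) \le D^2$, plus the degenerate case $\dist(y,A)=0$ --- is sound, and is essentially a symmetrized version of the paper's single chain ending in $\dist\bigl(z,\fii_\jjj(A)\bigr) > \dist\bigl(z,\fii_\iii(A)\bigr)$ with the roles of $\iii$ and $\jjj$ then exchanged.

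The gap is exactly the one you flagged, and your sketched repair points in the wrong direction: you cannot in general arrange $\fii_\jjj(a) \in \fii_\iii(\Omega)$ for the near-minimizing $a \in A$, and conditions on $\dist(y,\R^d\setminus\Omega)$ are beside the point, since the pullback runs through $\fii_\iii^{-1}$ toward $x$, so the relevant quantity is $\dist(x,\R^d\setminus\Omega)$. The correct move --- and the reason the union $F_A \cup (\R^d\setminus\Omega)$ appears in the definition of $O_A$ at all --- is to estimate $\dist\bigl(x,F_A\cup(\R^d\setminus\Omega)\bigr)$ rather than $\dist(x,F_A)$. The paper does this with Kirszbraun's theorem: extend $\fii_\iii^{-1}$, which is $\as_\iii^{-1}$-Lipschitz on $\fii_\iii(\Omega)$, to an $\as_\iii^{-1}$-Lipschitz map $\overline{\fii}_\iii \colon \Omega \to \R^d$ with $\overline{\fii}_\iii\bigl(\fii_\jjj(A)\bigr) \subset F_A \cup (\R^d\setminus\Omega)$, whence $\dist\bigl(z,\fii_\jjj(A)\bigr) \ge \as_\iii \dist\bigl(x,F_A\cup(\R^d\setminus\Omega)\bigr) > \as_\iii D \dist(x,A) \ge \dist\bigl(z,\fii_\iii(A)\bigr)$. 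Alternatively, your route can be patched without Kirszbraun: if $\fii_\jjj(a) \notin \fii_\iii(\Omega)$, the segment from $z=\fii_\iii(x)$ to $\fii_\jjj(a)$ meets $\partial\fii_\iii(\Omega)$ at some $w'$ with $|z-\fii_\jjj(a)| \ge |z-w'|$, and the Lipschitz extension of $\fii_\iii^{-1}$ to the closure maps $w'$ into $\partial\Omega \subset \R^d\setminus\Omega$ (if its image $u'$ were in $\Omega$, continuity would give $\fii_\iii(u')=w' \in \fii_\iii(\Omega)$, contradicting $w' \in \partial\fii_\iii(\Omega)$, $\fii_\iii(\Omega)$ being open by domain invariance); hence $\ys_\jjj|y-a| \ge |z-\fii_\jjj(a)| \ge \as_\iii\dist(x,\R^d\setminus\Omega)$, so in both cases $\dist\bigl(x,F_A\cup(\R^d\setminus\Omega)\bigr) \le (\ys_\jjj/\as_\iii)\dist(y,A)$, after which your multiplication argument goes through verbatim because membership in $O_A$ bounds precisely the distance to the union.
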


\begin{proof}
  Let $U \subset \Omega$ be a nonempty open set for which $\fii_\iii(U)
  \cap \fii_\jjj(U) = \emptyset$ whenever $\iii \bot \jjj$. It
  follows that $U \cap F_U = \emptyset$ and since $U$ is open, we get
  $U \subset O_U$.

  Conversely, it suffices to show that $\fii_\iii(O_A) \cap
  \fii_\jjj(O_A) = \emptyset$ as $\iii
  \bot \jjj$. Suppose contrarily that there are $\iii,\jjj \in I^*$
  and $x,y \in O_A$ such that $\iii \bot \jjj$ and $\fii_\iii(x) =
  \fii_\jjj(y) =: z$. 
  Observe that the inverse mapping $\fii_\iii^{-1} \colon
  \fii_\iii(\Omega) \to \Omega$ has a Lipschitz constant
  $\as_\iii^{-1}$ for each $\iii \in I^*$. According to
  Kirszbraun's theorem \cite[\S 2.10.43]{Federer1969}, there exists a
  Lipschitz extension $\overline{\fii}_\iii \colon \Omega \to \R^d$
  for the mapping $\fii_\iii^{-1}$ having the same Lipschitz constant.
  Since $\overline{\fii}_\iii \bigl( \fii_\jjj(A) \bigr) \subset
  F_A \cup (\R^d \setminus \Omega)$ and $x \in O_A$, we have
  \begin{align*}
    \dist\bigl( z,\fii_\jjj(A) \bigr) &= \dist\bigl(
    \fii_\iii(x),\fii_\jjj(A) \bigr) \ge \as_\iii \dist\bigl(
    x,\overline{\fii}_\iii \bigl( \fii_\jjj(A) \bigr) \bigr) \\
    &\ge \as_\iii \dist\bigl( x,F_A \cup (\R^d \setminus \Omega)
    \bigr) > \as_\iii D\dist(x,A) \\
    &\ge \as_\iii D\ys_\iii^{-1} \dist\bigl( \fii_\iii(x),\fii_\iii(A)
    \bigr) \ge \dist\bigl( z,\fii_\iii(A) \bigr)
  \end{align*}
  using \eqref{eq:congruent_function}. Changing the roles of $\iii$
  and $\jjj$ above, we end up with a contradiction.
  The proof is finished.
\end{proof}

We say that a tractable IFS $\{ \fii_i : i \in I \}$ satisfies the
ball condition if the corresponding CMC $\{ \fii_\iii(A) :
\iii \in I^* \}$ satisfies the (uniform) ball condition.
By Lemma \ref{thm:congruent_ifs}, this defines the ball
condition also for a semiconformal IFS. In this case, we
may choose $A$ to be the invariant set $E$.
Observe that if the IFS is semiconformal and there exists an open set
$W \subset \Omega$ such that for each $r>0$ we have
\begin{equation*}
  \fii_\iii(W) \cap \fii_\jjj(W) = \emptyset
\end{equation*}
for any two distinct $\iii,\jjj \in Z(r)$ then the ball
condition is satisfied. See Example \ref{ex:lohhari}. In
particular, the OSC implies the ball condition in the semiconformal
case. See also \cite[Proposition 3.6]{Kaenmaki2004}. The following
theorem says that, in fact, the ball condition and the strong OSC are
equivalent. Example \ref{ex:suorakaiteet3} shows that this is not true
for tractable IFS's.

\begin{theorem} \label{thm:IFS_separation_condition}
  A semiconformal IFS satisfies the ball condition exactly
  when $O_E \cap E \ne \emptyset$.
\end{theorem}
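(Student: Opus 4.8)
The plan is to prove the two implications separately, recording first that by Lemma \ref{thm:congruent_ifs} the collection $\{ \fii_\iii(E) : \iii \in I^* \}$ is a semiconformal CMC with limit set $E$, that the mappings $\fii_\iii|_E$ are precisely those of Lemma \ref{thm:kuvaukset}, and that the ball condition for the IFS is by definition the ball condition for this CMC (taken with $A=E$). Throughout I abbreviate $X_\iii = \fii_\iii(E)$, and I use that $\diam(E)>0$.

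The implication $O_E \cap E \ne \emptyset \Rightarrow$ ball condition is immediate from the machinery already in place. If $O_E \cap E \ne \emptyset$ then in particular $O_E \ne \emptyset$, so Proposition \ref{thm:OA_feasible} shows that $O_E$ is a feasible set for the OSC; hence the IFS satisfies the OSC and, as noted just before the statement, the OSC implies the ball condition in the semiconformal case.

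For the converse the key input is Corollary \ref{thm:schief_cor}: the ball condition furnishes $\delta>0$ and a point $x \in E$ with $B(\fii_\iii(x), \delta\diam(X_\iii)) \cap B(\fii_\jjj(x), \delta\diam(X_\jjj)) = \emptyset$ whenever $\iii \bot \jjj$. I claim this very $x$ lies in $O_E$. Since $x \in E \subset \Omega$ and $\Omega$ is open, $\dist(x, \R^d \setminus \Omega)>0$, so it suffices to produce $\rho>0$ with $B(x,\rho) \cap F_E = \emptyset$, i.e. with $\fii_\iii(B(x,\rho)\cap\Omega) \cap \fii_\jjj(E) = \emptyset$ for all $\iii\bot\jjj$. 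By the upper bound in \eqref{eq:congruent_function} one has $\fii_\iii(B(x,\rho)\cap\Omega) \subset B(\fii_\iii(x), \ys_\iii\rho)$, and since $\ys_\iii \le D\as_\iii \le D\diam(X_\iii)/\diam(E)$, it is enough to bound $\dist(\fii_\iii(x), \fii_\jjj(E))$ from below by a fixed multiple of $\diam(X_\iii)$ and then to take $\rho$ small.

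The main obstacle is that Corollary \ref{thm:schief_cor} separates only the single points $\fii_\iii(x)$ and $\fii_\jjj(x)$, whereas I must keep $\fii_\iii(x)$ away from the whole set $\fii_\jjj(E)$, whose diameter is comparable to $\diam(X_\jjj)$ and need not be small against $\delta\diam(X_\iii)$. I would resolve this by a zooming argument: given $z \in \fii_\jjj(E)$, write $z = \fii_\jjj(v)$ with $v = \pi(\kkk) \in E$ for some $\kkk \in I^\infty$ and choose $n$ so large that $\diam(X_{\jjj(\kkk|_n)}) \le \tfrac{\delta}{2}\diam(X_\iii)$; then $z \in X_{\jjj(\kkk|_n)}$ and $\fii_{\jjj(\kkk|_n)}(x) \in X_{\jjj(\kkk|_n)}$, while $\iii \bot \jjj(\kkk|_n)$, so Corollary \ref{thm:schief_cor} gives $|\fii_\iii(x) - \fii_{\jjj(\kkk|_n)}(x)| \ge \delta\diam(X_\iii)$ and hence $|\fii_\iii(x) - z| \ge \delta\diam(X_\iii) - \diam(X_{\jjj(\kkk|_n)}) \ge \tfrac{\delta}{2}\diam(X_\iii)$. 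As $z \in \fii_\jjj(E)$ was arbitrary this yields $\dist(\fii_\iii(x), \fii_\jjj(E)) \ge \tfrac{\delta}{2}\diam(X_\iii)$ for every $\iii\bot\jjj$, and then any $\rho \in (0, \delta\diam(E)/(2D))$ with $B(x,\rho)\subset\Omega$ forces $\ys_\iii\rho < \dist(\fii_\iii(x),\fii_\jjj(E))$, completing the argument and placing $x$ in $O_E \cap E$.
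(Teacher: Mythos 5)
Your proposal is correct and follows essentially the same route as the paper: the easy direction via Proposition \ref{thm:OA_feasible} combined with the remark that the OSC implies the ball condition for a semiconformal IFS, and the hard direction with Corollary \ref{thm:schief_cor} as the key input. Your quantitative zooming into deep cylinders $\jjj(\kkk|_n)$ is just an explicit version of the paper's observation that $\{ \fii_{\jjj\hhh}(x) : \hhh \in I^* \}$ is dense in $\fii_\jjj(E)$, and your push-forward estimate $\fii_\iii\bigl( B(x,\rho) \cap \Omega \bigr) \subset B\bigl( \fii_\iii(x), \ys_\iii\rho \bigr)$ is equivalent to the paper's pull-back of the separation via the upper Lipschitz bound of Lemma \ref{thm:congruent_ifs}.
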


\begin{proof}
  Let us first prove that the ball condition implies $O_E \cap
  E \ne \emptyset$. Recall that $X$ 
  is the closed $\eps$-neighborhood of $E$. We may further assume that
  \begin{equation*}
    F := \bigcup_{\iii \bot \jjj} \fii_\iii^{-1}\bigl( \fii_\jjj(E) \bigr)
    \cap X \ne \emptyset
  \end{equation*}
  seeing that $F = \emptyset$ implies $\dist(E,F_E) \ge \eps$, which
  gives $E \subset O_E$. It is now sufficient to find a point $x \in
  E$ with $\dist(x,F) > 0$.

%  Observe that by Lemma \ref{thm:congruent_ifs}, the collection $\{
%  \fii_\iii(X) : \iii \in I^* \}$ is a congruent CMC and satisfies
%  the uniform separation condition. 
  According to Theorem
  \ref{thm:bounded_separation} and Corollary \ref{thm:schief_cor},
  there exist a point $x \in E$ and a constant $\delta > 0$ such that
  \begin{equation*}
    |\fii_\iii(x) - \fii_{\jjj\hhh}(x)| > \delta\diam\bigl(
    \fii_\iii(X) \bigr)
  \end{equation*}
  whenever $\iii \bot \jjj$ and $\hhh \in I^*$. It is easy to see that
  the set $\{ \fii_{\jjj\hhh}(x) : \hhh \in I^* \}$ is dense in
  $\fii_\jjj(E)$. So, in fact, we have
  \begin{equation*}
    \dist\bigl( \fii_\iii(x), \bigcup_{\iii\bot\jjj} \fii_\jjj(E)
    \bigr) \ge \delta \diam\bigl( \fii_\iii(X) \bigr)
  \end{equation*}
  for each $\iii \in I^*$, which in turn implies that
  \begin{equation*}
    |\fii_\iii(x) - \fii_\iii(y)| \ge \delta \diam\bigl( \fii_\iii(X)
    \bigr) 
  \end{equation*}
  for each $y \in \fii_\iii^{-1}\bigl( \fii_\jjj(E) \bigr)$ when
  $\iii\bot\jjj$. On the other hand, Lemma
  \ref{thm:congruent_ifs} shows that there is a constant
  $C>0$ such that
  \begin{equation*}
    |\fii_\iii(x) - \fii_\iii(y)| \le C\diam\bigl( \fii_\iii(X) \bigr)
    |x-y|
  \end{equation*}
  whenever $x,y \in X$ and $\iii \in I^*$. Combining the inequalities
  above gives
  \begin{equation*}
    |x-y| \ge C^{-1}\delta
  \end{equation*}
  for each $y \in F$ and consequently $\dist(x,F)>0$ as desired.

  Since the other direction follows immediately from Proposition
  \ref{thm:OA_feasible}, the proof is finished.
\end{proof}

The following proposition generalizes \cite[Corollary 2.3]{Schief1994}
and \cite[Corollary 1.2]{PeresRamsSimonSolomyak2001} into the setting
of semiconformal IFS's. Although the argument used here is similar to the
proof of \cite[Corollary 1.2]{PeresRamsSimonSolomyak2001}, we give the
details for the convenience of the reader.

\begin{proposition} \label{thm:closure_interior}
  If a semiconformal IFS satisfies the OSC and $\dimh(E) = d$ then the
  invariant set $E$ is the closure of its interior.
\end{proposition}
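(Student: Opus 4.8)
The plan is to convert the dimension hypothesis into positive Lebesgue measure, then manufacture a single interior point by a blow‑up argument at a density point, and finally use the self‑similar structure to scatter interior points densely through $E$. First I would record the consequences of the hypotheses: since the IFS is semiconformal and satisfies the OSC, the discussion preceding the proposition shows the OSC forces the ball condition, and for semiconformal CMC's the finite and uniform finite clustering properties coincide. Hence Corollary \ref{thm:tractable_corollary} together with Theorem \ref{thm:regular} and Remark \ref{rem:bounded_hausdorff} give $\dimh(E)=t$ with $P(t)=0$ and $\HH^t(E)>0$. The assumption $\dimh(E)=d$ forces $t=d$, so $\HH^d(E)>0$, and as $\HH^d$ is comparable to Lebesgue measure $\LL^d$ on $\R^d$ we obtain $\LL^d(E)>0$. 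I also fix the feasible open set $V$ of the classical shape supplied by Lemma \ref{thm:classical_OSC} (so $\fii_i(V)\subset V$, the images are disjoint, $\fii_\iii(V)\cap\fii_\jjj(V)=\emptyset$ for $\iii\bot\jjj$, and $V\subset X$), and I note $E\subset\overline V$ because $\fii_{\iota|_n}(x^*)\to\pi(\iota)$ for any fixed $x^*\in V$.

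Next I would produce an interior point. Taking $A=E$ as in Lemma \ref{thm:congruent_ifs}, the cylinders are $\fii_\iii(E)$ and the generating maps are $\fii_\iii|_E$. Pick a Lebesgue density point $x_0\in E$ and an address $\iii\in I^\infty$ with $\pi(\iii)=x_0$, and for each $n$ consider the inverse branch $\psi_n=\fii_{\iii|_n}^{-1}$, which by \eqref{eq:congruent_function} is bi-Lipschitz with constants comparable to $\diam(\fii_{\iii|_n}(E))^{-1}$. Choosing $r_n$ to be a small fixed multiple of $\diam(\fii_{\iii|_n}(E))$, small enough that $B(x_0,r_n)$ lies in the domain $\fii_{\iii|_n}(\Omega)$, the image $\psi_n\bigl(B(x_0,r_n)\bigr)$ contains a ball $B(\psi_n(x_0),\rho)$ of a fixed radius $\rho>0$ independent of $n$, while the density hypothesis makes the rescaled defect vanish, $\LL^d\bigl(\psi_n(B(x_0,r_n)\setminus E)\bigr)\to 0$. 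Writing $\Lambda_n=\psi_n\bigl(E\cap B(x_0,r_n)\bigr)$ and decomposing $E\cap B(x_0,r_n)=\bigcup_{\jjj\in I^n}\bigl(\fii_\jjj(E)\cap B(x_0,r_n)\bigr)$, I claim $\Lambda_n\cap V\subset E$: the term $\jjj=\iii|_n$ maps back into $E$, whereas for $\jjj\bot\iii|_n$ the OSC rules out intrusion into $V$, since $\fii_{\iii|_n}(V)\cap\fii_\jjj(V)=\emptyset$, $\fii_\jjj(E)\subset\overline{\fii_\jjj(V)}$, and an open set disjoint from $\fii_\jjj(V)$ is disjoint from its closure.

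With this in hand I would conclude. Passing to a subsequence along which $\psi_n(x_0)=\pi(\sigma^n\iii)\to z_*\in E\subset\overline V$, the ball $B(z_*,\rho/2)$ eventually sits inside $B(\psi_n(x_0),\rho)$, so $\LL^d\bigl(B(z_*,\rho/2)\setminus\Lambda_n\bigr)\to 0$. If some point of the open set $B(z_*,\rho/2)\cap V$ avoided $E$, a small ball about it would lie in $V$ and hence be disjoint from $\Lambda_n$ (because $\Lambda_n\cap V\subset E$), contradicting the vanishing defect; thus $B(z_*,\rho/2)\cap V\subset E$. Since $z_*\in\overline V$ and $V$ is open, $B(z_*,\rho/2)\cap V$ is a nonempty open subset of $E$, so $\inter(E)\neq\emptyset$. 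To finish, fix a ball $B\subset E$; for any $x=\pi(\iota)\in E$ and $\eps>0$ choose $n$ with $\diam(\fii_{\iota|_n}(E))<\eps$, so $\fii_{\iota|_n}(E)\subset B(x,\eps)$, and since $\fii_{\iota|_n}$ is an injective contraction, hence open by the domain invariance theorem, $\fii_{\iota|_n}(B)$ is a nonempty open subset of $E$ inside $B(x,\eps)$. Therefore every neighborhood of every point of $E$ meets $\inter(E)$, giving $E\subset\overline{\inter(E)}$; the reverse inclusion is trivial, so $E=\overline{\inter(E)}$.

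The hard part will be the blow‑up step: reconciling the rescaling ratio with the finite size of the domain $\fii_{\iii|_n}(\Omega)$ while still extracting a limiting ball of a scale‑independent radius, and, above all, exploiting the OSC to confine the images of the competing cylinders $\fii_\jjj(E)$, $\jjj\bot\iii|_n$, to the complement of $V$. That confinement is what upgrades the purely measure‑theoretic statement ``$E$ fills the ball up to a null set'' — which alone never yields interior, as fat Cantor sets show — into the topological conclusion that an honest open subset of $E$ appears inside $V$.
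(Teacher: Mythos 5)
Your proof is correct, but it takes a genuinely different route from the paper's. The paper argues globally and never touches density points: since the OSC yields the uniform finite clustering property, Theorem \ref{thm:regular} gives $P(d)=0$, whence $\sum_{\iii\in I^n}\as_\iii^d\ge c>0$ uniformly in $n$ by Lemma \ref{thm:summa_arvio}; taking the classical feasible set $V$ of Lemma \ref{thm:classical_OSC} and setting $T=V\setminus\bigcup_{i\in I}\fii_i(V)$, the sets $\fii_\iii(T)$, $\iii\in I^*$, are pairwise disjoint subsets of $X$, so $\infty>\HH^d(X)\ge\HH^d(T)\sum_n\sum_{\iii\in I^n}\as_\iii^d$ forces $\HH^d(T)=0$; then $V\setminus\bigcup_{i\in I}\fii_i(\overline{V})$ is a null open set, hence empty, so $\overline{V}$ satisfies the invariance equation and $E=\overline{V}$ by uniqueness, from which closure-of-interior is immediate since $V\subset\inter(E)$. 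You instead convert $\dimh(E)=d$ into $\LL^d(E)>0$ and blow up at a Lebesgue density point along inverse branches, using the OSC (through $E\subset\overline{V}$, the openness of $\fii_{\iii|_n}(V)$, and the fact that an open set disjoint from $\fii_\jjj(V)$ misses its closure) to confine the competing cylinders $\fii_\jjj(E)$, $\jjj\bot\iii|_n$, outside $V$; this correctly upgrades the vanishing measure defect on a fixed ball to the topological inclusion $B(z_*,\rho/2)\cap V\subset E$, and your final propagation by the open maps $\fii_{\iota|_n}$ is standard. The step you flagged does go through: points of $\partial B(x_0,r_n)$ map under $\psi_n$ to distance at least $\ys_{\iii|_n}^{-1}r_n\ge\kappa\diam(E)/D$ from $\psi_n(x_0)$ by \eqref{eq:congruent_function}, and invariance of domain (already invoked in the paper for the injective $\fii_i$) shows the image of the open ball contains the component of $\psi_n(x_0)$ in the complement of the image of the sphere, hence a ball of scale-free radius; likewise $B(x_0,r_n)\subset\fii_{\iii|_n}(\Omega)$ for small $\kappa$ because the image of the $\eps$-neighborhood $X$ of $E$ contains $B(x_0,\as_{\iii|_n}\eps)$ by the same argument. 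Comparing the two: the paper's proof is shorter, needs only $P(d)=0$ rather than $\HH^d(E)>0$, and yields the stronger structural conclusion $E=\overline{V}$ for every feasible set of the classical form; yours is local, uses the full strength of positive Lebesgue measure, and exhibits concretely how the OSC excludes the fat-Cantor phenomenon near a density point, at the cost of the bi-Lipschitz and domain-invariance bookkeeping above.
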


\begin{proof}
  As the OSC implies the uniform finite clustering property, we have
  $P(d)=0$. Hence there exists a constant $c>0$ such that
  \begin{equation}
    \label{eq:closint1}
    \sum_{\iii \in I^n} \as_\iii^d \ge D^{-d}\diam(X)^{-d} \sum_{\iii
      \in I^n} \diam\bigl( \fii_\iii(X) \bigr)^d \ge c,
  \end{equation}
  see the defining equation \eqref{eq:topo2} and Lemma
  \ref{thm:summa_arvio}. Choose the forwards invariant feasible set $V
  \subset X$ as in Lemma \ref{thm:classical_OSC} and consider the set
  \begin{equation*}
    T = V \setminus \bigcup_{i \in I} \fii_i(V).
  \end{equation*}
  The facts that $\fii_\iii(T) \subset \fii_\iii(V)$ and $\fii_\iii(T)
  \cap \fii_{\iii\jjj}(V) = \emptyset$ for every $\iii \in I^*$ and
  $\jjj \in I^*$ easily lead to the conclusion that $\fii_\iii(T) \cap
  \fii_\jjj(T) = \emptyset$ whenever $\iii \ne \jjj$. Furthermore,
  since $\fii_\iii(T) \subset X$ for each $\iii \in I^*$, we have
  \begin{equation} \label{eq:closint2}
  \begin{split}
    \infty > \HH^d(X) &\ge \HH^d\biggl( \bigcup_{\iii \in I^*}
    \fii_\iii(T) \biggr)
    = \sum_{n \in \N} \sum_{\iii \in I^n} \HH^d\bigl( \fii_\iii(T)
    \bigr) \\ &\ge \HH^d(T) \sum_{n \in \N} \sum_{\iii \in I^n} \as_\iii^d.
  \end{split}
  \end{equation}
  Now \eqref{eq:closint1} and \eqref{eq:closint2} together imply that
  $\HH^d(T) = 0$. This in turn shows that the set
  \begin{equation*}
    V \setminus \overline{\bigcup_{i \in I} \fii_i(V)} = V \setminus
    \bigcup_{i \in I} \fii_i(\overline{V})
  \end{equation*}
  is empty, being an open set with zero measure. Here with the
  notation $\overline{A}$, we mean the closure of a given set
  $A$. This means that $\overline{V} = \bigcup_{i \in I}
  \fii_i(\overline{V})$, giving $E = \overline{V}$ by the uniqueness
  of the invariant set. The proof is complete.
\end{proof}

A similitude IFS, introduced in \cite{Hutchinson1981}, is the most obvious
example of a semiconformal IFS. Suppose that for each $i \in I$ there is a
mapping $\fii_i \colon \R^d \to \R^d$ and a constant $0<s_i<1$ such that
\begin{equation*}
%  \label{eq:similitude}
  |\fii_i(x) - \fii_i(y)| = s_i|x-y|
\end{equation*}
whenever $x,y \in \R^d$. Now for a closed ball $B$ centered at the
origin, we have $\fii_i(B) \subset B$ whenever $i \in I$ provided that
the radius of $B$ is chosen large enough. The collection $\{ \fii_i :
i \in I \}$ is then an IFS and we call it a \emph{similitude IFS}.

The following proposition is a slightly more general result than
\cite[Theorem 1]{BandtHungRao2006}.

\begin{proposition}
  Given a similitude IFS, the set $O_A$ is forwards invariant and
  feasible for the OSC provided that $O_A \ne \emptyset$ and $A \subset
  X$ is forwards invariant.
\end{proposition}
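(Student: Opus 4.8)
The plan is to handle the two assertions separately, exploiting that a similitude IFS is a semiconformal IFS (indeed the most basic one) with $\Omega = \R^d$, so that each $\fii_i$ is a \emph{bijective} similitude of $\R^d$ satisfying $|\fii_i(u) - \fii_i(v)| = s_i|u-v|$. In particular, in the defining set
\begin{equation*}
  O_A = \bigl\{ x \in \R^d : D\dist(x,A) < \dist\bigl(x,F_A \cup (\R^d \setminus \Omega)\bigr) \bigr\}
\end{equation*}
the term $\R^d \setminus \Omega$ is empty. Feasibility of $O_A$ for the OSC is then immediate: since $O_A \ne \emptyset$ by hypothesis, Proposition \ref{thm:OA_feasible} applies and yields that $O_A$ is feasible. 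Thus the genuine content of the statement is the forwards invariance $\fii_i(O_A) \subset O_A$ for every $i \in I$.

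To establish forwards invariance I would fix $i \in I$ and $x \in O_A$, so that $D\dist(x,A) < \dist(x,F_A)$, and show $\fii_i(x) \in O_A$ by bounding $\dist(\fii_i(x),A)$ from above and $\dist(\fii_i(x),F_A)$ from below. The upper bound is routine: since $A$ is forwards invariant, $\fii_i(A) \subset A$, whence, using the similitude scaling,
\begin{equation*}
  \dist(\fii_i(x),A) \le \inf_{a \in A}|\fii_i(x) - \fii_i(a)| = s_i\dist(x,A).
\end{equation*}

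The lower bound is the crux, and is where the similitude hypothesis is genuinely needed. Recall from the text that $F_A$ is backwards invariant, i.e. $\fii_i^{-1}(F_A) \subset F_A$. Because $\fii_i$ is a bijection of $\R^d$, every $z \in F_A$ may be written as $z = \fii_i(w)$ with $w = \fii_i^{-1}(z)$, and backwards invariance forces $w \in F_A$; the similitude scaling then gives $|\fii_i(x) - z| = s_i|x - w| \ge s_i\dist(x,F_A)$, and taking the infimum over $z \in F_A$ yields $\dist(\fii_i(x),F_A) \ge s_i\dist(x,F_A)$. Combining the two estimates,
\begin{equation*}
  D\dist(\fii_i(x),A) \le D s_i\dist(x,A) = s_i\bigl(D\dist(x,A)\bigr) < s_i\dist(x,F_A) \le \dist(\fii_i(x),F_A),
\end{equation*}
so $\fii_i(x) \in O_A$, which completes the proof. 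The main obstacle, and the reason the statement is restricted to similitude IFS's rather than arbitrary semiconformal ones, is precisely the surjectivity of $\fii_i$ used in the lower bound: without it one cannot pull a generic point of $F_A$ back into $F_A$ via $\fii_i^{-1}$ and invoke backwards invariance.
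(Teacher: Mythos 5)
Your proof is correct and uses essentially the same argument as the paper: the two estimates $\dist(\fii_i(x),A) \le s_i\dist(x,A)$ (from forwards invariance of $A$) and $\dist(\fii_i(x),F_A) \ge s_i\dist(x,F_A)$ (from backwards invariance of $F_A$ and the similitude scaling, where surjectivity is indeed the key point), with feasibility delegated to Proposition \ref{thm:OA_feasible}. The paper merely arranges the same chain of inequalities contrapositively, assuming $\fii_i(x) \notin O_A$ and deriving $\dist(x,F_A) > \dist(x,F_A)$, so the difference is purely presentational.
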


\begin{proof}
  According to Proposition \ref{thm:OA_feasible}, it suffices to show
  that $\fii_i(O_A) \subset O_A$ as $i \in I$. Assume on the
  contrary that there exist $i \in I$ and $x \in O_A$ such that
  $\fii_i(x) \notin O_A$, that is,
  \begin{equation*}
    D\dist\bigl( \fii_i(x),A \bigr) \ge \dist\bigl( \fii_i(x),F_A
    \bigr).
  \end{equation*}
  Notice that here $D$ can be chosen to be one.
  Therefore, since $A \subset \fii_i^{-1}(A)$ and $\fii_i^{-1}(F_A)
  \subset F_A$ for every $i \in I$, we obtain
  \begin{align*}
    \dist(x,F_A) &> D\dist(x,A) \ge D\dist\bigl( x,\fii_i^{-1}(A) \bigr)
    \\ &= s_i^{-1} D\dist\bigl( \fii_i(x),A \bigr) \ge s_i^{-1}
    \dist\bigl( \fii_i(x),F_A \bigr) \\ &= \dist\bigl(
    x,\fii_i^{-1}(F_A) \bigr) \ge \dist(x,F_A).
  \end{align*}
  This contradiction finishes the proof.
\end{proof}

The following corollary summarizes the main implications shown for a
semiconformal IFS. Notice that the topological pressure here is well
defined via Lemma \ref{thm:congruent_ifs} as it does not depend on the
choice of the corresponding forwards invariant set.

\begin{corollary} \label{thm:classical_result}
  For a semiconformal IFS, the following conditions are equivalent:
  \begin{enumerate}
  \item The ball condition.
  \item The open set condition.
  \item The strong open set condition.
  \item $\HH^t(E) > 0$, where $t$ is the zero of the topological
    pressure.
  \end{enumerate}
\end{corollary}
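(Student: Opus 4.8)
The plan is to deduce all four equivalences from results already at our disposal, using the invariant set $E$ itself as the forwards invariant set underlying the associated CMC. First I would record that, by Lemma~\ref{thm:congruent_ifs}, a semiconformal IFS gives rise to a semiconformal (hence tractable) CMC $\{\fii_\iii(E):\iii\in I^*\}$ whose limit set is exactly $E$, and that the ball condition for the IFS is by definition the ball condition for this CMC. Consequently Corollary~\ref{thm:tractable_corollary}, applied to this CMC, already yields the equivalence of (1) and (4), with $t$ the zero of the topological pressure (which is well defined by the remark preceding the statement). Here one should only note that for a semiconformal CMC the finite clustering property and its uniform counterpart coincide, so the ``ball condition'' and ``uniform ball condition'' listed in that corollary agree with the single ball condition of the present statement.

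It then remains to intertwine (1), (2) and (3), which I would do by proving the cycle $(2)\Rightarrow(1)\Rightarrow(3)\Rightarrow(2)$. The implication $(3)\Rightarrow(2)$ is immediate, since a feasible set intersecting $E$ is in particular feasible. For $(2)\Rightarrow(1)$ I would take a feasible set $U\subset\Omega$ for the OSC and exploit the observation preceding Theorem~\ref{thm:IFS_separation_condition}: it suffices to verify its hypothesis with $W=U$, namely that $\fii_\iii(U)\cap\fii_\jjj(U)=\emptyset$ for any two distinct $\iii,\jjj\in Z(r)$ and every $r>0$. The one point to nail down is that distinct elements of $Z(r)$ are incomparable: if $\iii$ were a prefix of $\jjj$ then $X_\jjj\subset X_{\jjj^-}\subset X_\iii$ would force $\diam(X_{\jjj^-})\le\diam(X_\iii)\le r$, contradicting $\diam(X_{\jjj^-})>r$. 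Hence the OSC disjointness $\fii_\iii(U)\cap\fii_\jjj(U)=\emptyset$ for $\iii\bot\jjj$ applies verbatim and the ball condition follows.

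For the closing implication $(1)\Rightarrow(3)$ I would invoke the two results that carry the real weight of the argument. By Theorem~\ref{thm:IFS_separation_condition}, the ball condition is equivalent to $O_E\cap E\ne\emptyset$; in particular $O_E\ne\emptyset$, whence the second half of Proposition~\ref{thm:OA_feasible} shows that $O_E$ is feasible for the OSC. Since moreover $O_E\cap E\ne\emptyset$, this feasible set meets $E$, which is exactly the strong OSC. This closes the cycle, and together with $(1)\Leftrightarrow(4)$ it gives the full equivalence.

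The routine parts of the proof are entirely delegated to the cited results, so I do not anticipate a serious obstacle. The only place demanding care is the hypothesis check in $(2)\Rightarrow(1)$ --- recognizing that the OSC disjointness over all incomparable pairs specializes to the $Z(r)$-comparison needed to apply the ball-condition observation. The genuine difficulty of the equivalence is concentrated in Theorem~\ref{thm:IFS_separation_condition} and Proposition~\ref{thm:OA_feasible}, both of which I am free to assume.
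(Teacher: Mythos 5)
Your proposal is correct and follows essentially the same route the paper intends: the corollary is stated as a summary of implications already shown, namely $(1)\Leftrightarrow(4)$ via Lemma \ref{thm:congruent_ifs} and Corollary \ref{thm:tractable_corollary}, $(2)\Rightarrow(1)$ via the observation preceding Theorem \ref{thm:IFS_separation_condition}, $(1)\Rightarrow(3)$ via Theorem \ref{thm:IFS_separation_condition} together with Proposition \ref{thm:OA_feasible}, and the trivial $(3)\Rightarrow(2)$. Your extra check that distinct elements of $Z(r)$ are incomparable is a correct and worthwhile detail the paper leaves implicit.
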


\section{Examples}

In the last section, we illustrate the preceding theory by providing
the reader with several examples. We begin by showing that the uniform
finite clustering property does not imply the bounded overlapping
property.

\begin{example} \label{ex:ekaesim}
  The standard Cantor $\tfrac13$-set $E$ can be defined as the
  invariant set of the similitude IFS formed by the mappings
  \begin{align*}
    \fii_0(x) &= \tfrac13 x, \\
    \fii_1(x) &= \tfrac13 x + \tfrac23
  \end{align*}
  on $\R$. It is well known that $\HH^t(E)=1$, where $t = \log 2/\log
  3$, see \cite[Theorem 1.14]{Falconer1985}.
%  Consequently,
%  \begin{equation*}
%    \HH^t(E \cap [\tfrac23,1]) = \HH^t\bigl( \fii_1(E) \bigr) = \bigl(
%    \tfrac13 \bigr)^t \HH^t(E) = \tfrac12.
%  \end{equation*}
  Consider now the CMC $\{ \fii_\iii(X) : \iii \in I^* \}$, where $X =
  [0,3]$ and $I=\{ 0,1 \}$. This CMC is tractable by Lemma
  \ref{thm:IFS_controlled_tractable}.  The positivity of $\HH^t(E)$ implies the
  uniform finite clustering property by Theorem \ref{thm:hausdorff_bounded}.
  However, using the facts $1 \in \fii_0(X)$ and $\fii_1(1)=1$,
  we infer by induction that $1 \in \fii_{1^k0}(X)$ for every $k \in \N$,
  where $1^k = (1,\ldots,1) \in I^k$ for each $k$. Since the infinite
  set $\{ 1^k0 : k \in \N \}$ is incomparable, we conclude that the
  bounded overlapping property is not satisfied.
%  However, as we have
%  \begin{equation*}
%    \fii_0(X) \cap \fii_1(X) = [0,1] \cap [\tfrac23,\tfrac53] =
%    [\tfrac23,1]
%  \end{equation*}
%  and hence
%  \begin{equation*}
%    \HH^t|_E\bigl( \fii_0(X) \cap \fii_1(X) \bigr) > 0,
%  \end{equation*}
%  Since for each $\iii,\jjj \in I^*$ and $\hhh \in I^\infty$ it holds
%  that $\pi(\iii\hhh) \in \fii_{\iii\jjj}(X)$ whenever $\pi(\hhh) \in
%  \fii_\jjj(X)$, Proposition \ref{thm:semileikkaus} and
%  Remark \ref{rem:bounded_hausdorff} show that the CMC does not
%  satisfy the bounded overlapping property.
\end{example}

\begin{example} \label{ex:nontractable}
  In this example, we give a CMC which shows that the
  assumption concerning the relative positions of the sets
  $X_\iii$ in the last claim of Proposition \ref{thm:semileikkaus} is
  indispensable. Besides this, it is also an example of a nontractable CMC.
  Using the mappings $\fii_\iii$, $\iii \in I^*$, from the previous
  example, set
  \begin{align*}
    X_0 &= [0,1] \times [0,1], \\
    X_1 &= [0,1] \times [-1,0]
  \end{align*}
  and for $j \in I$ and $\iii \in I^*$
  \begin{equation*}
    X_{j\iii} =
    \begin{cases}
      \fii_\iii([0,1]) \times [0,3^{-|\iii|}], &\text{if } j=0 \\
      \fii_\iii([0,1]) \times [-3^{-|\iii|},0], &\text{if } j=1.
    \end{cases}
  \end{equation*}
  The CMC determined by these squares obviously has the limit $E = E_x
  \times \{ 0 \} \subset \R^2$, where $E_x \subset \R$ is the standard Cantor
  $\tfrac{1}{3}$-set. It is equally obvious that the uniform ball
  condition is satisfied, which, according to Theorems
  \ref{thm:regular} and \ref{thm:bounded_separation} and Remark
  \ref{rem:bounded_hausdorff}, implies that the measure $m$ of
  Proposition \ref{thm:semileikkaus} is proportional to $\HH^t|_E$,
  where $t = \log 2/\log 3$ as in the previous example.
  Consequently, $m(J) > 0$ whenever
  $J$ is one of the line segments $\fii_\iii([0,1]) \times \{ 0 \}$,
  $\iii \in I^*$. Especially,
  \begin{equation*}
    m(X_\iii \cap X_\jjj) > 0
  \end{equation*}
  for incomparable symbols $\iii$ and $\jjj$ satisfying
  $\iii|_1 \ne \jjj|_1$ and 
  $\sigma(\iii) = \sigma(\jjj)$. We have hereby shown that the measure
  $m$ is not $t$-semiconformal. On the other hand, Lemma
  \ref{thm:technical_lemma} implies that the bounded overlapping
  property is satisfied, noting
  that clearly $X_\iii \cap E = \pi([\iii])$ for each $\iii \in
  I^*$. Therefore, the extra assumption in Proposition
  \ref{thm:semileikkaus} is really needed.

  Furthermore, this CMC is not tractable. This can be deduced from the
  fact that
  \begin{equation*}
    \dist(X_{0\iii},X_{1\iii}) = 0
  \end{equation*}
  but
  \begin{equation*}
    \dist(X_{00\iii},X_{01\iii}) \ge \dist(X_{00},X_{01}) = \tfrac{1}{3}
  \end{equation*}
  for every $\iii \in I^*$.
\end{example}

%\begin{example} \label{ex:nontractable}
%  We shall give an example of a nontractable CMC. Set $I = \{ 0,1 \}$
%  and consider dyadic
%  intervals on $[0,1]$. Denote $X_0 = [0,\tfrac12]$, $X_1 =
%  [\tfrac12,1]$, and if $X_\iii$, where $\iii \in I^n$ and $n
%  \in \N$, has been chosen, denote with $X_{\iii 1}$ and $X_{\iii 0}$
%  ($X_{\iii 0}$ and $X_{\iii 1}$) the leftmost and the rightmost
%  dyadic interval of the next level provided that
%  $X_\iii \subset X_0$ ($X_\iii \subset X_1$). The collection $\{
%  X_\iii : \iii \in I^* \}$ obtained by this method is clearly a
%  CMC. Choosing now $\iii = (1,0,0,\ldots)$, $\jjj = (0,0,0,\ldots)$,
%  and $\hhh=1$, we have
%  \begin{equation*}
%    \dist(X_{\iii|_n},X_{\jjj|_n})=0
%  \end{equation*}
%  for every $n \in \N$, but
%  \begin{equation*}
%    \dist(X_{\hhh\iii|_n},X_{\hhh\jjj|_n}) \ge \tfrac{1}{8}
%  \end{equation*}
%  for every $n \ge 3$.
%\end{example}

\begin{example} \label{ex:suorakaiteet}
  In this example, we define a class of non-semiconformal tractable
  IFS's. Suppose $I$ is a finite set and for each $i \in I$ there is a
  mapping $\fii_i \colon \R^2 \to \R^2$ such that
  \begin{equation*}
    \fii_i(x,y) = (a_ix+c_i,b_iy+d_i),
  \end{equation*}
  where $0 < b_i < a_i < 1$ and $c_i,d_i \ge 0$.
  Defining $a_\iii = a_{i_1}\cdots a_{i_n}$ and $b_\iii =
  b_{i_1}\cdots b_{i_n}$ for each $\iii = (i_1,\ldots,i_n) \in I^n$
  and $n \in \N$, we have
  \begin{align*}
    \sup_{(x_1,y_1) \ne (x_2,y_2)} \frac{|\fii_\iii(x_1,y_1) -
      \fii_\iii(x_2,y_2)|}{|(x_1,y_1) - (x_2,y_2)|} &= a_\iii, \\
    \inf_{(x_1,y_1) \ne (x_2,y_2)} \frac{|\fii_\iii(x_1,y_1) -
      \fii_\iii(x_2,y_2)|}{|(x_1,y_1) - (x_2,y_2)|} &= b_\iii
  \end{align*}
  as $\iii \in I^*$.
  It is clear that $b_\iii/a_\iii \to 0$ as $|\iii| \to \infty$,
  showing that the IFS $\{ \fii_i : i \in I \}$ is not
  semiconformal. However, by choosing $L = 1 + \max_{i \in
    I}\{c_i,d_i\} / (1 - \max_{i \in I}a_i)$ and $X = [0, L]^2$, we
  get $\fii_i(X) \subset X$ for every $i \in I$ and
  \begin{equation*}
    a_\iii L \le \diam\bigl( \fii_\iii(X) \bigr) \le \sqrt{2}a_\iii L
  \end{equation*}
  for each $\iii \in I^*$. The collection $\{ \fii_\iii(X) :
  \iii \in I^* \}$ is thus a CMC and consequently, the IFS $\{ \fii_i
  : i \in I \}$ is tractable.

  According to Corollary \ref{thm:tractable_corollary}, this
  CMC satisfies the (uniform) ball condition if and only if
  $0 < \HH^t(E) < \infty$, where $E$ is the limit set and $\sum_{i \in
    I} a_i^t = 1$.
  For related dimension results, see \cite{McMullen1984},
  \cite{GatzourasLalley1992}, and \cite{FengWang2005}.

  Observe also that choosing, for example, $I = \{ 0,1 \}$, $0<b_0=b_1
  \le a_0=a_1 \le \tfrac12$, $c_1 > 0$, and $d_0 \ge 0 = c_0 = d_1$,
  it is straightforward to see that the ball condition is
  automatically satisfied.
\end{example}

\begin{example} \label{ex:suorakaiteet3}
%  Suppose $I = \{ 0,1 \}$ and there are mappings $\fii_0,\fii_1 \colon \R^2
%  \to \R^2$ such that
%  \begin{align*}
%    \fii_0(x,y) &= (ax,by+d), \\
%    \fii_1(x,y) &= (ax,by),
%  \end{align*}
%  where $0 < b < a < 1$ and $d \ge 0$. Let $L = 1
%  + d/(1-a)$ and $X = [0,L]^2$. As in Example
%  \ref{ex:suorakaiteet}, we notice that the collection $\{ \fii_\iii(X) :
%  \iii \in I^*\}$ is a tractable CMC.
  Recall that by Corollary \ref{thm:classical_result}, a semiconformal
  IFS satisfies the
  OSC if and only if it satisfies the ball condition. In this
  example, we show that for a tractable IFS this equivalence is not
  necessarily true.

  In Example \ref{ex:suorakaiteet}, let us choose $I = \{ 0,1 \}$,
  $0<b_0=b_1<a_0=a_1 \le \tfrac12$, $d_0>0$, and $c_0=c_1=d_1=0$. It
  is clear that this tractable IFS satisfies the OSC. It can be seen
  by a straightforward calculation that the uniform finite clustering
  property fails, implying that the uniform ball condition does not
  hold. Alternatively, it follows from the observations done in
  Example \ref{ex:suorakaiteet} that the invariant set $E$ has
  Hausdorff dimension $-\log 2/\log a_0$ provided that the ball
  condition is satisfied. However, since $E$ is clearly a subset of
  $\{ 0 \} \times \R$ having Hausdorff dimension $-\log 2/\log b_0$,
  this cannot be the case.

  We do not know if there exists a tractable IFS satisfying the ball
  condition but not the OSC.
%  We show that the uniform ball condition does not hold in this
%  setting. Recalling Theorem \ref{thm:bounded_separation}, it suffices
%  to state that for each $N \in \N$ there is $x \in E$ and $r > 0$
%  such that $\# Z(x,r) \ge N$. Denote $a=a_0=a_1$, $b=b_0=b_1$, and
%  let $N \in \N$, $x \in E$, and $\jjj
%  \in I^\infty$ such that $\pi(\jjj) = x$. Take $n \ge (\log b/\log a
%  - 1)^{-1} \log 2N/\log 2$, $n\log b/\log a - 1 \le m < n\log b/\log
%  a$, and choose $a^mL \le r < a^{m-1}L$, where $L$ is as in Example
%  \ref{ex:suorakaiteet}. Notice that if $\iii \in
%  Z(r)$, we have $a^{|\iii|}L \le \diam\bigl( \fii_\iii(X) \bigr) < r <
%  a^{m-1}L$, giving $|\iii| \ge m$. Since $r \ge a^mL > b^nL$, it
%  holds, recalling the choice of the mappings, that $B(x,r) \cap
%  \fii_\iii(X) \ne \emptyset$ whenever $\iii \in I^m \cap [\jjj|_n]$. Now
%  \begin{equation*}
%    \# Z(x,r) \ge \# \{ \iii \in I^m : \iii \in [\jjj|_n] \}
%    = 2^{m-n} \ge N
%  \end{equation*}
%  giving the claim.
%
%  We conclude that in this case $P(t)=0$ implies $\HH^t(E) = 0$.
%  It is also worthwhile to notice that the IFS $\{ \fii_i : i \in I \}$
%  satisfies the OSC provided that $0 < b \le \tfrac12$ and $d > 0$.
%  To see this, recall the calculation in Example
%  \ref{ex:suorakaiteet2} and consult Theorem
%  \ref{thm:IFS_separation_condition}.
\end{example}

\begin{example} \label{ex:lohhari}
  At first glance, it seems that for a semiconformal IFS the OSC
  (especially via Lemma \ref{thm:classical_OSC}) is easier to check
  than the ball condition. However, there are cases when it is much
  more convenient to consider the ball condition rather than the OSC. In
  this example, we consider a familiar self-similar set having this
  property.

  We identify $\R^2$ and $\C$ for notational simplicity
  and we set $\eta = \tfrac 12 + \tfrac{i}{2}$. Let $I =
  \{ 0,1 \}$ and let $\fii_0,\fii_1$ be the similitudes 
  given by the equations
  \begin{align*}
    \fii_0(z) &= \eta z, \\
    \fii_1(z) &= \overline{\eta} z + \eta,
  \end{align*}
  where $z \in \C$ and $\overline{\eta} = \tfrac12 - \tfrac{i}{2}$ is
  the complex conjugate of $\eta$. Notice that the
  contraction ratio of both mappings is $\tfrac{1}{\sqrt{2}}$.
  The invariant set $E$ of the IFS $\{ \fii_0,\fii_1 \}$ is the well
  known \emph{L\'evy's dragon}, see \cite{Levy1938}. Since
  $\HH^2(E)>0$, it follows from \cite[Theorem 2.1]{Schief1994} that
  the OSC is satisfied and hence by \cite[Corollary 2.3 and
  its proof]{Schief1994} and Lemma \ref{thm:classical_OSC}, we
  conclude that a nonempty 
  open set $U$ is feasible only if $U \subset E$. Because of the
  intricate structure of the L\'evy's dragon, such an open
  set is, a priori, virtually impossible to find. However, it is
  straightforward to find an open set $W \subset \C$ satisfying 
  \begin{equation*}
    \text{$\fii_\iii(W) \cap \fii_\jjj(W) = \emptyset$ whenever
      $|\iii| = |\jjj|$ and $\iii \ne \jjj$}
  \end{equation*}
  from which the ball condition follows for any corresponding
  semiconformal CMC. This can be done by choosing
  $W$ to be the interior of the right-angled triangle $\triangle =
  \conv\{ 0,1,\eta \}$ and
  looking at the images $\fii_\iii(W)$ as $\iii \in I^n$ with
  fixed $n$. The calculations for the disjointness of the images are
  straightforward since the vertices $\fii_\iii(0)$ and
  $\fii_\iii(1)$ of $\fii_\iii(\triangle)$ belong
  to the point grid $H_n = \{ \eta^n(k+il) : k,l \in \Z \}$ whereas
  $\fii_\iii(\eta) \in H_{n+1}$, and $\fii_\iii(\triangle) =
  \fii_\jjj(\triangle)$ only if $\fii_\iii(0) = \fii_\jjj(0)$ and
  $\fii_\iii(1) = \fii_\jjj(1)$. See
  \cite[p.\ 222]{Edgar1993} for an illustration and apply the
  calculations done in the appendix of \cite{Kigami1995}.
  Notice that the set $W$ above is not feasible since $W \not\subset
  E$.
\end{example}

\begin{example} \label{ex:conformal_IFS}
  In this example, we note that any conformal IFS is semiconformal.
  Suppose $I$ is a finite set and for each $i \in I$ there is a
  contractive $C^{1+\eps}$ conformal mapping $\fii_i \colon \Omega \to
  \Omega$ defined on an open set $\Omega \subset \R^d$. Assuming there
  exists a closed and nonempty $X \subset \Omega$ satisfying
  \begin{equation*}
    \bigcup_{i \in I} \fii_i(X) \subset X,
  \end{equation*}
  the collection $\{ \fii_i : i \in I \}$ is an IFS and we call it a
  \emph{conformal IFS}. We deduce from the well known bounded
  distortion principle that each conformal IFS is semiconformal. See,
  for example, \cite[Remark 2.3]{MauldinUrbanski1996}. Observe that
  the converse does not necessarily hold. For example, the
  semiconformal IFS constructed in \cite[Example 2.1]{Kaenmaki2006} is
  not conformal.
\end{example}

\begin{example} \label{ex:devils_stairs}
  Observe that any IFS conjugated in a bi-Lipschitz way to a conformal
  IFS is semiconformal. Although the bi-Lipschitz conjugacy preserves
  positivity and finiteness of the Hausdorff measure, the following
  example is of special interest as it emphasizes the fact that the use of
  differentiable mappings is not a necessity in order to prove
  Corollary \ref{thm:classical_result}.

  We set $D' \subset [0,1]^2$ to be the graph of a nondecreasing
  continuous function $F \colon [0,1] \to [0,1]$ satisfying $F(0)=0$
  and $F(1)=1$. A well known nondifferentiable example of this kind of
  function is $x \mapsto \HH^t|_E([0,x])$, where $E$ is the
  standard $\tfrac13$-Cantor set and $t = \log 2/\log 3$. In this
  case, the set $D'$ is known as \emph{Devil's stairs}. We set $D = D'
  \cup \{ (x,x) : |x| > 1 \}$, $L = \{ (x,x) : x \in \R \}$, and
  $\proj_L$ to be the orthogonal projection onto $L$. Now the
  mapping $f = (\proj_L|_D)^{-1} \colon L \to D$ is clearly
  bi-Lipschitz and defining a mapping $g \colon \R^2 \to \R^2$ by
  setting $g(x) = x + f\bigl( \proj_L(x) \bigr) - \proj_L(x)$ for each
  $x \in \R^2$, the reader can easily see that also $g$ is bi-Lipschitz.

  Since the line segment $L \cap [0,1]^2$ is clearly the invariant set
  of the similitude IFS $\{ \fii_i : i \in I \}$, where $\fii_i(x,y) =
  (\tfrac{1}{N} x + \tfrac{i-1}{N},\tfrac{1}{N} y +
  \tfrac{i-1}{N})$ and $I = \{ 1,\ldots,N \}$, the set $D' =
  g(L \cap [0,1]^2)$ is the invariant set
  of a semiconformal IFS $\{ g \circ \fii_i \circ g^{-1} : i \in I
  \}$. Here $N \in \N$ is chosen so large that the mappings $g \circ
  \fii_i \circ g^{-1}$ are contractions.

  Devil's stairs also provides the reader with an example of a
  semiconformal IFS which is not conformal, see \cite[Theorem
  2.1]{Kaenmaki2003}.
\end{example}

\begin{example}
  Defining for $A \subset \R^d$, $x \in \R^d$, and $r > 0$
  \begin{align*}
    \por(A,x,r) = \sup\{ \roo \ge 0 : \; &\text{there is } z \in \R^d
    \text{ such that} \\
    &B(z,\roo r) \subset B(x,r) \setminus A \},
  \end{align*}
  we say that a bounded set $A \subset \R^d$ is \emph{uniformly
    porous} if there are $\roo > 0$ and $r_0 > 0$ such that
  $\por(A,x,r) \ge \roo$ for all $x \in A$ and $0<r<r_0$. The notion
  of porosity has arisen from the study of dimensional estimates
  related to the boundary behavior of various mappings.

  Following the proof of \cite[Theorem 4.1]{KaenmakiSuomala2004}, we
  notice that a uniformly porous set is contained in a limit set of a
  semiconformal CMC satisfying the uniform ball condition such that $\dimm(E)
  \le d-c\roo^d$, see Theorem \ref{thm:regular}.
\end{example}

\bibliographystyle{abbrv}
\bibliography{moran.bib}

\end{document}